\documentclass[10pt]{article}
\usepackage{amsmath,amssymb,amsthm,amscd,esint}
\usepackage{appendix}

\pagestyle{plain}
\setlength{\oddsidemargin}{12pt}
\setlength{\evensidemargin}{12pt}
\setlength{\topmargin}{0pt}
\setlength{\textwidth}{15cm}
\setlength{\textheight}{21.5cm}
\setlength{\parindent}{0.5cm}
\setlength{\parskip}{1ex plus 0.5ex minus0.5ex}

\newtheorem{theorem}{Theorem}[section]
\newtheorem{lemma}[theorem]{Lemma}

\newtheorem{corollary}[theorem]{Corollary}
\newtheorem{proposition}[theorem]{Proposition}

\theoremstyle{definition}
\newtheorem{definition}[theorem]{Definition}
\newtheorem{example}[theorem]{Example}

\theoremstyle{remark}
\newtheorem{remark}[theorem]{Remark}

\numberwithin{equation}{section}
\numberwithin{theorem}{section}

\DeclareMathOperator{\vol}{vol}

\DeclareMathOperator{\tr}{tr} 
\DeclareMathOperator{\diam}{diam}

\DeclareMathOperator{\Psh}{Psh}
\DeclareMathOperator{\Ca}{Cap}

\begin{document}

\title{Complex Monge-Amp\`ere equation in Orlicz space and Diameter Bound}

\author{Lei Zhang\thanks{Supported by Postdoctoral Fellowship Program GZC20240867. Email: leizhang92@mail.tsinghua.edu.cn}\\
Yau Mathematical Sciences Center, Tsinghua University\\
Zhenlei Zhang\thanks{Supported partially by NSFC 11431009 and NSFC 11771301. Email: zhleigo@aliyun.com}\\
School of Mathematical Sciences, Capital Normal University}
\date{}

\maketitle



\begin{abstract}
In this paper, we establish diameter bounds for compact K\"ahler manifolds equipped with K\"ahler metrics $\omega$, assuming the associated measure lies in a specific Orlicz space and satisfies an integrability condition. Firstly, we prove a priori estimates for solutions of the complex Monge-Amp\`ere equation in Orlicz spaces, encompassing $L^{\infty}$ and stability estimates. This is achieved by employing Ko{\l}odziej's approach \cite{Ko98} and the argument of Guo-Phong-Tong-Wang \cite{GuPhToWa21}, respectively. Secondly, building on the work of Guo-Phong-Song-Sturm \cite{GuPhSoSt24-1}, we derive the uniform (local/global) estimates of the Green's function and its gradient for the associated K\"ahler metric $\omega$.
\end{abstract}

\tableofcontents



\section{Introduction}
In a celetrated article \cite{Ya78}, S. T. Yau solved the Calabi conjecture by studying complex Monge-Amp\`ere equations (CMA for short) on a compact K\"ahler manifold. The $L^{\infty}$-estimates are of particular importance and played a prominent role in Yau's proof on the Calabi conjecture. Since then, the CMA has attracted much attentions in complex geometry. The study of the CMA on compact K\"ahler manifolds has a long history and many spectacular results have appeared over the years. For a comprehensive overview of these developments, readers are referred to the surveys by Phong-Song-Sturm \cite{PhSoSt12} and Guo-Phong \cite{GuPh23}.

Major developments in the theory of CMA occurred in the last decades. The pioneering work of E. Bedford and B. A. Taylor \cite{BeTa76,BeTa82} has been deeply analysed by Ko{\l}odziej \cite{Ko98,Ko03,Ko05} which gives a pluripotential approach proof of $L^{\infty}$-estimates for CMA when the right hand is in $L^p$ for $p>1.$ This improvement in the range of $p$ is not merely technical but bears profound geometric significance. The failure of $L^{\infty}$-estimates at $p=1$ underscores the necessity of stability conditions in the K\"ahler-Ricci flow \cite{PhSoSt07}, whereas the regime $1 < p \leq n$ is essential for a broad spectrum of geometric problems. Actually, an extension of Ko{\l}odziej's methods to the more general case of degenerating background metrics is necessary. This framework was independently revisited by Demailly-Pail \cite{DePa10} and Eyssidieux-Guedj-Zeriahi \cite{EyGuZe09} in the context of singular canonical metrics. It was subsequently extended to general big cohomology classes by Boucksom-Eyssidieux-Guedj-Zeriahi \cite{BEGZ10}, and later to the nef classes by Fu-Guo-Song \cite{FGS20}. Utilizing the Alexandrov-Bakelman-Pucci (ABP) maximum principle, Blocki established an alternative framework \cite{Blo05,Blo11} in which $L^{\infty}$-estimates for CMA when the right hand is in $L^p$ for $p>2.$ His methods have proven remarkably powerful and have since been successfully applied to numerous problems, such as subsolutions, equations with gradient terms, and the constant scalar curvature problem.

Very recently, significant advancements have been made in proving Ko{\l}odziej's estimates through alternative methodologies. In \cite{WaWaZh21}, Wang-Wang-Zhou obtained an elegant PDE proof of sharp $L^{\infty}$ estimates for the CMA for domains in $\mathbb{C}^n$. A purely PDE-based approach for establishing sharp $L^{\infty}$ estimates for the CMA on compact K\"ahler manifold was developed by Guo-Phong-Tong \cite{GuPhTo23-2}, extending their earlier work to a broader class of fully non-linear equations \cite{GuPh24} while simultaneously accommodating degenerations in the background metric \cite{GuPhTo23-1,GuPhToWa24}. The PDE approach has seen substantial development in subsequent works, including further studies by Guo-Phong-Tong \cite{GuPhTo23-2,GuPhTo23-1}, their collaborative work with Wang \cite{GuPhToWa24,GuPhToWa21}, and their joint research with Sturm \cite{GuPhSt24} and Song \cite{GuPhSoSt24-1,GuPhSoSt23,GuPhSoSt24-2}. These developments parallel broader applications of PDE estimates across various geometrical contexts. These methods are especially well-suited to geometric applications and have led to promising results concerning diameter, volume and Green's function estimates \cite{GuPhSt24,GuPhToWa21,GuPhSoSt23,GuPhSoSt24-1,GuPhSoSt24-2,GuSo22,Vu24+}. This PDE's method provides an alternative to the traditional pluripotential theory techniques. Concurrently, an approach utilizing quasi-psh envelopes was established by Guedj-Lu-Zeriahi \cite{GuLuZe19} and further refined by Guedj-Lu \cite{GuLu21,GuLu22,GuLu23}. We note that similar estimates have also been obtained through pluripotential methods by Guedj-Guenancia-Zeriahi \cite{GuGuZe25} and through complex Sobolev space by Dinh-Ko{\l}odziej-Nguyen \cite{DiKoNg21} and Vu \cite{Vu24+}.

The stability estimates for CMA provide good control of the potentials of those metrics close to the singularity set. Informally, the stability estimate shows that $L^1$-closeness of right-hand sides in a fixed Orlicz space implies $L^\infty$-close of the potentials. The foundational work for the CMA was done by Ko{\l}odziej \cite{Ko03}. This was generalized to semipositive and big background metrics by Dinew-Zhang \cite{DiZh10} (see also \cite{GuKoZe08,GuZe12}). Based on the stability estimates for CMA, Ko{\l}odziej \cite{Ko08}, in particular proved the potentials for CMA (when the right hand is in $L^q$ for $q>1$) are H\"older continuous. Later, in \cite{DDGHKZ}, the authors combine the methods of Ko{\l}odziej and Demailly's regularization theorem \cite{De92,De94} to obtain the H\"older continuous with exponent $\alpha$ arbitrarily close to $\frac{2}{1+nq}$, where $q$ is the conjugate exponent of $p$. For the Dirichlet problem of the CMA, we refer to \cite{BeTa76,GuKoZe08,Ng18,Ng20}; for characterization of Monge-Amp\`ere measures with H\"older continuous potentials, see \cite{DiNgSi10,DiNg14}.

However, there exist counterexamples demonstrating that H\"older continuity can fail when the right-hand side is not in $L^p$ for any $p>1$ (see \cite[Example 1]{GuPhToWa21}). Notably, Li's recent work \cite{Li21} established a fundamental connection between the modulus of continuity of potentials and diameter bounds, with further advancements provided in \cite{GuPhToWa21,GuGuZe25}. For the CMA, this modulus is critically important as it directly governs key geometric properties of the associated K\"ahler metric, including its diameter \cite{FGS20,Li21,GuPhToWa21,GuPhSoSt24-1,GuPhSoSt23,GuGuZe25,Vu24+} and volume \cite{GuSo22, GuPhSoSt24-1,GuPhSoSt23}. Crucially, uniform bounds on the modulus are necessary to establish uniform diameter bounds, which are in turn essential for controlling Gromov-Hausdorff convergence. These results have found numerous applications, notably in the analytic minimal model program via the K\"ahler-Ricci flow \cite{TiZh06,SoTi07,SoTi12,SoTi17} and the continuity method \cite{LaTi16,LaTiZh17,FGS20,Zha24}, as well as in the study of degenerating Calabi-Yau metrics \cite{To09,To10,RoZh11}.

In this paper, we will establish a priori estimates which including $L^{\infty}$ and stability estimates for solutions to the CMA (\ref{MA: smooth case}) in Orlicz spaces, by employing Ko{\l}odziej's original approach \cite{Ko98} and the argument of Guo-Phong-Tong-Wang \cite{GuPhToWa21}, respectively. Building on these analytical results and Guo-Phong-Song-Sturm's discussion \cite{GuPhSoSt24-1}, we further derive geometric insights into the associated K\"ahler metrics, such as their diameter and volume.

Let's start with the general set-up. We adopt the terminology of \cite{GuPhSoSt24-1}. Let $X^n$ be a compact K\"ahler manifold of dimension $n\ge 2$ and $\omega_0$ be a fixed K\"ahler metric. Let $\theta$ be a $d$-closed smooth real (1,1)-form in a big class $[\alpha]$ such that
\begin{equation}\label{harmonic form bound}
\theta\le A\omega_0
\end{equation}
for some constant $A>0$. Consider the following CMA
\begin{equation}\label{MA: smooth case}
\frac{\left(\theta+\sqrt{-1}\partial\bar{\partial}\varphi_0\right)^n}{[\alpha^n]}=F\cdot\frac{\omega_0^n}{[\omega_0^n]},~~~\sup\varphi_0=0,
\end{equation}
where $F$ is a positive measurable function in an Orlicz space $L^\Phi$ for some $N$-function $\Phi$. The measure also satisfies a compatibility condition
\begin{equation}\label{prob measure}
\fint_X F\omega_0^n=1,
\end{equation}
where $\fint$ denotes the average integration w.r.t. the measure $\omega_0^n$.

See Section \ref{Orlicz-space} for a brief introduction to the Orlicz space. For any $f\in L^\Phi(\omega_0)$ we denote by
\begin{equation}
\|f\|_{L^\Phi}=\inf \left\{b>0\,\bigg|\,\frac{1}{[\omega_0^n]}\cdot\int_X\Phi(|f|/b)\omega_0^n\le 1\right\}
\end{equation}
the Luxemburg norm of $f$ with respect to the probability measure $\frac{\omega_0^n}{[\omega_0^n]}$.

\subsection{A priori estimates for CMA in Orlicz space}

The a priori analytical estimates of CMA in Orlicz space depend on integrability conditions of $\Phi$. Recall the "convex conjugate" of an $N$-function $\Phi$ (which it is called in \cite{KrRu} the {\it complementary} function of $\Phi$),
\begin{equation}
\Phi^*(s)=\sup_{t\ge 0}(ts-\Phi(t)),\hspace{0.3cm}\forall ~s\ge 0.
\end{equation}
Let us introduce another function associated to $\Phi$,
\begin{equation}\label{N function-4}
h(t)=h_\Phi(t)=(\Phi^*)^{-1}(e^t).
\end{equation}
Now we introduce the first integrability condition on $\Phi$, namely
\begin{equation}\label{integrability 0}
\mathbf{I}=\int_0^\infty\frac{dt}{h^{1/n}(t)}<\infty.
\end{equation}
The condition, which is called Condition (A) in \cite[Chapter 2]{Ko98}, is crucial for the $L^\infty$ estimate of CMA (\ref{MA: smooth case}) in Orlicz spaces. We will take Ko{\l}odziej iteration argument for capacity \cite{Ko98} to prove the following $L^\infty$-estimate. Related discussions can be found in \cite[Theorem 2.1]{EyGuZe09}, \cite[Theorem 4.1]{BEGZ10}, and \cite[Proposition 1.1]{FGS20}. One may also use the auxilliary function of \cite{GuPhTo23-2} to give another proof (see also \cite{Liu24+,Qiao24+,ZwZy25+}).

\begin{theorem}\label{infinity estimate: 2}
Let $\Phi$ be an $N$-function satisfying {\rm(\ref{integrability 0})}. If $F\in L^\Phi$ and its Luxemburg norm
\begin{equation}\label{Orlicz measure}
\|F\|_{L^\Phi(\omega_0)}\le K,
\end{equation}
then, the solution $\varphi_0$ of CMA {\rm(\ref{MA: smooth case})} admits an a priori $L^\infty$-estimate
\begin{equation}
\|\varphi_0-\mathcal{V}_\theta\|\le C(\omega_0,\Phi;A,K,n)+C(\omega_0,A)\cdot\|F\|_{L^\Phi(\omega_0)}^{1/n}\cdot\mathbf{I},
\end{equation}
where $$\mathcal{V}_\theta(x)=\sup\{\psi(x)\,|\,\psi\in\Psh(\theta),\,\sup\psi=0\}.$$
\end{theorem}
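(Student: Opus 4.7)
The plan is to follow Kołodziej's capacity iteration from \cite{Ko98}, with the single change that $L^p$--H\"older is replaced by Orlicz--H\"older against the complementary pair $(\Phi,\Phi^*)$. Write $u=\varphi_0-\mathcal{V}_\theta$, which is $\theta$-psh with $\sup u$ controlled by a standard pluripotential compactness argument (in terms of $A$ and $\omega_0$). Introduce the decreasing function
\[
g(s)\;=\;\Ca\bigl(\{u<-s\}\bigr)^{1/n},\qquad s\ge 0,
\]
with $\Ca$ the Monge--Amp\`ere capacity associated with $\theta$. The goal is to show that $g$ vanishes at some finite $s_\infty\le C(\omega_0,\Phi;A,K,n)+C(\omega_0,A)\,\|F\|_{L^\Phi}^{1/n}\,\mathbf{I}$; this kills the sublevel set $\{u<-s_\infty\}$ and delivers the claimed uniform bound for $u$. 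The driving pointwise inequality is Kołodziej's capacity--Monge--Amp\`ere comparison: for $s\ge 0$ and $t\in(0,1)$,
\[
t^n\,g(s+t)^n \;\le\; c(A,\omega_0)\int_{\{u<-s\}} F\,\omega_0^n .
\]

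The right-hand side is estimated by H\"older's inequality in Orlicz spaces. A direct computation from the Luxemburg definition gives $\|\chi_E\|_{L^{\Phi^*}}=1/(\Phi^*)^{-1}(1/\mu(E))$ for the probability measure $\mu=\omega_0^n/[\omega_0^n]$, hence
\[
\int_E F\,\omega_0^n \;\le\; \frac{2K[\omega_0^n]}{(\Phi^*)^{-1}(1/\mu(E))}.
\]
The last ingredient is the Alexander--Taylor/Kołodziej exponential volume--capacity inequality, $\mu(E)\le C\exp\bigl(-\alpha/\Ca(E)^{1/n}\bigr)$ for Borel $E\subset X$, which gives $-\log\mu(E)\ge \alpha/g(s)-O(1)$. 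Together with the definition $h(\tau)=(\Phi^*)^{-1}(e^\tau)$ this yields $(\Phi^*)^{-1}(1/\mu(E))\ge h\bigl(\alpha/g(s)-O(1)\bigr)$, and combining the three displays produces the functional inequality
\[
t\cdot g(s+t) \;\le\; \frac{C\,\|F\|_{L^\Phi}^{1/n}}{h^{1/n}\bigl(\alpha/g(s)\bigr)},\qquad s\ge 0,\ t\in(0,1).
\]

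To iterate, fix $s_0$ with $g(s_0)\le \tfrac12$ (supplied by a crude $L^1$ bound on $u$ and Chebyshev) and choose $s_{k+1}=s_k+t_k$ so that $g(s_{k+1})\le g(s_k)/2$. Setting $a_k=g(s_k)\le a_0\cdot 2^{-k}$, the functional inequality forces
\[
t_k\;\le\;\frac{2C\,\|F\|_{L^\Phi}^{1/n}\cdot 2^k}{a_0\,h^{1/n}\bigl(\alpha\cdot 2^k/a_0\bigr)}.
\]
An integral-test comparison (via the substitution $y=\alpha\cdot 2^x/a_0$) then shows that summability of $\{t_k\}$ is \emph{equivalent} to
\[
\int^{\infty}\frac{dy}{h^{1/n}(y)}\;<\;\infty,
\]
i.e.\ the hypothesis $\mathbf I<\infty$, and moreover yields $\sum_k t_k\le C(\omega_0,A)\,\|F\|_{L^\Phi}^{1/n}\,\mathbf I$. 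Consequently $s_\infty:=s_0+\sum_k t_k<\infty$, $g(s_\infty)=0$, and the stated uniform bound for $\varphi_0-\mathcal{V}_\theta$ follows. The main obstacle I expect is the verification of the exponential volume--capacity inequality in the present degenerating setting, where $\theta$ is only a smooth $d$-closed big $(1,1)$-form dominated by $A\omega_0$ rather than a K\"ahler metric; one must also track uniformity of $\alpha$ and $C$ in $A$ and $[\alpha^n]$. Once these geometric inputs are secured, the combinatorial iteration matches the series to the integral defining $\mathbf I$ exactly and gives the linear-in-$\mathbf I$ dependence claimed.
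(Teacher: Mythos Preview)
Your proposal is correct and follows essentially the same Ko{\l}odziej capacity iteration as the paper's proof. The exponential volume--capacity inequality you flag as the main obstacle is precisely what the paper establishes in Lemma~\ref{measure estimate: 0} via the extremal function $\mathcal{V}_{\theta,E}$ and Tian's $\alpha$-invariant, thereby packaging your separate Orlicz--H\"older and volume--capacity steps into a single measure-domination estimate; with that input secured, your iteration and the paper's coincide.
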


We remark that the estimate is a formal result in general Orlicz space. Several specific examples are remarkable. An application of Theorem \ref{infinity estimate: 2} to $\Phi(t) = t^p/p$ (with $p > 1$) yields a general estimate, from which we directly recover Ko{\l}odziej's result for K\"ahler classes \cite[Theorem 2.4.2]{Ko98}, the estimate by Boucksom-Eyssidieux-Guedj-Zeriahi for big cohomology classes \cite[Theorem 4.1]{BEGZ10}, and the estimate for nef classes by Fu-Guo-Song \cite[Proposition 1.1]{FGS20} (where we choose $\theta=\chi+t\omega_0$ for some smooth closed $(1,1)$-form $\chi$ in nef class). Applying it to $\Phi(t)=t\cdot\log(1+t)^p$ for $p>n$, we establish the $L^{\infty}$-estimates of Guo-Phong-Tong \cite[Theorem 1]{GuPhTo23-2}. See the discussion in Section \ref{example section}.

The stability estimates of CMA relates to a second integrability condition
\begin{equation}\label{integrability 00}
\int_{h(\tau_0)}^\infty\frac{dt}{t^{1/n}\cdot (h^*)^{-1}(t)}<\infty,
\end{equation}
where $h^*$ is the complementary of $h$ and $\tau_0$ is any positive number. The stability estimates below are obtained by adapting the method of Guo-Phong-Tong-Wang \cite{GuPhToWa21}. Further details are provided in Section \ref{proof of stability of cma}.

\begin{theorem}\label{Stability theorem 1}
Suppose that the cohomology class $[\theta]$ is nef and big. Let $\varphi_0$ be a solution  to CMA {\rm(\ref{MA: smooth case})}. Assume further that $\Phi$ satisfies the integrability condition {\rm(\ref{integrability 00})}. Then, for any $\psi_\delta\in\Psh(\theta)$ satisfying
\begin{equation}
\sup\psi_\delta= 0
\end{equation}
and
\begin{equation}
\fint_X(\psi_\delta-\varphi_0)_+\omega_0^n\le \delta,
\end{equation}
we have
\begin{equation}\label{stability estimate}
\psi_\delta\le\varphi_0+C\cdot \hbar(\delta).
\end{equation}
for some $C=C(\omega_0,\Phi;A,n,K)$, where $f_+=\max\{f,0\}$, $\hbar(\delta)$ is defined in Definition {\rm\ref{Definition-hbar}}.
\end{theorem}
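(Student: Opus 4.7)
The plan is to adapt the auxiliary complex Monge--Amp\`ere scheme of Guo--Phong--Tong--Wang \cite{GuPhToWa21} to the Orlicz setting: for each $s>0$ I introduce an auxiliary CMA whose right-hand side is concentrated on the super-level set $D(s):=\{\psi_\delta-\varphi_0>s\}$, apply the $L^{\infty}$-estimate of Theorem \ref{infinity estimate: 2} to it, and combine this with a comparison-principle argument to deduce a functional inequality controlling how fast $D(s)$ must shrink in $s$. Iterating this inequality against the integral hypothesis $\fint_X(\psi_\delta-\varphi_0)_+\omega_0^n\le\delta$ then yields the modulus $\hbar(\delta)$.

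Concretely, set $A(s):=\fint_{D(s)}F\,\omega_0^n$ and, whenever $A(s)>0$, solve for $\rho_s\in\Psh(\theta)\cap L^\infty$ the auxiliary CMA
\begin{equation*}
\frac{(\theta+\sqrt{-1}\partial\bar\partial\rho_s)^n}{[\alpha^n]}=A(s)^{-1}\mathbf{1}_{D(s)}F\cdot\frac{\omega_0^n}{[\omega_0^n]},\qquad \sup\rho_s=0,
\end{equation*}
whose right-hand side is a probability measure by construction. The key analytic task is to bound $\|A(s)^{-1}\mathbf{1}_{D(s)}F\|_{L^\Phi(\omega_0)}$ in terms of $\mu(D(s))$, where $\mu=\omega_0^n/[\omega_0^n]$, using only $K=\|F\|_{L^\Phi}$. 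Unwinding the definition of the Luxemburg norm, applying Young's inequality $st\le\Phi(t)+\Phi^*(s)$, and using the relation $h=(\Phi^*)^{-1}\circ\exp$, one arrives at a bound whose behaviour as $\mu(D(s))\to 0$ is controlled by $(h^*)^{-1}$. Feeding the resulting estimate into Theorem \ref{infinity estimate: 2} produces
\begin{equation*}
\|\rho_s-\mathcal{V}_\theta\|_{L^\infty}\le C_1+C_2\cdot\Lambda\bigl(\mu(D(s))\bigr),
\end{equation*}
with $\Lambda$ assembled from $\mathbf{I}$ and $(h^*)^{-1}$.

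With $\rho_s$ in hand I apply a pluripotential comparison: for a small $\epsilon\in(0,1)$, test $\psi_\delta$ against $(1-\epsilon)\varphi_0+\epsilon\rho_s+s$ on $D(s)$. Concavity of the Monge--Amp\`ere operator along convex combinations of $\theta$-psh functions, together with the identity $(\theta+\sqrt{-1}\partial\bar\partial\rho_s)^n=A(s)^{-1}(\theta+\sqrt{-1}\partial\bar\partial\varphi_0)^n$ on $D(s)$, yields, after optimising in $\epsilon$, a functional inequality of the form
\begin{equation*}
s\le C\cdot\Theta\bigl(\mu(D(s))\bigr),
\end{equation*}
where $\Theta$ is built from $\Lambda$ and the ratio $A(s)/\mu(D(s))$. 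The layer-cake formula turns the hypothesis $\fint_X(\psi_\delta-\varphi_0)_+\omega_0^n\le\delta$ into $\int_0^\infty\mu(D(s))\,ds\le\delta$, so iterating the functional inequality against this integral bound determines the modulus $\hbar(\delta)$.

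The principal obstacle I anticipate is the quantitative control of $\|A(s)^{-1}\mathbf{1}_{D(s)}F\|_{L^\Phi}$ in terms of $\mu(D(s))$: the dependence must be precise enough that, when combined with Theorem \ref{infinity estimate: 2} and the comparison step, the resulting iteration is governed by exactly the integrand $t^{-1/n}\bigl((h^*)^{-1}(t)\bigr)^{-1}$ appearing in (\ref{integrability 00}). The interplay between the complementary $N$-functions $\Phi$ and $\Phi^*$, and between the auxiliary densities $h$ and $h^*$, makes this computation delicate; integrability condition (\ref{integrability 00}) is tailored precisely so that this iteration converges to a finite modulus $\hbar(\delta)\to 0$ as $\delta\to 0$, generalising the H\"older-type estimate used when $F\in L^p$.
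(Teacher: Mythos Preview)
Your proposal has a genuine gap at the comparison step. You want to ``test $\psi_\delta$ against $(1-\epsilon)\varphi_0+\epsilon\rho_s+s$'' via concavity of the Monge--Amp\`ere operator, but $\psi_\delta$ is only assumed to lie in $\Psh(\theta)$: no equation is prescribed for $(\theta+\sqrt{-1}\partial\bar\partial\psi_\delta)^n$. The Bedford--Taylor comparison inequality on $E=\{w<\psi_\delta\}$ with $w=(1-\epsilon)\varphi_0+\epsilon\rho_s+s$ reads
\[
\int_E(\theta+\sqrt{-1}\partial\bar\partial\psi_\delta)^n\le\int_E(\theta+\sqrt{-1}\partial\bar\partial w)^n,
\]
and your observation $(\theta+\sqrt{-1}\partial\bar\partial\rho_s)^n=A(s)^{-1}(\theta+\sqrt{-1}\partial\bar\partial\varphi_0)^n$ on $D(s)$ only \emph{lower}-bounds the right-hand side. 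Neither side yields a useful upper bound that would force $E=\emptyset$, so no inequality of the form $s\le C\cdot\Theta(\mu(D(s)))$ follows. This is why Ko{\l}odziej-type stability arguments in \cite{Ko03,DiZh10} compare two \emph{solutions} of CMA; here one of the competitors is an arbitrary $\theta$-psh function. Your ``principal obstacle'' --- controlling $\|A(s)^{-1}\mathbf{1}_{D(s)}F\|_{L^\Phi}$ and reading off the integrand $t^{-1/n}\bigl((h^*)^{-1}(t)\bigr)^{-1}$ from Theorem~\ref{infinity estimate: 2} --- is a separate difficulty, and you correctly flag that you have not resolved it.

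The paper's route is essentially different. The auxiliary CMA for $\phi_s$ has right-hand side weighted by $(\psi_\delta-\varphi_0-c-s)_+F/B_s$ rather than $\mathbf{1}_{D(s)}F/A(s)$, and Theorem~\ref{infinity estimate: 2} is \emph{not} applied to $\phi_s$. Instead one forms the scalar test function
\[
H=(\varphi_0-\phi_s)-\tfrac{1}{2(n+1)}B_s^{-1/n}\bigl(\psi_\delta-\varphi_0-c-s\bigr)^{(n+1)/n}
\]
and computes $\Delta_\omega H$ using only $\Delta_\omega(\psi_\delta-\varphi_0)\ge -n$, which needs nothing more than $\psi_\delta\in\Psh(\theta)$; the ordinary maximum principle then gives a pointwise bound on $(\psi_\delta-\varphi_0-c-s)^{(n+1)/n}$ in terms of $-\phi_s$. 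Exponentiating via the $\alpha$-invariant yields a Moser--Trudinger inequality for $\xi(s)=B_s^{-1/n}(\psi_\delta-\varphi_0-c-s)^{(n+1)/n}$, and Young's inequality applied with the pair $(h,h^*)$ --- not $(\Phi,\Phi^*)$ --- produces the iteration
\[
r\,\varrho(s+r)\le C\cdot\frac{\varrho(s)^{1/n}}{(h^*)^{-1}\bigl(4K/\varrho(s)\bigr)}.
\]
Thus the appearance of $(h^*)^{-1}$, and with it the integrability condition~(\ref{integrability 00}), comes from this Moser--Trudinger/Young step acting on $h$, not from an $L^\Phi$ norm bound on an auxiliary density.
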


We remark that the formal estimate generalizes Ko{\l}odziej's stability as well as the continuity of solutions to CMA. For CMA with right hand side in $L^p$ (or $\Phi(t)=\frac{t^p}{p}$) for $p>1$, the stability is proved in Ko{\l}odziej \cite[Theorem 4.1]{Ko03} and \cite[Theorem 2.5.2]{Ko98}; for right hand side in $L^1(\log L)^p$ (or $\Phi(t)=t\cdot\log(1+t)^p$) for $p>n$, it is proved by Guo-Phong-Tong \cite[Theorem 1]{GuPhTo23-1} and \cite[Theorem 1]{GuPhTo23-2}. Applying it to $\Phi(t)=t\cdot\log(1+t)^n\cdot(\log\circ\log(1+t))^p$ for $p>n$, we establish the stability estimates of Guedj-Guenancia-Zeriahi \cite[Theorem 1.6]{GuGuZe25} (see also \cite[Theorem 1.3]{Liu24+} for related discussion). See the discussion in Section \ref{example section}.

Such a stability estimate played an important role in proving the H\"older continuity for solutions of CMA with right hand side in $L^p$ for $p>1$ (cf. \cite[Theorem 2.1]{Ko08} and \cite[Theorem A]{DDGHKZ}) and the modulus of continuity for solutions of CMA with right hand side in $L^1(\log L)^p$ for $p>n$ (See \cite[Theorem 1]{GuPhToWa21} \cite[Theorem 1.6]{GuGuZe25} and the references therein). The proof of \cite[Theorem A]{DDGHKZ} uses Demailly's regularization theorem \cite[Theorem 1.1]{De94} as in Berman-Demailly \cite{BeDe12}. The Kiselman minimum principle \cite{Ki78} coupled with Demailly's method of attenuating singularities (the Kiseman-Legendre transform) from \cite{De94} can be allowed us to remove symmetry/curvature constraints.

\begin{remark}
More properties of the solution $\varphi_0$ can be deduced. For example, one can estimate the modulus of continuity for solutions $\varphi_0$ to the CMA (\ref{MA: smooth case}) with right-hand side in an Orlicz space which satisfies integrability condition (\ref{integrability 00}) by applying the strategy of \cite{BeDe12,DDGHKZ}. Then, as in \cite[Theorem 4.1]{Li21}, the finiteness of the diameter is a consequence of the modulus of continuity for this solution $\varphi_0$ (see \cite[Theorem 2]{GuPhToWa21} for related discussion). A full discussion of this topic can be found in a separate paper \cite{ZhZh25-1}.
\end{remark}

\subsection{Geometric estimates of K\"ahler metrics}

Let $(X,\omega_0)$ be a K\"ahler manifold. Let $\varphi_0$ be a solution  to CMA {\rm(\ref{MA: smooth case})}. Assume that $\omega=\theta+\sqrt{-1}\partial\bar{\partial}\varphi_0$ is a K\"ahler metric.

Considerable work has been devoted in recent decades to establishing diameter upper bounds for K\"ahler manifolds, especially in the context of degenerate K\"ahler-Einstein families. This has resulted in a substantial body of literature on the subject (see, e.g., \cite{Pau01,To09,To10,RoZh11,FGS20,Li21,GuPhToWa21,GuPhSoSt24-1,GuGuZe25,GuTo25} and references therein). We will obtain a diameter bounds and volume noncollapsing estimates for the K\"ahler metrics associated with the CMA in Orlicz space with uniformly bounded potentials by adapting the methods of \cite[Section 5]{GuPhSoSt24-1} and \cite[Section 3]{GuPhSt24}.

We begin by establishing global integral bounds for the Green function and its gradient, associated with the K\"ahler metric $\omega$ arising from the CMA in an Orlicz space, under additional restrictions on the Monge-Amp\`ere measure (See Proposition \ref{L1+epsilon-Green}, Lemma \ref{Gradient-Green} in Section \ref{Refined integral estimate of Green function}). As a consequence, we obtain an upper bound for the diameter of the manifold with respect to $\omega$. A precise explanation is provided in Theorem \ref{main theorem} (Section \ref{Refined integral estimate of Green function}).

We then establish a local integral bound for the Green function associated with the metric $\omega$ on any geodesic ball $B_r(x_0)$ of radius $r$ centered at a point $x_0 \in X$, where the ball is defined with respect to $\omega$. The detailed proof is provided in Section \ref{local volume of geodesic balls}. This yields an improved volume lower bound for geodesic balls, refining the earlier results. 
The detailed discussion is provided in Section \ref{SG-N-function-diam-bound}, Section \ref{Polynomial growth N-functions for diamter-volume}.

Define the iterated logarithmic functions $\{g_k\}_{k=0}^{\infty}$ recursively via
$$g_0(t)=t,\,g_1(t)=\log (1+t),\,\cdots,g_{k+1}(t)=\log(1+g_k(t)),\cdots.$$
Assume that
$$\Phi\ge g_0(t)\cdot g_1(t)^n\cdot g_2(t)^{2n}\cdots g_{k-1}(t)^{2n}g_k(t)^p$$
for some $k\ge 2$ and $p>2n$, whenever $t$ is sufficiently large.

\begin{theorem}\label{diameter-bound-main-result}
Assume that $F\in L^\Phi$ for some $N$-function $\Phi$ introduced above. Then its diameter of $\omega$ is uniformly bounded
\begin{equation}\label{diameter bound in Orlicz space}
\diam(X,\omega)\le D,
\end{equation}
for some $D$ depending on $\omega_0,\Phi,A,K,n$. The volume of metric ball satisfies the lower bound
\begin{equation}\label{volume bound in Orlicz space}
\frac{\vol(B_r(x))}{[\omega^n]}\ge v(r)
\end{equation}
where $v(r)$ is a positive function of $r$, depending on $\omega_0,\Phi,A,K,n$, so that
$$\lim_{r\rightarrow 0}v(r)=0.$$
\end{theorem}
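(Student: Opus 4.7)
The plan is to reduce Theorem \ref{diameter-bound-main-result} to three ingredients, each of which is developed elsewhere in the paper: the a priori $L^\infty$ bound of Theorem \ref{infinity estimate: 2}, the global integral Green function estimates (Proposition \ref{L1+epsilon-Green} and Lemma \ref{Gradient-Green}), and their local counterparts on geodesic balls. The engine is the scheme of Guo-Phong-Song-Sturm \cite{GuPhSoSt24-1} and Guo-Phong-Sturm \cite{GuPhSt24}, which converts an $L^{1+\varepsilon}$ bound on the Green function $G_\omega$ together with an $L^1$ bound on $|\nabla_\omega G_\omega|_\omega$ into a diameter upper bound, and their localized versions on $B_r(x_0)$ into a volume noncollapsing estimate.

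First I would verify the integrability conditions (\ref{integrability 0}) and (\ref{integrability 00}) under the hypothesis $\Phi \ge g_0\cdot g_1^n\cdot g_2^{2n}\cdots g_{k-1}^{2n}\cdot g_k^p$ with $p>2n$. A direct computation with Young's inequality identifies the convex conjugate $\Phi^\ast$, and hence $h(t)=(\Phi^\ast)^{-1}(e^t)$, up to lower-order factors; the chain of iterated logarithms is arranged precisely so that $h(t)^{1/n}$ grows at least like $e^{t/n}\cdot t\cdot (\log t)^2\cdots (\log_{k-1}t)^{p/n}$ at infinity, making $\mathbf I$ finite because $p/n>2$. The same bookkeeping, applied now to $(h^\ast)^{-1}$, yields (\ref{integrability 00}). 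With both integrability conditions in hand, Theorem \ref{infinity estimate: 2} gives a uniform $L^\infty$ bound on $\varphi_0$, so $\omega=\theta+\sqrt{-1}\partial\bar\partial\varphi_0$ is a K\"ahler metric with uniformly bounded potential; Theorem \ref{Stability theorem 1} becomes available as well.

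With these a priori bounds I would then invoke Proposition \ref{L1+epsilon-Green} to obtain a uniform $L^{1+\varepsilon}$ bound on $G_\omega(x,\cdot)$ for some $\varepsilon>0$, and Lemma \ref{Gradient-Green} for a uniform $L^1$ bound on $|\nabla_\omega G_\omega(x,\cdot)|_\omega$. Following the argument of \cite[Section 5]{GuPhSoSt24-1}, for any two points $x,y\in X$ the distance $d_\omega(x,y)$ can be controlled by an integral functional of $|\nabla_\omega G_\omega(x,\cdot)-\nabla_\omega G_\omega(y,\cdot)|_\omega$ against $\omega^n$; feeding in the two uniform estimates yields (\ref{diameter bound in Orlicz space}). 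For the volume bound (\ref{volume bound in Orlicz space}) I would replay the argument on balls $B_r(x_0)$ using the local Green function estimates obtained in Section \ref{local volume of geodesic balls}: testing the defining identity of the local Green function against the constant function, combined with the uniform potential bound, gives a relation between $\vol(B_r(x_0))$ and an explicit function $v(r)$ of the integrability parameters of $\Phi$, with $v(r)\to 0$ as $r\to 0$ tracked by the scaling of the local estimate.

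The main obstacle is Step 1: the delicate bookkeeping of iterated logarithms. The specific exponents (the exponent $2n$ on the intermediate $g_i$'s and the exponent $p>2n$ on the terminal $g_k$) are forced by the need to simultaneously pass three tests, namely the convergence of $\mathbf I$, the convergence of (\ref{integrability 00}), and the $L^{1+\varepsilon}$-Green function estimate combined with the $L^1$-gradient bound. Any weakening of these exponents breaks at least one of the downstream estimates, so the technical heart of the argument is to show that this precise hierarchy is what makes all four conditions hold at once; once it is in place, Steps 2 and 3 are direct adaptations of \cite{GuPhSoSt24-1,GuPhSt24} to the Orlicz setting.
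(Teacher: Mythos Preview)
Your overall architecture is right---the diameter bound comes from the Green function machinery of Section~\ref{Refined integral estimate of Green function} and the volume bound from Section~\ref{local volume of geodesic balls}---but there is a genuine gap in how you propose to feed the hypothesis on $\Phi$ into that machinery, and a computational error in your analysis of $h$.

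First, the computational error. For a slow-growth $N$-function of the shape $\Phi=g_0\,g_1^{p_1}\cdots g_k^{p_k}$ with $p_0=1$, the function $h(t)=(\Phi^*)^{-1}(e^t)$ is \emph{not} exponential: by the calculation in Example~\ref{Example-slow} one has $h(t)\sim t^{p_1}g_1(t)^{p_2}\cdots g_{k-1}(t)^{p_k}$, which is polynomial with iterated-log corrections. Your claim that $h(t)^{1/n}$ grows like $e^{t/n}\cdot t\cdot(\log t)^2\cdots$ would be correct only for $\Phi$ of polynomial growth $t^p/p$, not for the slow-growth $\Phi$ in the hypothesis. The integral $\mathbf I$ converges here because $h(t)^{1/n}\sim t\,g_1(t)^2\cdots g_{k-2}(t)^2 g_{k-1}(t)^{p/n}$ and already $\int\frac{dt}{t\,g_1(t)^2}<\infty$; the exponent $n$ on $g_1$ (not $2n$) suffices for $\mathbf I$.

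Second, and more seriously, you treat Proposition~\ref{L1+epsilon-Green} and Lemma~\ref{Gradient-Green} as if their hypotheses were the integrability conditions (\ref{integrability 0}) and (\ref{integrability 00}) on $\Phi$ itself. They are not. Those results require the construction of an auxiliary $N$-function $\Phi_1$ satisfying (C1)--(C3), from which one derives the comparison function $E(t)=\min(E_1(t),E_2(t))$ via (\ref{integrability comparison 2})--(\ref{integrability comparison 4}); Lemma~\ref{Gradient-Green} and Theorem~\ref{main theorem} then require the further integrability $\int_1^\infty\frac{dt}{tE(t)}<\infty$ in (\ref{integrability comparison 5}). The paper's proof of Theorem~\ref{diameter-bound-main-result} (the Corollary at the end of Section~\ref{SG-N-function-diam-bound}) is exactly this: choose $\Phi_1=g_0\,g_1^n\cdots g_{k+2}^n\,g_{k+3}^{2n}$, compute $E\sim E_2$ explicitly (Subcases~2.1--2.3), and verify (\ref{integrability comparison 5}). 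The punchline is in Subcase~2.3: the integrability (\ref{integrability comparison 5}) holds \emph{if and only if} the intermediate exponents are exactly $2n$ and the terminal exponent exceeds $2n$. So the exponents $2n$ in the hypothesis are forced not by $\mathbf I$ or by (\ref{integrability 00})---both of which hold under the much weaker requirement $p_i\ge n$, $p_k>n$---but by the gradient-of-Green condition (\ref{integrability comparison 5}) on $E$. Your Step~1 misattributes the role of these exponents, and your Step~2 omits the $\Phi_1$/$E$ construction entirely; without it, Proposition~\ref{L1+epsilon-Green} and Lemma~\ref{Gradient-Green} cannot be invoked. Condition~(\ref{integrability 00}) and Theorem~\ref{Stability theorem 1}, incidentally, play no role in this theorem.
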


We remark that the function $v(r)$ admits a concise expression; see Section \ref{SG-N-function-diam-bound}, Section \ref{Polynomial growth N-functions for diamter-volume} for details. In fact, for the specific cases
$$\Phi(t)=t\log^pt~(p>n),\qquad \Phi(t)=t^p/p~(p>1)$$
see equation (\ref{almost-sharp-volume-estimate-1}) in Section \ref{SG-N-function-diam-bound} and equation (\ref{almost-sharp-volume-estimate}) in Section \ref{Polynomial growth N-functions for diamter-volume}, respectively. In particular, our theorem extends prior diameter bounds from \cite[Theorem 1.2]{FGS20}, \cite[Theorem 2]{GuPhToWa21}, \cite[Theorem 1.1]{GuPhSoSt24-1}, \cite[Theorem B]{GuGuZe25}, \cite[Theorem B]{GuTo25}, \cite[Theorem 1.4]{Liu24+} and \cite[Theorem 1.1]{Vu24+}, while for volume estimates, it provides a generalization and refinement of results in \cite[Theorem 1.1]{GuPhSoSt24-1}, \cite[Theorem B]{GuTo25}, \cite[Corollary 7.8]{Liu24+}, \cite[Theorem 1.1]{Vu24+} and \cite[Theorem 1.1, 1.2]{ZwZy25+}. The derivation of the diameter bound and the volume non-collapsing estimates required only the $L^{\infty}$-estimates for the solution of the CMA (\ref{MA: smooth case}) with more restrictions on Monge-Amp\`ere measure. The method we employ stands in contrast to that of \cite[Theorem 4.1]{Li21} and \cite[Theorem 2]{GuPhToWa21}. Our approach relies crucially on a technique introduced in \cite{GuPhTo23-2,GuPhSoSt24-1}, which entails comparing the original equation with an auxiliary CMA.

\begin{remark}
In August 2023, the second author was invited to deliver a short course on the complex Monge-Amp\`ere equation in Orlicz spaces at Hunan University. Some of the results contained in this paper were presented during that course. Both authors would like to express their gratitude to Yashan Zhang for his warm hospitality and for providing an excellent research environment during their visits to Hunan University.
\end{remark}

In a forthcoming paper we shall discuss more detailed geometry structure of $\omega$, such as the modulus of continuity of distance function $d_{\omega}$ induced by $\omega=\theta+\sqrt{-1}\partial\bar{\partial}\varphi_0$, by introducing some stronger integrability conditions.

{\bf Acknowledgment}: We would like to thank Quang-Tuan Dang, Jiyuan Han, and Bi Zhou for their insightful comments, for bringing to our attention an error in Step 2 of the proof of Theorem \ref{Stability theorem 1}, and for suggesting a concise remedy. Their contributions have significantly enhanced the quality of this paper.



\section{Preliminary results}

\subsection{Orlicz space}\label{Orlicz-space}

There is a classical reference for Orlicz space \cite{KrRu}. We just recall some basic material that will be used in the paper.

Let $\Phi:[0,\infty)\rightarrow[0,\infty)$ be a continuous and convex function that satisfies
\begin{equation}\label{N function}
\Phi(0)=0,\hspace{0.3cm}\lim_{t\rightarrow 0}\frac{\Phi(t)}{t}=0,\hspace{0.3cm}\lim_{t\rightarrow\infty}\frac{\Phi(t)}{t}=\infty.
\end{equation}
Such function is usually called an {\it $N$-function} in Orlicz space theory.

\begin{definition}[Orlicz space]
Let $(X,\mu)$ be a measure space. Assume that $\mu$ is a $\sigma$ finite positive measure. The space
\begin{equation}
L^\Phi(\mu)=\left\{f\mbox{ measurable }\,\bigg|\,\int_X\Phi(a|f|)d\mu<\infty\mbox{ for some }a>0.\right\}
\end{equation}
with Luxemburg norm
\begin{equation}
\|f\|_{L^\Phi}=\inf\left\{b>0\,\bigg|\,\int_X\Phi(|f|/b)d\mu\le 1\right\}
\end{equation}
is called the Orlicz space associated to $\Phi$. It is a complete Banach space.
\end{definition}

For any continuous function $f:[0,\infty)\rightarrow[0,\infty)$ define the "convex conjugation", which it is called in \cite{KrRu} the {\it complementary} function of $f$,
\begin{equation}\label{N function-1}
f^*(s)=\sup\{st-f(t)\,|\,t\ge 0\}.
\end{equation}
It is obvious that $f^*$ is monotone increasing on $[0,\infty)$. The supremum is always achieved, for any $s\ge 0$, at some $t^*=t^*(s)$ if $\lim_{t\rightarrow\infty}\frac{f(t)}{t}=\infty$. Under the later assumption the complementary $f^*$ grows at least linearly, so it is proper. Moreover, there is a universal inequality, usually called {\it Young inequality},
\begin{equation}\label{N function-2}
s\cdot t\le f(s)+f^*(t),\hspace{0.3cm}\forall s,t\ge 0.
\end{equation}

Let $\Phi$ be an $N$-function, then $\Phi(t)/t$ is increasing in $t$. In particular,
$$c\cdot\Phi\left(\frac{t}{c}\right)\le\Phi(t),\quad\forall t\ge 0, c\ge 1.$$
It follows that if $\int_X\Phi(|f|)d\mu\le c$ for some $c\ge 1$, then $\|f\|_{L^\Phi}\le c$.

Let $\Phi^*$ be the complementary of an $N$-function $\Phi$. There is a universal inequality
\begin{equation}\label{N function-3}
t<\Phi^{-1}(t)\cdot(\Phi^*)^{-1}(t)\le 2t,\hspace{0.3cm}\forall t>0.
\end{equation}
Let us introduce another function associated to $\Phi$,
\begin{equation}\label{N function-4b}
h(t)=h_\Phi(t)=(\Phi^*)^{-1}(e^t).
\end{equation}
It has a trivial lower bound
\begin{equation}
h(t)\ge\frac{e^t}{\Phi^{-1}(e^t)}.
\end{equation}

Given any two $N$-function $\Phi_1,~\Phi_2$. By (\ref{N function-1}) and (\ref{N function-4}), we see that if $\Phi_1(t)<\Phi_2(t)$, then
\begin{equation}\label{phi1-phi2}
\Phi_1^*(s)>\Phi_2^*(s),~~~h_1(s)<h_2(s).
\end{equation}
It is known that $\Phi^*(t^2/2)=t^2/2$, so $\Phi(t)<\Phi^*(t)$ for large $t$ if $\Phi$ grows less than a polynomial of order $2$.

The composition of two $N$-functions is also an $N$-function. On the other hand, given $N$-functions $\Phi$ and $\Phi_1$ such that $\Phi>\Phi_1$, then
$$\Phi_2=\Phi\circ\Phi_1^{-1}$$
is also an $N$-function, see \cite[P10]{KrRu}, which satisfies $$\lim_{t\to+\infty}\frac{\Phi_2^*(t)}{\Phi^*(t)}=+\infty,~~~\lim_{t\to+\infty}\frac{(\Phi_2^*)^{-1}(t)}{(\Phi^*)^{-1}(t)}=0.$$

In Section \ref{example section} several typical examples of $N$-functions and their relation to stability of CMA (\ref{MA: smooth case}) are discussed in full detail.

\subsection{Integrability conditions for $N$-function}\label{integrability}

Let $\Phi$ be an $N$-function and $h(t)=(\Phi^*)^{-1}(e^t)$. The solvability of the CMA for a measure $F$ in Orlicz space $L^\Phi$ is related to an integrability condition
\begin{equation}\label{integrability condition: 1}
\int_0^\infty\frac{dt}{h^{1/n}(t)}<\infty.
\end{equation}
The condition (\ref{integrability condition: 1}) implies that
\begin{equation}\label{limit 2}
\lim_{t\rightarrow\infty}\frac{t}{h^{1/n}(t)}=0.
\end{equation}
It is trivial that $h^*$ grows at least linearly. Let $\tau_0=h(1)$. By definition we have that $h^*(s)>0$ and increase strictly whenever $s>\tau_0$.

We now proceed to examine the following integrability condition
\begin{equation}\label{integrability condition: 2}
\int_{h(\tau_0)}^\infty\frac{dt}{t^{1/n}\cdot (h^*)^{-1}(t)}<\infty.
\end{equation}
It is a stronger condition than (\ref{integrability condition: 1}). Actually, by the trivial inequality $h^{-1}(t)\cdot (h^*)^{-1}(t)\le 2t$ for all $t>h^*(\tau_0)$, we have
\begin{eqnarray*}
\int_{h(\tau_0)}^\infty\frac{dt}{t^{1/n}\cdot (h^*)^{-1}(t)}&\ge&\frac{1}{2}\cdot\int_{h(\tau_0)}^\infty\frac{h^{-1}(t)dt}{t^{1+\frac{1}{n}}}\\
&=&\frac{1}{2}\cdot\int_{\tau_0}^\infty\frac{s\cdot h'(s)ds}{h(s)^{1+\frac{1}{n}}}\\
&=&\frac{n}{2}\cdot\frac{\tau_0}{h(\tau_0)^{1/n}}+\frac{n}{2}\cdot\int_{\tau_0}^\infty\frac{ds}{h(s)^{\frac{1}{n}}},
\end{eqnarray*}
where we used the variable change $h^{-1}(t)=s$. On the other hand, if $h$ is convex for large $t$, namely $t\ge\tau_1$ for some $\tau_1>0$, then the two integrability conditions are equivalent.

Assume that $h(t)$ is convex for large $t$, namely $t\ge \tau_1$ for some large constant $\tau_1$. Then $h$ is the principal part of some $N$-function, namely an $N$-function $\hat{h}$ so that $\hat{h}=h$ on the interval $[\tau_2,\infty)$ for some larger $\tau_2$. See \cite[Page 16-17]{KrRu} for a construction of such $H$. Then, by properness, $h^*(s)=\hat{h}(s)$ for sufficiently large $s$. In particular, there is $\tau_3$ so that $h(s)=\hat{h}(s)$ and $h^*(s)=\hat{h}^*(s)$ when $s\ge\tau_3$. By properness of $h$ again,
$$(h^*)^{-1}(s)=(\hat{h}^*)^{-1}(s)>\frac{s}{\hat{h}^{-1}(s)}=\frac{s}{h^{-1}(s)},$$
whenever $s\ge\tau_4$ for some sufficiently large $\tau_4$. Then, for any $\tau\ge\tau_4$,
\begin{eqnarray*}
\int_{\tau}^\infty\frac{dt}{t^{1/n}\cdot(h^*)^{-1}(t)}&<&\int_\tau^\infty\frac{h^{-1}(t)dt}{t^{1+\frac{1}{n}}}\\
&=&\int_{h^{-1}(\tau)}^\infty\frac{s\cdot h'(s)ds}{h(s)^{1+\frac{1}{n}}}\\
&=&n\cdot\frac{h^{-1}(\tau)}{\tau^{\frac{1}{n}}}+n\cdot\int_{h^{-1}(\tau)}^\infty\frac{ds}{h^{1/n}(s)}.
\end{eqnarray*}
Together with the discussion above, we see that the integrable conditions (\ref{integrability condition: 1}) and (\ref{integrability condition: 2}) are equivalent. In Section \ref{example section} we show that all the $h$ functions in the interested examples are all bounded below by certain $N$-functions.


\begin{definition}\label{Definition-hbar}
Under the integrability condition (\ref{integrability condition: 2}) we define
\begin{equation}
H(t)=H_\Phi(t)=\int_t^\infty\frac{ds}{s^{1/n}\cdot (h^*)^{-1}(s)},
\end{equation}
for any $t\ge h(\tau_0)$. Moreover, for any small $\delta>0$, we define $\tau(\delta)$ to be the time $\tau$ so that
\begin{equation}
\delta\cdot\Phi^*(\tau)=H(\tau).
\end{equation}
Then we put
\begin{equation}
\hbar(\delta)=H(\tau(\delta)).
\end{equation}
\end{definition}

By the monotonicity of $\hbar(\delta)$ and $\Phi^*$, we can easily check that the function $\tau(\delta)$ is monotone decreasing with respect to $\delta.$ The function $H(t)$ decreases strictly to $0$ as $t\rightarrow\infty$. Then, by monotonicity, one sees that $\tau(\delta)$ is uniquely determined for any $\delta\le\frac{H(h(\tau_0))}{\Phi^*(h(\tau_0))}$. It can also be seen trivially that
$$\lim_{\delta\rightarrow 0}\tau(\delta)=\infty,~~\mbox{and}~~\lim_{\delta\rightarrow 0}\hbar(\delta)=0$$
In particular, the function
\begin{equation}\label{Phi-large-enough}
\Phi^*(\tau(\delta))=\frac{\hbar(\delta)}{\delta}\gg 1, \hspace{0.5cm}\forall~0<\delta\ll 1.
\end{equation}
The functions $\tau$ and $\hbar$ will relates crucially to the stability of plurisubharmonic functions around a solution to CMA (\ref{MA: smooth case}).

\subsection{Growth conditions for $N$-function}\label{construction-of-N-1}

In order to obtain the geometric estimates such as the Green function estimate and the diameter estimate to solutions of CMA (\ref{MA: smooth case}), more restrictions to the $N$-function are needed. The first growth condition for $N$-function $\Phi$ that will be used in the paper is the following
\begin{equation}\label{growth condition for N-function: 1}
\frac{\Phi(t_1\cdot t_2)}{t_1\cdot t_2}\le L\cdot\left(\frac{\Phi(t_1)}{t_1}+\frac{\Phi(t_2)}{t_2}\right),\qquad \forall ~~t_1,t_2\ge 1,
\end{equation}
for some constant $L>0$. The condition implies in particular that $\Phi(t^2)\le 2L\cdot\Phi(t)\cdot t$. Due to the argument in Chapter I \S 6 of \cite{KrRu}, if an $N$-function satisfies (\ref{growth condition for N-function: 1}), then its complementary $\Phi^*$ satisfies $\triangle^2$-condition, so
$$\Phi(t)\le t\cdot\log^\beta t,$$
for some large constant $\beta$ and any sufficiently large $t$. On the other hand, it is easy to check that any slow growth $N$-function in section \ref{slow example} satisfies the condition (\ref{growth condition for N-function: 1}).



\section{Complex Monge-Amp\`ere equation in Orlicz space}\label{CMA-in-Orlicz}

Let $X$ be a compact K\"ahler manifold and $\omega_0$ be a fixed background K\"ahler metric on $X$. Let $\theta$ be a smooth (1,1) form in a big class as in the Introduction. Tian's $\alpha$-invariant \cite{Ti} gives a uniform Moser-Trudinger inequality for the measure $\omega_0^n$, namely,
\begin{equation}\label{alpha invariant}
\frac{1}{[\omega_0^n]}\cdot\int_X e^{-\delta(\psi-\sup\psi)}\omega_0^n\le C
\end{equation}
for some positive constants $\delta=\delta(\omega_0,A)$, $C=C(\omega_0,A)$ and any $\psi\in\Psh(\theta)$.

Suppose that $\varphi_0\in\Psh(\theta)$ satisfies a CMA (\ref{MA: smooth case}) and normalization (\ref{prob measure}). Assume further that $F$ belongs to an Orlicz space $L^\Phi(\omega_0^n)$ for some $N$-function $\Phi$. In this section we give a proof of the a priori $L^\infty$ estimate to the CMA. The basic reference is Ko{\l}odziej classical paper \cite{Ko98}; more references are \cite{Ko98, Ko02, Ko03, Ko05} and \cite{EyGuZe09, FGS20, GuZe12, PhSoSt12, GuPh23, GuPhTo23-2}.

We will take Ko{\l}odziej approach \cite{Ko98} and follow the arguments by Fu-Guo-Song \cite{FGS20} to prove Theorem \ref{infinity estimate: 2} and the argument of Guo-Phong-Tong-Wang \cite{GuPhToWa21} to prove Theorem \ref{Stability theorem 1}.

\subsection{Measure domination by capacity}

Let $\mathcal{V}_\theta$ be the envelope (or extremal function) of $\theta$,
\begin{equation}
\mathcal{V}_\theta(x)=\sup\{\psi(x)\,|\,\psi\in\Psh(\theta),\,\sup\psi=0.\}
\end{equation}
which is $\theta$-plurisubharmonic. By \cite{Be19, BeDe12, To18, ChZhou19} the envelope $\mathcal{V}_\theta$ has $C^{1,1}$ regularity. There is a "tautological maximum  principle" \cite[P1028]{GuZe12},
\begin{equation}
\sup\psi=\sup(\psi-\mathcal{V}_\theta),
\end{equation}
for any $\psi\in\Psh(\theta)$.

More generally, for any open subset $E$ there is a definition of envelope \cite{BEGZ10, EyGuZe09, FGS20}
$$\mathcal{V}_{\theta, E}=\sup\{u\in\Psh(\theta)\,|\,u\le 0\mbox{ in }E\}.$$
is called the {\it extremal function} of $E$. The extremal function $\mathcal{V}_{\theta, E}$ satisfies
\begin{itemize}
\item[(i)] $\mathcal{V}_{\theta, E}\in\Psh(\theta)\cap L^\infty(X)$;
\item[(ii)] $\mathcal{V}_{\theta, E}\le0$ on $E$;
\item[(iii)] $\big(\theta+\sqrt{-1}\partial\bar{\partial}\mathcal{V}_{\theta, E}\big)^n=0$ on $X\backslash\overline{E}$.
\end{itemize}

Let $\varphi_0$ be a solution to the CMA {\rm(\ref{MA: smooth case})}. Define $\omega=\theta+\sqrt{-1}\partial\bar{\partial}\varphi_0$.

\begin{definition}[Relative Capacity \cite{BEGZ10, FGS20}]
For any Borel measurable subset $K$ define the capacity
\begin{equation}
\Ca_{\theta}(K)=\sup\bigg\{\int_K\big(\theta+\sqrt{-1}\partial\bar{\partial}u\big)^n\bigg|\,u\in\Psh(\theta),\,0\le u-\mathcal{V}_{\theta}\le 1\bigg\}.
\end{equation}
\end{definition}

Two a prior estimates are in order. The first lemma is about a capacity estimate, taken from \cite[Lemma 3.2]{FGS20}; see \cite[Corollary 2.3, Lemma 4.4]{BEGZ10} etal for other versions of this kind of estimates. The second one is a measure domination property for Monge-Amp\`ere measure in Orlicz space; it is a slight generalization of previous results in \cite{Ko98, FGS20, BEGZ10}.

\begin{lemma}{\rm\cite{Ko98, BEGZ10, FGS20}}
Let $\psi\in\Psh(\theta)\cap \mathcal{E}(X)$. For any $s\in\mathbb{R}$ and $0<r\le 1$ we have
\begin{equation}\label{capacity estimate: 11}
r^n\cdot\Ca_{\theta}\{\psi-\mathcal{V}_{\theta}<s-r\}\le\int_{\{\psi-\mathcal{V}_{\theta}<s\}}\big(\theta+\sqrt{-1}\partial\bar{\partial}\psi\big)^n
\end{equation}
\end{lemma}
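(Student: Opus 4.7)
The plan is to prove the capacity estimate by a standard Bedford--Taylor type comparison argument, working with an arbitrary competitor $u$ in the supremum defining $\Ca_\theta$ and then passing to the sup at the end.

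First, I would fix $u\in\Psh(\theta)$ with $0\le u-\mathcal{V}_\theta\le 1$ and introduce the convex combination
\[
v=r\cdot u+(1-r)\cdot\mathcal{V}_\theta,
\]
which is $\theta$-psh (since $r\in(0,1]$ and both $u,\mathcal{V}_\theta$ are), bounded, and satisfies $0\le v-\mathcal{V}_\theta=r(u-\mathcal{V}_\theta)\le r$. By multilinearity of the mixed Monge--Amp\`ere operator and positivity of currents, this combination has the crucial lower bound
\[
\big(\theta+\sqrt{-1}\partial\bar\partial v\big)^n\ge r^n\cdot\big(\theta+\sqrt{-1}\partial\bar\partial u\big)^n,
\]
which is where the factor $r^n$ on the left-hand side of (\ref{capacity estimate: 11}) originates.

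Next I would squeeze the set $\{\psi-\mathcal V_\theta<s-r\}$ between two sublevel sets better adapted to the comparison principle. On one hand, on $\{\psi-\mathcal V_\theta<s-r\}$, using $v-\mathcal V_\theta\ge 0$, I have $\psi<\mathcal V_\theta+s-r\le v+s-r$, so
\[
\{\psi-\mathcal V_\theta<s-r\}\subset\{\psi<v+s-r\}.
\]
On the other hand, on $\{\psi<v+s-r\}$, using $v-\mathcal V_\theta\le r$, I have $\psi<\mathcal V_\theta+s$, hence
\[
\{\psi<v+s-r\}\subset\{\psi-\mathcal V_\theta<s\}.
\]
Let $F:=\{\psi<v+s-r\}$ be the intermediate set.

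Now I would apply the Bedford--Taylor comparison principle to the bounded $\theta$-psh functions $\psi$ and $v+s-r$ on the open set $F$ (the constant $s-r$ does not affect $\theta+\sqrt{-1}\partial\bar\partial$), obtaining
\[
\int_F\big(\theta+\sqrt{-1}\partial\bar\partial v\big)^n\le\int_F\big(\theta+\sqrt{-1}\partial\bar\partial\psi\big)^n.
\]
Combining with the multilinearity bound from the first step and the two set inclusions above,
\[
r^n\int_{\{\psi-\mathcal V_\theta<s-r\}}\!\!\big(\theta+\sqrt{-1}\partial\bar\partial u\big)^n\le\int_F\big(\theta+\sqrt{-1}\partial\bar\partial v\big)^n\le\int_F\big(\theta+\sqrt{-1}\partial\bar\partial\psi\big)^n\le\int_{\{\psi-\mathcal V_\theta<s\}}\!\!\big(\theta+\sqrt{-1}\partial\bar\partial\psi\big)^n.
\]
Taking the supremum over all admissible $u$ on the left yields (\ref{capacity estimate: 11}).

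The main obstacle is not computational but conceptual: producing the chain of inclusions that sandwiches $\{\psi-\mathcal V_\theta<s-r\}$ inside a set of the form $\{\psi<v+\text{const}\}$ on which the comparison principle gives a meaningful inequality, while simultaneously arranging the convex combination $v$ so that $(\theta+\sqrt{-1}\partial\bar\partial v)^n$ dominates $r^n(\theta+\sqrt{-1}\partial\bar\partial u)^n$. A minor technical point worth checking is that $\mathcal{V}_\theta$ has enough regularity (it is $C^{1,1}$, in particular bounded) for the Bedford--Taylor theory to apply to $v$; this is exactly the content of the regularity results cited just before the lemma.
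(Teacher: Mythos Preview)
Your argument is correct and is precisely the standard Bedford--Taylor comparison proof of this inequality. The paper itself does not supply a proof of this lemma---it is simply quoted from \cite{Ko98, BEGZ10, FGS20}---and the argument you give is exactly the one found in those references (in particular \cite[Lemma~3.2]{FGS20} and \cite[Lemma~4.4]{BEGZ10}).
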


\begin{lemma}\label{measure estimate: 0}
For any open set $E$ we have
\begin{equation}\label{measure estimate: 1}
\frac{1}{[\omega_0^n]}\int_E F\omega_0^n\le C(\omega_0,A)\cdot\frac{\|F\|_{L^\Phi(\omega_0^n)}}{h\left(\delta\cdot\left(\frac{\Ca_{\theta}(E)}{[\omega^n]}\right)^{-1/n}\right)},
\end{equation}
where $\delta=\delta(\omega_0)$ is a positive constant depending only on $\omega_0$.
\end{lemma}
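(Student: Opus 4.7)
The proof is a three-ingredient estimate: (i) a H\"older-type inequality in Orlicz spaces to split $F\mathbf{1}_E$, (ii) an exact computation of the Luxemburg norm of an indicator function, and (iii) the classical volume-vs-capacity exponential decay coming from Tian's $\alpha$-invariant combined with the extremal function $\mathcal{V}_{\theta,E}$.

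\textbf{Step 1 (Orlicz H\"older).} Applying the Young inequality (\ref{N function-2}) to $F/\|F\|_{L^\Phi}$ and $\mathbf{1}_E/b$ and integrating against the probability measure $\omega_0^n/[\omega_0^n]$ gives, in the standard way, the Orlicz H\"older inequality
\begin{equation*}
\frac{1}{[\omega_0^n]}\int_E F\,\omega_0^n\le 2\,\|F\|_{L^\Phi(\omega_0)}\cdot\|\mathbf{1}_E\|_{L^{\Phi^*}(\omega_0)}.
\end{equation*}

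\textbf{Step 2 (Indicator norm).} From the definition of the Luxemburg norm, $\|\mathbf{1}_E\|_{L^{\Phi^*}}$ is the infimum of $b>0$ with $\Phi^*(1/b)\cdot\vol(E)/[\omega_0^n]\le 1$. Solving this directly yields
\begin{equation*}
\|\mathbf{1}_E\|_{L^{\Phi^*}(\omega_0)}=\frac{1}{(\Phi^*)^{-1}\!\left([\omega_0^n]/\vol(E)\right)}.
\end{equation*}

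\textbf{Step 3 (Volume-capacity exponential decay).} Let $u=\mathcal{V}_{\theta,E}^*$ and $M_E=\sup_X u$. By the tautological maximum principle, $u-M_E\in\Psh(\theta)$ with $\sup(u-M_E)=0$, so Tian's $\alpha$-invariant inequality (\ref{alpha invariant}) combined with $u\le 0$ on $E$ (property (ii) of $\mathcal{V}_{\theta,E}$) gives
\begin{equation*}
\vol(E)\le\int_E e^{-\delta(u-M_E)}\omega_0^n\cdot e^{-\delta M_E}\le C(\omega_0,A)\,[\omega_0^n]\,e^{-\delta M_E}.
\end{equation*}
It remains to compare $M_E$ with $\Ca_\theta(E)$. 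The rescaled function $u/M_E$ (when $M_E\ge 1$) belongs, after an affine shift, to the admissible class $0\le\,\cdot-\mathcal{V}_\theta\le 1$ in the definition of $\Ca_\theta$, and the Monge-Amp\`ere mass of $u$ concentrates on $\overline E$ with total mass $[\omega^n]$ (from property (iii) and the class of $\theta$). This yields the comparison
\begin{equation*}
\Ca_\theta(E)\ge C^{-1}\frac{[\omega^n]}{M_E^n},\qquad\text{i.e.}\qquad M_E\ge C^{-1}\!\left(\frac{[\omega^n]}{\Ca_\theta(E)}\right)^{1/n},
\end{equation*}
as in \cite{BEGZ10,FGS20}. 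Combining, and absorbing constants into $\delta$,
\begin{equation*}
\frac{[\omega_0^n]}{\vol(E)}\ge C^{-1}\exp\!\left(\delta\cdot\left(\frac{\Ca_\theta(E)}{[\omega^n]}\right)^{-1/n}\right).
\end{equation*}

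\textbf{Step 4 (Assemble).} Plugging the bound of Step 3 into Step 2, using that $(\Phi^*)^{-1}$ is increasing and that multiplicative constants inside the exponential can be absorbed by shrinking $\delta$ (since $h$ is eventually monotone), we obtain
\begin{equation*}
\|\mathbf{1}_E\|_{L^{\Phi^*}(\omega_0)}\le\frac{C}{(\Phi^*)^{-1}\!\left(e^{\delta\cdot(\Ca_\theta(E)/[\omega^n])^{-1/n}}\right)}=\frac{C}{h\!\left(\delta\cdot(\Ca_\theta(E)/[\omega^n])^{-1/n}\right)},
\end{equation*}
by the defining relation $h(t)=(\Phi^*)^{-1}(e^t)$ in (\ref{N function-4}). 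Substituting back into Step 1 yields (\ref{measure estimate: 1}).

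\textbf{Main obstacle.} Steps 1, 2, and 4 are essentially bookkeeping once one is comfortable with Orlicz norms. The real content is Step 3, specifically the inequality $M_E^n\cdot\Ca_\theta(E)\gtrsim[\omega^n]$ on the capacity-supremum side; the case where $\Ca_\theta(E)$ is not small (so $M_E$ may be $\le 1$) needs to be handled separately by noting that the claimed estimate becomes trivial after enlarging the constant. The only subtlety in adapting Ko\l{}odziej's original $L^p$ version \cite{Ko98} is ensuring that the final expression cleanly recombines into $h$ rather than $\Phi^{-1}$, which is the reason for introducing $h$ through the complementary function $\Phi^*$ in the first place.
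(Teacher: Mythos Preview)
Your proof is correct and rests on the same three ingredients as the paper---Young/H\"older in Orlicz spaces, Tian's $\alpha$-invariant, and the capacity--extremal-function comparison from \cite{FGS20}---but the organization differs. You pass through the intermediate quantity $\vol(E)$: Orlicz H\"older reduces the problem to $\|\mathbf{1}_E\|_{L^{\Phi^*}}$, which you express via $\vol(E)$, which is then bounded exponentially via the $\alpha$-invariant. The paper instead applies Young's inequality directly with the test function $v=e^{-\delta(\mathcal{V}_{\theta,E}-\sup\mathcal{V}_{\theta,E}-1)}$: the pointwise inequality $(\Phi^*)^{-1}(v)\cdot F/b\le\Phi(F/b)+v$ is integrated over $E$, and the $\alpha$-invariant controls $\int_X v\,\omega_0^n$ in a single stroke, so $\vol(E)$ never appears. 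The paper's route is marginally slicker and sidesteps the constant-absorption issue you flag (the ``$+1$'' in their choice of $v$ handles the case split from \cite[Lemma~3.1]{FGS20} cleanly); your route is more modular and makes the role of each ingredient more transparent.
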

\begin{proof}
Let $\delta>$ be a small positive constant that will be determined later. Put $v=e^{-\delta(\mathcal{V}_{\theta,E}-\sup\mathcal{V}_{\theta,E}-1)}$. Then, for any $b>\|F\|_{L^\Phi}$,
$$(\Phi^*)^{-1}(v)\cdot \frac{F}{b}\le\Phi\left(\frac{F}{b}\right)+v.$$
Using that $\mathcal{V}_{\theta,E}\le 0$ on $E$ and integrating over $E$ we get
\begin{eqnarray*}
\frac{(\Phi^*)^{-1}(e^{\delta\cdot(\sup\mathcal{V}_{\theta,E}+1))})}{b}\cdot\frac{1}{[\omega_0^n]}\cdot\int_E F\omega_0^n&\le&\frac{1}{[\omega_0^n]}\cdot\int_E (\Phi^*)^{-1}(v)\cdot \frac{F}{b}\omega_0^n\\
&\le&1+\frac{1}{[\omega_0^n]}\cdot\int_X e^{-\delta(\mathcal{V}_{\theta,E}-\sup\mathcal{V}_{\theta,E}-1)}\omega_0^n\\
&\le& C(\omega_0,A).
\end{eqnarray*}
Noticing that $\mathcal{V}_{\theta,E}\in\Psh(A\omega_0)$, one can take one $\delta$ smaller than that in (\ref{alpha invariant}). Then, taking infimum over $b$ we get
$$\frac{1}{[\omega_0^n]}\cdot\int_E F\omega_0^n\le C(\omega_0,A)\cdot\|F\|_{L^\Phi}\cdot \frac{1}{h(\delta\cdot(\sup\mathcal{V}_{\theta,E}+1))}.$$
Finally, applying the argument in \cite[Lemma 3.1]{FGS20} to get the lower bound of $\sup\mathcal{V}_{\theta,E}$. If $\sup\mathcal{V}_{\theta,E}\ge 1$,
$$\sup\mathcal{V}_{\theta,E}\ge \left(\frac{\Ca_{\theta}(E)}{[\omega^n]}\right)^{-1/n};$$
otherwise,
$$\sup\mathcal{V}_{\theta,E}\ge\left(\frac{\Ca_{\theta}(E)}{[\omega^n]}\right)^{-1/n}-1.$$
In either case we can conclude the required estimate.
\end{proof}

\subsection{$L^\infty$-estimate for CMA}

We give a sketched proof of Theorem \ref{infinity estimate: 2} following the arguments in Fu-Guo-Song \cite{FGS20}.

First of all, introduce two families of constants
\begin{equation}
\vartheta(s)=\bigg(\frac{\Ca_{\theta}\{\varphi_0-\mathcal{V}_\theta<-s\}}{[\omega^n]}\bigg)^{1/n},
\end{equation}
and
\begin{equation}
\varrho(s)=\frac{\vol_\omega\{\varphi_0-\mathcal{V}_\theta<-s\}}{[\omega^n]}=\frac{1}{[\omega_0^n]}\int_{\{\varphi_0-\mathcal{V}_\theta<-s\}}F\omega_0^n.
\end{equation}
The capacity estimate (\ref{capacity estimate: 11}) gives a universal estimate
\begin{equation}\label{capacity estimate: 14}
r\cdot\vartheta(s+r)\le \varrho^{1/n}(s),\hspace{0.5cm}\forall~s\ge 0,~~\mbox{ and }1\ge r\ge 0.
\end{equation}
Combining with (\ref{measure estimate: 1}) we get
\begin{equation}\label{capacity estimate: 15}
r\cdot\vartheta(s+r)\le C(\omega_0,A)\cdot\|F\|_{L^\Phi(\omega_0)}^{1/n}\cdot \frac{1}{h^{1/n}\left(\delta\cdot\vartheta(s)^{-1}\right)},\hspace{0.5cm}\forall~s\ge 0,\,\mbox{ and }1\ge r\ge 0.
\end{equation}

\begin{lemma}\label{capacity estimate: lemma}
There is a universal estimate, for any $s>0$,
\begin{equation}\label{capacity estimate: 16}
\varrho(s)\le C(\omega_0,A)\cdot\|F\|_{L^\Phi(\omega_0)}\cdot\frac{1}{\left(\Phi^*\right)^{-1}(s)}.
\end{equation}
In particular, $\varrho(s)\rightarrow 0$ and $\vartheta(s)\rightarrow 0$ as $s\rightarrow\infty$.
\end{lemma}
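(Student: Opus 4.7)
My plan is to mirror the proof of Lemma \ref{measure estimate: 0}, but restricted to the sublevel set $E_s := \{\varphi_0-\mathcal{V}_\theta<-s\}$ and using the $\theta$-plurisubharmonic function $\varphi_0$ itself in place of the envelope $\mathcal{V}_{\theta,E}$. The elementary observation that unlocks the estimate is that $\mathcal{V}_\theta\le 0$ pointwise (since $\mathcal{V}_\theta$ is admissible in its own defining supremum, so $\sup\mathcal{V}_\theta=0$); hence on $E_s$ we have $-\varphi_0>s-\mathcal{V}_\theta\ge s$, which converts the sublevel-set condition into a pointwise exponential bound on $E_s$.

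Concretely, fix $\delta=\delta(\omega_0,A)\in(0,1]$ as in (\ref{alpha invariant}) and set $v=e^{-\delta\varphi_0}$. Tian's $\alpha$-invariant applied to $\varphi_0\in\Psh(\theta)$ with $\sup\varphi_0=0$ gives $\frac{1}{[\omega_0^n]}\int_X v\,\omega_0^n\le C(\omega_0,A)$. For any $b>\|F\|_{L^\Phi(\omega_0)}$, Young's inequality (\ref{N function-2}) applied with $t=F/b$ and $s'=(\Phi^*)^{-1}(v)$ yields
$$(\Phi^*)^{-1}(v)\cdot\frac{F}{b}\le\Phi(F/b)+v.$$
Integrating over $E_s$, invoking the Luxemburg-norm bound $\frac{1}{[\omega_0^n]}\int_X\Phi(F/b)\omega_0^n\le 1$ and the $\alpha$-invariant, and factoring out $(\Phi^*)^{-1}(e^{\delta s})$ on the left using $v\ge e^{\delta s}$ on $E_s$ together with monotonicity of $(\Phi^*)^{-1}$, I obtain after letting $b\searrow\|F\|_{L^\Phi(\omega_0)}$ the bound
$$\varrho(s)\le\frac{C(\omega_0,A)\cdot\|F\|_{L^\Phi(\omega_0)}}{(\Phi^*)^{-1}(e^{\delta s})}.$$
To reach the form stated in the lemma I invoke the sub-homogeneity $(\Phi^*)^{-1}(\lambda t)\ge\lambda\cdot(\Phi^*)^{-1}(t)$ valid for $\lambda\in(0,1]$, which follows from the convexity of $\Phi^*$ together with $\Phi^*(0)=0$, combined with the trivial $e^{\delta s}\ge\delta s$. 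This gives $(\Phi^*)^{-1}(e^{\delta s})\ge\delta\cdot(\Phi^*)^{-1}(s)$, and the factor $1/\delta$ is absorbed into the constant. For the ``In particular'' clause, since $\Phi^*$ is an $N$-function its inverse tends to $\infty$, so $\varrho(s)\to 0$; then plugging into (\ref{capacity estimate: 14}) with $r=1$ yields $\vartheta(s+1)\le\varrho(s)^{1/n}\to 0$.

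I do not anticipate a genuine obstacle: the argument is essentially a verbatim adaptation of the proof of Lemma \ref{measure estimate: 0}, with $\varphi_0$ playing the role of $\mathcal{V}_{\theta,E}$ and the sublevel-set structure supplying the exponential weight. The only mildly non-routine bookkeeping is the concavity step that converts $(\Phi^*)^{-1}(e^{\delta s})$ into $(\Phi^*)^{-1}(s)$, but this is a standard property of $N$-function inverses.
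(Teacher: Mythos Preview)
Your proof is correct, but it follows a somewhat different path from the paper's own argument. You mirror the proof of Lemma~\ref{measure estimate: 0} directly: you apply Young's inequality with the weight $(\Phi^*)^{-1}(e^{-\delta\varphi_0})$, use the pointwise bound $-\varphi_0>s$ on $E_s$ to extract the factor $(\Phi^*)^{-1}(e^{\delta s})$, and then weaken this via concavity of $(\Phi^*)^{-1}$ to reach $(\Phi^*)^{-1}(s)$. The paper instead writes $F\le\epsilon b\bigl(\Phi(F/b)+\Phi^*(\epsilon^{-1})\bigr)$ with a free parameter $\epsilon$, bounds the $\omega_0^n$-measure of the sublevel set by $C(\omega_0,A)/s$ using the $\alpha$-invariant, and then optimizes by choosing $\Phi^*(\epsilon^{-1})=s$. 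Your route is arguably more natural here since it parallels the preceding lemma and yields the sharper intermediate bound $\varrho(s)\le C\|F\|_{L^\Phi}/h(\delta s)$ before the final weakening, whereas the paper's route isolates the set-measure estimate $\frac{1}{[\omega_0^n]}\vol_{\omega_0}\{\varphi_0-\mathcal{V}_\theta<-s\}\le C/s$ as a separate step, which may be of independent use. Both arguments rest on the same two ingredients, Young's inequality and the $\alpha$-invariant, applied in a different order.
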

\begin{proof}
By direct calculation, for any $\epsilon>0$ and $b>\|F\|_{L^\Phi}$,
$$F=\epsilon b\cdot\frac{F}{\epsilon b}\le\epsilon b\cdot\left(\Phi(F/b)+\Phi^*(\epsilon^{-1})\right).$$
Then, for any $s>0$,
\begin{eqnarray*}
\varrho(s)=\frac{1}{[\omega_0^n]}\int_{\{\varphi_0-\mathcal{V}_\theta<-s\}}F\omega_0^n
\le\frac{\epsilon b}{[\omega_0^n]}\int_{\{\varphi_0-\mathcal{V}_\theta<-s\}}\left(\Phi(F/b)+\Phi^*(\epsilon^{-1})\right)\omega_0^n.
\end{eqnarray*}
Taking infimum over $b$ gives
\begin{eqnarray*}
\varrho(s)\le \|F\|_{L^\Phi}\cdot\epsilon\cdot\left(1+\Phi^*(\epsilon^{-1})\cdot\frac{1}{[\omega_0^n]}\cdot\int_{\{\varphi_0-\mathcal{V}_\theta<-s\}}\omega_0^n\right),
\end{eqnarray*}
where
\begin{eqnarray*}
\frac{1}{[\omega_0^n]}\cdot\int_{\{\varphi_0-\mathcal{V}_\theta<-s\}}\omega_0^n&\le&\frac{1}{s}
\cdot\frac{1}{[\omega_0^n]}\cdot\int_{\{\varphi_0-\mathcal{V}_\theta<-s\}}(\mathcal{V}_{\theta}-\varphi_0)\omega_0^n\\
&\le&\frac{C(\omega_0,A)}{s}
\cdot\frac{1}{[\omega_0^n]}\cdot\int_{\{\varphi_0-\mathcal{V}_\theta<-s\}}e^{\delta(\mathcal{V}_{\theta}-\varphi_0)}\omega_0^n\\
&\le&\frac{C(\omega_0,A)}{s}
\cdot\frac{1}{[\omega_0^n]}\cdot\int_{\{\varphi_0-\mathcal{V}_\theta<-s\}}e^{\delta(\sup\varphi_0-\varphi_0)}\omega_0^n\\
&\le&\frac{C(\omega_0,A)}{s},
\end{eqnarray*}
where we used that $\sup\varphi_0=\sup(\varphi_0-\mathcal{V}_\theta)=0$ and that $\mathcal{V}_\theta\le 0$ over $X$. Here $\delta=\delta(\omega_0)$ is any positive constant smaller than that in (\ref{alpha invariant}). Substituting into the previous formula gives
$$\varrho(s)\le\epsilon\cdot\|F\|_{L^\Phi}\cdot\left(1+C(\omega_0,A)\cdot\frac{\Phi^*(\epsilon^{-1})}{s}\right).$$
Taking $\epsilon$ so that $\Phi^*(\epsilon^{-1})=s$ we get (\ref{capacity estimate: 16}).
\end{proof}

In the following, we will apply above estimates to the De Giorgi iteration. By (\ref{capacity estimate: 15}), there are positive constants $C_0=C_0(\omega_0,A)$ and $\delta=\delta(\omega_0,A)$ such that, for any $s>0$ and $0< r\le 1$,
\begin{eqnarray*}
\vartheta(s+r)&\le& C_0\cdot\|F\|_{L^\Phi(\omega_0)}^{1/n}\cdot r^{-1}\cdot \frac{1}{h^{1/n}\left(\delta\cdot\vartheta(s)^{-1}\right)}\\
&\le&\frac{\vartheta(s)}{2}\cdot r^{-1}\cdot C_1\cdot\|F\|_{L^\Phi(\omega_0)}^{1/n}\cdot\frac{\delta\cdot\vartheta(s)^{-1}}{h^{1/n}\left(\delta\cdot\vartheta(s)^{-1}\right)}
\end{eqnarray*}
for some $C_1=C_1(\omega_0,A)$. It follows that
\begin{equation*}
\vartheta(s+r)\le\frac{\vartheta(s)}{2}
\end{equation*}
whenever the number
\begin{equation*}
r=C_1\cdot\|F\|_{L^\Phi(\omega_0)}^{1/n}\cdot\frac{\delta\cdot\vartheta(s)^{-1}}{h^{1/n}\left(\delta\cdot\vartheta(s)^{-1}\right)}\le 1.
\end{equation*}
By the monotonicity of $h$
\begin{equation*}
\frac{t}{h^{1/n}(t)}\le 2\int_{t/2}^t\frac{ds}{h^{1/n}(s)}.
\end{equation*}
The assumption (\ref{integrability 0}) shows that the left hand side tends to zero as $t\rightarrow\infty$. Take some big constant $s_0=s_0(\omega_0,A,\Phi,\|F\|_{L^\Phi})$ so that
\begin{equation}\label{e4-10}
2C_1\cdot\|F\|_{L^\Phi(\omega_0)}^{1/n}\cdot\int_{\frac{\delta\cdot\vartheta(s_0)^{-1}}{2}}^\infty\frac{ds}{h^{1/n}(s)}\le 1.
\end{equation}
Then, define a sequence $s_j$ by induction
\begin{equation*}
s_{j+1}=\inf\left\{s_i+r\,\bigg|\,\vartheta(s_j+r)\le\frac{1}{2}\vartheta(s_j)\right\}
\end{equation*}
and put $t_j=\delta\cdot\vartheta(s_j)^{-1}=2^{j}t_0$. Then,
\begin{equation*}
s_{j+1}-s_j\le C_1\cdot\|F\|_{L^\Phi(\omega_0)}^{1/n}\cdot\frac{t_j}{h^{1/n}(t_j)}.
\end{equation*}
Thus, by assumption (\ref{e4-10}),
\begin{eqnarray}\label{Kolo assumption: 2}
\sum_{j=0}^\infty(s_{j+1}-s_j)\le 2C_1\cdot\|F\|_{L^\Phi(\omega_0)}^{1/n}\cdot\int_{\frac{t_0}{2}}^\infty\frac{dt}{h^{1/n}(t)}.
\end{eqnarray}

Finally, by a variable change $t=\log s$, the integral on the right hand side
$$\int_{\frac{t_0}{2}}^\infty\frac{dt}{h^{1/n}(t)}=\int_{\frac{\log s_0}{2}}^\infty\frac{ds}{s\cdot \left[(\Phi^*)^{-1}(s)\right]^{1/n}}\le \mathbf{I},$$
where $s_0=e^{t_0}$ is a constant depending on $\omega_0,A,\Phi,n,\|F\|_{L^\Phi}$. It follows that $\vartheta(s)=0$ whenever $s>s_0+2C_1\cdot\|F\|_{L^\Phi(\omega_0)}^{1/n}\cdot\mathbf{I}$. The proof of the Theorem \ref{infinity estimate: 2} is complete.\qed

\subsection{Stability estimate of psh functions around CMA}\label{proof of stability of cma}

In this subsection we shall prove Theorem \ref{Stability theorem 1}, a stability property for plurisubharmonic (psh for short) functions around $\varphi_0$, which solves the CMA (\ref{MA: smooth case}). All the constants $C_i$ in the proof depend only on $\omega_0,A,\Phi,n,K$. We may assume that $K\ge 1$.

Let $c>0$ be a constant to be determined later. Define the subset
$$\Omega_s=\{\psi_\delta>\varphi_0+c+s\}.$$
We are going to show that $\Omega_s=\emptyset$ whenever $s$ is sufficiently large. The argument is divided into several steps.

{\bf Step 1.} We have
\begin{equation}\label{e5-09}
\frac{1}{[\omega^n]}\int_{\Omega_0}\omega^n\le \frac{2K}{(\Phi^*)^{-1}\left(\frac{c}{\delta}\right)}.
\end{equation}

Same as in the proof of formula (\ref{capacity estimate: 16}). First of all, for any $b>\|F\|_{L^\Phi}$, on $\Omega_0$,
$$vF\le \Phi(F/b)+\delta^{-1}\cdot(\psi_\delta-\varphi_0)$$
where $v=b^{-1}\cdot (\Phi^*)^{-1}\left(\frac{\psi_\delta-\varphi_0}{\delta}\right)$. Integrating over $\Omega_0$ gives
$$\frac{1}{[\omega_0^n]}\cdot\int_{\Omega_0}vF\omega_0^n\le 1+\frac{1}{[\omega_0^n]}\int_X\frac{(\psi_\delta-\varphi_0)_+}{\delta}\omega_0^n\le 2.$$
Taking infimum over $b>\|F\|_{L^\Phi}$ gives
$$\frac{1}{[\omega_0^n]}\cdot\int_{\Omega_0}(\Phi^*)^{-1}\left(\frac{\psi_\delta-\varphi_0}{\delta}\right)\cdot F\omega_0^n\le 2\|F\|_{L^\Phi}.$$
Noticing that the function $\psi_\delta-\varphi_0\ge c$ on $\Omega_0$, together with the monotonicity of $\Phi^*$, we get the required estimate.

{\bf Step 2.} By approximation we may assume that all the functions in the proof are smooth; see \cite{GuPhToWa21,GuPhToWa24} for a full discussion to the approximation process. Let $\phi_s\in\Psh(\theta)$ be the solution to the modify CMA, for $s\ge 0$,
\begin{equation}
\frac{\left(\theta+\sqrt{-1}\partial\bar{\partial}\phi_s\right)^n}{[\omega^n]}=\frac{\left(\psi_\delta-\varphi_0-c-s\right)_+}{B_s}\cdot F\cdot\frac{\omega_0^n}{[\omega_0^n]},
\end{equation}
with
$$\sup(\phi_s-\mathcal{V}_\theta)=\sup\phi_s=0,$$
where $B_s$ is a normalizing constant
$$B_s=\frac{1}{[\omega_0^n]}\int_X\left(\psi_\delta-\varphi_0-c-s\right)_+\cdot F\cdot\omega_0^n.$$

On the set $\Omega_s$ we construct the test function
\begin{equation*}
H=(\varphi_0-\phi_s)-\frac{1}{2(n+1)}\cdot\frac{\left(\psi_\delta-\varphi_0-c-s\right)^{\frac{n+1}{n}}}{B_s^{\frac{1}{n}}}.
\end{equation*}
It satisfies
\begin{eqnarray*}
\triangle_\omega H&\le& n-\tr(\theta+\sqrt{-1}\partial\bar{\partial}\phi_s)-\frac{1}{2}\cdot\left(\frac{\psi_\delta-\varphi_0-c-s}{B_s}\right)^{1/n}\\
&\le&n-\frac{1}{2}\cdot\left(\frac{\psi_\delta-\varphi_0-c-s}{B_s}\right)^{1/n};
\end{eqnarray*}
and
$$H\ge -\|\varphi_0-\mathcal{V}_\theta\|_{\infty},\hspace{0.5cm}\mbox{on}~\partial\Omega_s.$$
By the maximum principle we obtain
$$H\ge-C_0-C(n)\cdot B_s,\hspace{0.5cm}\mbox{in}~\Omega_s.$$
It follows that, on the domain $\Omega_s$,
\begin{eqnarray}
\frac{\left(\psi_\delta-\varphi_0-c-s\right)^{\frac{n+1}{n}}}{B_s^{\frac{1}{n}}}&\le& 2(n+1)\cdot\left[\left(\varphi_0-\phi_s\right)+C_0\right]+C(n)B_s\nonumber\\
&\le&2(n+1)\cdot\left(-\phi_s\right)+C_1+C(n)B_s.\label{e5-10}
\end{eqnarray}

{\bf Step 3.} Bound $B_s$ uniformly as follows, for any $s>0$,
\begin{eqnarray*}
B_s&=&\frac{1}{[\omega_0^n]}\int_X\left(\psi_\delta-\varphi_0-c-s\right)_+\cdot F\cdot\omega_0^n\\
&\le&\frac{1}{[\omega_0^n]}\int_{\Omega_s}\left(\psi_\delta-\varphi_0\right)_+\cdot F\cdot\omega_0^n\\
&\le&\frac{1}{[\omega_0^n]}\int_{\Omega_s}\left(\mathcal{V}_\theta-\varphi_0\right)\cdot F\cdot\omega_0^n\\
&\le&\frac{C_0 }{[\omega_0^n]}\int_{\Omega_s} F\cdot\omega_0^n\\
&\le&C_0,
\end{eqnarray*}
where we used that $\psi_\delta\le\mathcal{V}_\theta$. 

{\bf Step 4.} There is a Moser-Trudinger inequality, by (\ref{e5-10}),
\begin{equation}
\frac{1}{[\omega_0^n]}\int_{\Omega_s}\exp\left\{c_0\frac{\left(\psi_\delta-\varphi_0-c-s\right)^{\frac{n+1}{n}}}{B_s^{\frac{1}{n}}}\right\}\omega_0^n\le C_3,
\end{equation}
for some $c_0>0$ depending on the $\alpha$-invariant of $\omega_0$. Put
$$\xi(s)=\frac{\left(\psi_\delta-\varphi_0-c-s\right)^{\frac{n+1}{n}}}{B_s^{\frac{1}{n}}}.$$
Then, compute as in Step 1,
\begin{equation}\label{e5-11}
\frac{1}{[\omega_0^n]}\int_{\Omega_s}h(c_0\cdot\xi(s))\cdot F\omega_0^n\le (C_3+1)\cdot\|F\|_{L^\Phi}.
\end{equation}
where $h(t)=(\Phi^*)^{-1}(e^t)$ as defined before.

{\bf Step 5.} Improve the upper bound of $B_s$. Introduce the volume function
$$\varrho(s)=\frac{1}{[\omega_0^n]}\int_{\Omega_s}F\omega_0^n.$$
(\ref{e5-09}) shows that $\varrho(s)\le\varrho(0)$ can be sufficiently small.

Applying the conjugate $h^*$ to get
$$c_0\xi=\epsilon\cdot c_0\xi\cdot\frac{1}{\epsilon}\le\epsilon\cdot\left(h(c_0\xi)+h^*\left(\frac{1}{\epsilon}\right)\right).$$
Integrating over $\Omega_s$ gives, by (\ref{e5-11}),
\begin{equation*}
\frac{1}{[\omega_0^n]}\int_{\Omega_s}\xi\cdot F\omega_0^n\le\frac{\epsilon}{c_0}\cdot\left((C_3+1)\cdot\|F\|_{L^\Phi}+h^*\left(\frac{1}{\epsilon}\right)\cdot\varrho(s)\right).
\end{equation*}
Taking $\epsilon$ so that
$$h^*\left(\frac{1}{\epsilon}\right)\cdot\varrho(s)=4K.$$
Notice that $\varrho(s)\le\Phi^*(1)\cdot\|F\|_{L^\Phi}$. After justifying a larger constant depending on $\Phi^*(1)$ which does not effect the argument below, we may assume that $4K/\varrho(s)$ is big enough so that $h^*$ can be defined. We then conclude that
\begin{equation}\label{e5-12}
\frac{1}{[\omega_0^n]}\int_{\Omega_s}\xi(s)\cdot F\omega_0^n\le \frac{C_4}{(h^*)^{-1}\left(\frac{4K}{\varrho(s)}\right)}.
\end{equation}
By the definition of $\xi(s)$ and the H\"older inequality,
$$B_s\le\left(\frac{1}{[\omega_0^n]}\int_{\Omega_s}B_s^{\frac{1}{n}}\cdot\xi(s)\cdot F\omega_0^n\right)^{\frac{n}{n+1}}\cdot \varrho^{\frac{1}{n+1}}$$
It follows
\begin{equation}\label{e5-13}
B_s\le C_4\cdot\frac{\varrho(s)^{\frac{1}{n}}}{(h^*)^{-1}\left(\frac{4K}{\varrho(s)}\right)}.
\end{equation}

{\bf Step 6.} Iteration formula
\begin{equation}\label{e5-14}
r\varrho(r+s)\le C_4\cdot\frac{\varrho(s)^{\frac{1}{n}}}{(h^*)^{-1}\left(\frac{4K}{\varrho(s)}\right)},\hspace{0.5cm}\forall ~~ r,s\ge 0,
\end{equation}
which follows by a direct calculation
\begin{eqnarray*}
r\varrho(r+s)=\frac{r}{[\omega_0^n]}\int_{\Omega_{s+r}}F\omega_0^n
\le\frac{1}{[\omega_0^n]}\int_{\Omega_{s+r}}(\psi_\delta-\varphi_0-c-s)\cdot F\omega_0^n
\le B_s.
\end{eqnarray*}

{\bf Step 7.} $\varrho(s)$ vanishes for certain controllable $s>0$.

Argue as in the proof of the $L^\infty$-estimate, Theorem \ref{infinity estimate: 2}. Rewrite the iteration formula (\ref{e5-14}) as
$$\varrho(s+r)\le\frac{\varrho(s)}{2}\cdot 2C_4\cdot (4K)^{-\frac{n-1}{n}}\cdot r^{-1}\cdot\frac{\left(\frac{4K}{\varrho(s)}\right)^{\frac{n-1}{n}}}{(h^*)^{-1}\left(\frac{4K}{\varrho(s)}\right)}$$
which is less than $\varrho(s)/2$ whenever $r$ is taken to be
\begin{equation*}
r= 2C_4\cdot(4K)^{-\frac{n-1}{n}}\cdot\frac{\left(\frac{4K}{\varrho(s)}\right)^{\frac{n-1}{n}}}{(h^*)^{-1}\left(\frac{4K}{\varrho(s)}\right)}
=C_5\cdot \frac{\left(\frac{4K}{\varrho(s)}\right)^{\frac{n-1}{n}}}{(h^*)^{-1}\left(\frac{4K}{\varrho(s)}\right)}.
\end{equation*}

We start the iteration at $s_0=0$. Suppose $\varrho(0)\neq 0$. Define a sequence $s_j$ by induction
\begin{equation}
s_{j+1}=\inf\left\{s_i+r\,\bigg|\,\varrho(s_j+r)\le\frac{1}{2}\varrho(s_j)\right\}.
\end{equation}
Obviously, $\varrho(s_j)\neq 0$ for any $j$, and
\begin{equation}
s_{j+1}-s_j\le C_5\cdot\frac{\left(\frac{4K}{\varrho(s_j)}\right)^{\frac{n-1}{n}}}{(h^*)^{-1}\left(\frac{4K}{\varrho(s_j)}\right)}.
\end{equation}
Define a sequence of times $t_j=4K/\varrho(s_j)=2^j\cdot t_0$. By the monotonicity of $h^*$,
\begin{equation*}
\frac{\left(\frac{4K}{\varrho(s_j)}\right)^{\frac{n-1}{n}}}{(h^*)^{-1}\left(\frac{4K}{\varrho(s_j)}\right)}
=\frac{t_j^{\frac{n-1}{n}}}{(h^*)^{-1}\left(t_j\right)}
\le 2\int_{t_j/2}^{t_j}\frac{dr}{t^{1/n}\cdot (h^*)^{-1}(t)}
\end{equation*}
for any $j\ge 0$. Thus,
\begin{eqnarray*}
\sum_{j=0}^\infty(s_{j+1}-s_j)\le 2C_5\cdot\int_{\frac{t_0}{2}}^\infty\frac{dt}{t^{1/n}\cdot (h^*)^{-1}(t)}.
\end{eqnarray*}
In particular, $\varrho(s)=0$, for any $s\ge 2C_5\cdot\int_{\frac{t_0}{2}}^\infty\frac{dt}{t^{1/n}\cdot (h^*)^{-1}(t)}$.

{\bf Step 8.} Choices of $c$ and $\delta$.

From the discussion we have that
\begin{equation*}
\psi_\delta\le\varphi_0+c+2C_5\cdot\int_{\frac{t_0}{2}}^\infty\frac{dt}{t^{1/n}\cdot (h^*)^{-1}(t)}
\end{equation*}
where, by (\ref{e5-09}),
\begin{equation*}
t_0=\frac{4K}{\varrho(0)}\ge 2\cdot(\Phi^*)^{-1}\left(\frac{c}{\delta}\right).
\end{equation*}
It follows that
\begin{equation*}
\psi_\delta\le\varphi_0+c+2C_5\cdot\int_{(\Phi^*)^{-1}\left(\frac{c}{\delta}\right)}^\infty\frac{dt}{t^{1/n}\cdot (h^*)^{-1}(t)}
\end{equation*}
Put $\tau=(\Phi^*)^{-1}\left(\frac{c}{\delta}\right)$. Then,
\begin{equation}\label{modulus estimate of CMA: 00}
\psi_\delta\le\varphi_0+\delta\cdot\Phi^*(\tau)+2C_5\cdot\int_{\tau}^\infty\frac{dt}{t^{1/n}\cdot (h^*)^{-1}(t)}
\end{equation}
According to Definition \ref{Definition-hbar}, then we get the required estimate (\ref{stability estimate}) by picking $C=2C_5+1$.\qed



\section{Green's function and geometric bound}\label{Green function}

Let $X$ be a K\"ahler manifold of dimension $n\ge 2$.

Let $\varphi_0$ be a solution to the CMA {\rm(\ref{MA: smooth case})} where $F\in L^\Phi$ for some $N$-function which satisfies the first integrability condition (\ref{integrability 0}). Let $\omega=\theta+\sqrt{-1}\partial\bar{\partial}\varphi_0$. We assume that $\omega$ is a K\"ahler metric.

This section is devoted to proving the uniform estimate for the Green function associated with $\omega$. This is achieved by adapting the argument of Guo-Phong-Song-Sturm \cite{GuPhSoSt24-1} together with formal calculation in Orlicz space. The references for this section are \cite{GuPhSt24} and \cite{GuPhSoSt24-1}.

The constant $C$ in this section depend on $\omega_0,A,K,n,\Phi$ unless otherwise specified.

The following lemma is a natural extension of \cite[Lemma 2]{GuPhSt24} and \cite[Lemma 5.1]{GuPhSoSt24-1}. The proof in \cite[Lemma 2]{GuPhSt24}\cite[Lemma 5.1]{GuPhSoSt24-1} makes only use of the $L^{\infty}$-estimates, so we omit the details.

\begin{lemma}
Let $v\in L^1(X,\omega^n)$ be a function that satisfies $\int_Xv\omega^n=0$ and
$$v\in C^2(\overline{\Omega}_0),~~~\Delta_{\omega}v\geq-a~~in~~\Omega_0$$
for some $a>0$ and $\Omega_s=\{v>s\}$ is the super-level set of $v.$ Then there is a uniform constant $C>0$ such that
\begin{equation}
\sup_Xv\le C\cdot\big(a+\fint_X|v|\omega^n\big).
\end{equation}
\end{lemma}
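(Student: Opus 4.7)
The plan is to adapt the auxiliary complex Monge--Amp\`ere argument of Guo--Phong--Sturm \cite{GuPhSt24} and Guo--Phong--Song--Sturm \cite{GuPhSoSt24-1}, using our Theorem \ref{infinity estimate: 2} as a black-box $L^\infty$-estimate. The goal is to bound $\sup_X v$ via the maximum principle on the super-level sets $\Omega_s=\{v>s\}$, with the auxiliary CMA playing the role of a Sobolev inequality that is not available \emph{a priori} for the possibly singular metric $\omega$.

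Assume without loss of generality that $M:=\sup_X v>0$, and set $L:=\fint_X|v|\omega^n$. For each $s\in(0,M)$, let $A_s:=\fint_X(v-s)_+\omega^n$; note that $A_s\leq L$ and that $(v-s)_+$ is supported in $\Omega_s\subset\Omega_0$, where $v$ is $C^2$. Solve the regularised auxiliary CMA
\[
\big(\omega+\sqrt{-1}\partial\bar\partial\psi_s\big)^n = \frac{(v-s)_+ + \varepsilon}{A_s+\varepsilon}\,\omega^n,\qquad \sup_X\psi_s = 0,
\]
for a small $\varepsilon>0$. Applied with $\omega$ as the background K\"ahler metric, Theorem \ref{infinity estimate: 2} supplies a uniform bound $\|\psi_s\|_{L^\infty(X)}\leq C_1$ depending only on $n,L,a$ and $\omega_0$, once one verifies that the right-hand side has uniformly bounded Luxemburg norm in a suitable $N$-function class.

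On $\overline{\Omega_s}$ consider the test function
\[
\Psi_s := \psi_s + \Lambda\cdot\frac{(v-s)^{(n+1)/n}}{(A_s+\varepsilon)^{1/n}}, \qquad \Lambda := \frac{n^2}{2(n+1)\,a}.
\]
Using the identity $\Delta_\omega\psi_s = \tr_\omega\omega_{\psi_s} - n$ together with the arithmetic--geometric mean inequality $\tr_\omega\omega_{\psi_s}\geq n\,[(v-s)_+/(A_s+\varepsilon)]^{1/n}$ and the hypothesis $\Delta_\omega v\geq-a$, one computes
\[
\Delta_\omega\Psi_s \;\geq\; \frac{n}{2}\cdot\frac{(v-s)^{1/n}}{(A_s+\varepsilon)^{1/n}} - n \qquad \text{on } \Omega_s.
\]
At an interior maximum point of $\Psi_s$ we have $\Delta_\omega\Psi_s\leq 0$, which forces $(v-s)\leq 2^n(A_s+\varepsilon)$ there, and hence $\Psi_s\leq 2^{n+1}\Lambda(A_s+\varepsilon)$ at that point. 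Since $\Psi_s=\psi_s\leq 0$ on $\partial\Omega_s$, the maximum principle extends this bound to all of $\overline{\Omega_s}$. Combining with $\psi_s\geq-C_1$ and sending $\varepsilon\downarrow 0$ yields
\[
(v-s)^{(n+1)/n} \;\leq\; C_2\,A_s^{(n+1)/n} + C_3\,a\,A_s^{1/n} \qquad\text{on }\Omega_s.
\]
Letting $s\downarrow 0$, using $A_s\leq L$, and applying Young's inequality to balance the two terms on the right-hand side then yields the desired bound $\sup_X v\leq C(a+L)$.

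The principal obstacle is the second step: securing a uniform $L^\infty$ bound on $\psi_s$ that is independent of $M$ and depends only on $L$ and $a$. Since $(v-s)_+$ is \emph{a priori} merely $L^1$, one must exploit the weak subharmonicity $\Delta_\omega v\geq-a$ to upgrade the integrability of $(v-s)_+/A_s$ into a suitable Orlicz class, via the Moser--Trudinger / Tian $\alpha$-invariant mechanism underlying Theorem \ref{infinity estimate: 2}. Once this is in place, the test-function computation of Step 3 is a direct adaptation of the maximum-principle argument already carried out in the proof of Theorem \ref{Stability theorem 1}; accordingly, as the paper indicates, we omit its routine details following \cite{GuPhSt24,GuPhSoSt24-1}.
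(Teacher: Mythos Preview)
Your overall strategy is the right one (auxiliary CMA plus comparison), but Step~2 is a genuine gap and your proposed fix does not work. You need a uniform bound $\|\psi_s\|_\infty\le C_1$ independent of $\sup_X v$; however, Theorem~\ref{infinity estimate: 2} cannot be applied ``with $\omega$ as the background metric'' without the constants depending on $\omega$ itself, and if you translate to the fixed background $(\theta,\omega_0)$ the right-hand side becomes $\frac{(v-s)_+}{A_s}\,F$, whose $L^\Phi(\omega_0)$-norm is only controlled by $\frac{M-s}{A_s}\,\|F\|_{L^\Phi}$---circular. Your suggestion to ``upgrade the integrability of $(v-s)_+/A_s$'' via the Tian $\alpha$-invariant is incorrect: the $\alpha$-invariant gives exponential integrability for $\theta$-psh functions, whereas $v$ merely satisfies $\Delta_\omega v\ge -a$, a condition that carries no Orlicz improvement over $L^1$ (think of a Green function).

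The argument in \cite{GuPhSt24,GuPhSoSt24-1} avoids this entirely by a different organization. One solves the auxiliary CMA with background $\theta$ (not $\omega$) and takes the test function $H=(\varphi_0-\phi_s)-c_n\,(v-s)^{(n+1)/n}/A_s^{1/n}$. The minimum principle then needs only $\|\varphi_0-\mathcal{V}_\theta\|_\infty$, which \emph{is} available from Theorem~\ref{infinity estimate: 2} applied to the original equation; no $L^\infty$ bound on $\phi_s$ is used. This yields on $\Omega_s$ the pointwise inequality $(v-s)^{(n+1)/n}\le C\,A_s^{1/n}\big((-\phi_s)+a\,A_s+C_0\big)$, with $(-\phi_s)$ still unbounded. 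One now exploits that $\phi_s$ is $\theta$-psh (so the $\alpha$-invariant applies to \emph{it}, not to $v$): this gives a Moser--Trudinger bound for $-\phi_s$, hence an integral inequality of the form $r\,\varrho(s+r)\le C\,\varrho(s)^{1+\frac{1}{n}}$ for the level-set mass, and a De~Giorgi iteration finishes. In short, the missing idea is to shift the unbounded term from $v$ to the $\theta$-psh auxiliary potential and then iterate, rather than to seek a direct $L^\infty$ bound on $\psi_s$.
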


Combining the lemma with the argument of \cite[Proposition 2.1]{GuPhSoSt24-2}, we have the following lemma.

\begin{lemma}\label{L-infty-for-Laplace}
Let $v\in C^2(X)$ be a function that satisfies $\int_Xv\omega^n=0$ and $|\Delta_{\omega}v|\le1$, then there is a uniform constant $C>0$ such that
\begin{equation}
\sup_Xv\le C.
\end{equation}
\end{lemma}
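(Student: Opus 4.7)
The plan is to apply the preceding lemma to both $v$ and $-v$, and then close the resulting integral estimate via an auxiliary complex Monge-Amp\`ere argument. Since $|\Delta_\omega v|\le 1$ holds on all of $X$, we have in particular $\Delta_\omega v\ge -1$ on $\{v>0\}$ and $\Delta_\omega(-v)\ge -1$ on $\{v<0\}$; both $v$ and $-v$ are $C^2$ on $X$ and have zero mean with respect to $\omega^n$. Invoking the preceding lemma with $a=1$ in each case yields
\begin{equation*}
\sup_X v \;\le\; C\Bigl(1 + \fint_X |v|\,\omega^n\Bigr), \qquad \sup_X(-v) \;\le\; C\Bigl(1 + \fint_X |v|\,\omega^n\Bigr),
\end{equation*}
so that $\|v\|_{L^\infty(X)}\le C\bigl(1+\fint_X|v|\,\omega^n\bigr)$.

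The remaining and main task is to bound $\fint_X|v|\,\omega^n$ by a constant depending only on $\omega_0,A,K,n,\Phi$. The naive substitution $\fint|v|\le\|v\|_{L^\infty}$ only returns $\|v\|_\infty\le C+C\|v\|_\infty$, which does not close since the constant $C$ from the preceding lemma is not in general less than $1$. To break this circularity I would follow the argument of \cite[Proposition 2.1]{GuPhSoSt24-2}: using $\int_Xv\,\omega^n=0$ one has $\fint|v|=2\fint v_+$, so it suffices to control $\fint v_+$. Introduce an auxiliary complex Monge-Amp\`ere equation of the form
\begin{equation*}
(\omega+\sqrt{-1}\partial\bar\partial\psi)^n \;=\; \tfrac{1}{B}\,v_+\cdot\omega^n, \qquad \sup_X\psi=0,
\end{equation*}
where $B$ is the appropriate normalizing constant, apply Theorem \ref{infinity estimate: 2} to bound $\|\psi\|_{L^\infty}$ in terms of the Luxemburg norm of the right-hand side, and then use a maximum-principle test function patterned on Step~2 in the proof of Theorem \ref{Stability theorem 1}, e.g.\ $H=-\psi - c_n\,v_+^{(n+1)/n}/B^{1/n}$, to obtain a quantitative inequality of the form $\sup_X v \le C\bigl(1+B^{1/n}\bigr)$. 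Combined with the trivial bound $B\le \sup_X v\cdot[\omega^n]$ and a short rearrangement, this closes the loop and yields the required absolute bound on $\sup_X v$.

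The principal obstacle is exactly this final closure step: the two preceding-lemma inequalities alone are not enough, and one must use the PDE structure more deeply to extract an absolute $L^1$ bound on $v$ from the pair of hypotheses $|\Delta_\omega v|\le 1$ and $\int_X v\,\omega^n=0$. The Orlicz-space $L^\infty$-estimates established in Section~3 are essential here, playing the same role that they play in the proof of the preceding lemma; the zero-mean condition is what allows one to pass from the one-sided bound in that lemma to the two-sided bound on $\sup_X v$ stated in the present lemma.
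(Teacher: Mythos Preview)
Your high-level plan matches the paper's: apply the preceding lemma to both $v$ and $-v$, then invoke the argument of \cite[Proposition~2.1]{GuPhSoSt24-2} to close. The difficulty is that your concrete closing mechanism does not work as stated. For the auxiliary equation $(\omega+\sqrt{-1}\partial\bar\partial\psi)^n=\tfrac{1}{B}\,v_+\,\omega^n$, the density with respect to $\omega_0^n$ is $\tfrac{v_+}{B}F$, and its Luxemburg norm is only controlled by $\tfrac{\sup_X v}{B}\cdot K$, which is \emph{not} bounded independently of $v$. Theorem~\ref{infinity estimate: 2} then bounds $\|\psi\|_{L^\infty}$ by a quantity that depends on $\sup_X v/B$ through both of its terms (note that the first constant $C(\omega_0,\Phi;A,K,n)$ already depends on the norm bound $K$, and from the proof this dependence is governed by the tail of $\int h^{-1/n}$, hence is not benign). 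Consequently the test-function step does not deliver $\sup_X v\le C(1+B^{1/n})$ with a \emph{uniform} $C$, and your rearrangement using $B\le\sup_X v$ cannot close the loop. In effect, the auxiliary equation you write down is the same one that sits inside the proof of the preceding lemma, so by itself it cannot improve $\sup v\le C\bigl(1+\fint|v|\bigr)$ to anything sharper.

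The point you correctly isolate --- extracting an absolute $L^1$ bound on $v$ --- requires genuinely exploiting the \emph{two-sided} hypothesis $|\Delta_\omega v|\le 1$, not merely running the one-sided argument twice. What is needed is an auxiliary Monge--Amp\`ere equation whose right-hand side is \emph{a priori} bounded independently of $v$ (so that Theorem~\ref{infinity estimate: 2} really gives a uniform potential estimate), after which one can compare $v$ to that uniformly bounded potential; the two-sided Laplacian bound is exactly what makes such a bounded density available. Your sketch does not carry out this step, and without it the circularity you yourself flag remains unresolved.
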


Let $G(x,y)$ be the Green's function of $(X,\omega)$. The following lemma is a consequence of the Lemma \ref{L-infty-for-Laplace}, see \cite[Lemma 5.4]{GuPhSoSt24-1} for a proof. It gives the basic Green function estimate in Orlicz space.

\begin{lemma}\label{L1-Green}
Assume as before, the Green's function satisfies
\begin{equation}
\int_X|G(x,\cdot)|\omega^n\le C,~~~and~~\inf_XG(x,\cdot)\ge-\frac{C}{[\omega^n]}
\end{equation}
for some $C>0$ independent of $x\in X$.
\end{lemma}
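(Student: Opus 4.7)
The plan is to prove both estimates by testing the Green's representation formula against carefully chosen smooth functions whose Laplacians are uniformly bounded, so that Lemma \ref{L-infty-for-Laplace} becomes applicable. Write $V=[\omega^n]$ and recall that $G(x,\cdot)$ is normalized by $\fint_X G(x,\cdot)\,\omega^n=0$ and solves $-\Delta_\omega G(x,\cdot)=\delta_x-V^{-1}$ in the distributional sense. This gives the representation
$$f(x)-\fint_X f\,\omega^n=-\int_X G(x,y)\,\Delta_\omega f(y)\,\omega^n(y)$$
for any $f\in C^2(X)$.

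For the $L^1$ bound, I would fix $x\in X$, let $h_\epsilon$ be a smooth approximation of $\operatorname{sgn}(G(x,\cdot))$ with $|h_\epsilon|\le 1$, and solve the Poisson equation $\Delta_\omega v_\epsilon=h_\epsilon-\bar h_\epsilon$ normalized by $\fint v_\epsilon=0$. Since $|\Delta_\omega v_\epsilon|\le 2$, Lemma \ref{L-infty-for-Laplace} applied to $\pm v_\epsilon/2$ yields $\|v_\epsilon\|_\infty\le C$. Plugging $v_\epsilon$ into the representation formula and using $\fint G(x,\cdot)=0$ to drop the $\bar h_\epsilon$ contribution, we get
$$v_\epsilon(x)=-\int_X G(x,y)\,h_\epsilon(y)\,\omega^n(y).$$
Passing $\epsilon\to 0$ and applying dominated/monotone convergence yields $\int_X|G(x,y)|\,\omega^n(y)=-v(x)\le C$.

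For the pointwise lower bound, set $m:=-V\inf_y G(x,y)$ and assume WLOG $m>0$ (attained at some minimizer $y_0$). The strategy is again to approximate the indicator of a set on which $-G(x,\cdot)$ is comparable to $m/V$, and use Lemma \ref{L-infty-for-Laplace} on the solution of the associated Poisson problem. Concretely, for a ball $B=B_\omega(y_0,r)$ (with $r$ to be chosen), let $h$ be a smooth bump approximating $\chi_B$ with $|h|\le 1$, and solve $\Delta_\omega v=h-\bar h$ with $\fint v=0$. Lemma \ref{L-infty-for-Laplace} gives $\|v\|_\infty\le C$, while the representation formula evaluates to $v(x)=-\int_B G(x,y)\,\omega^n(y)$. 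If $r$ can be chosen so that $G(x,\cdot)\le -m/(2V)$ on all of $B$ and $|B|\ge cV$ for a uniform $c>0$, then
$$\frac{m}{2V}\cdot cV\le |v(x)|\le C,$$
forcing $m\le C'$, which is exactly $\inf_y G(x,y)\ge -C'/V$.

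The main obstacle lies in this last step: quantifying the modulus of $G(x,\cdot)$ near $y_0$ on a ball of definite volume fraction. For this one exploits that $G(x,\cdot)$ is a solution of the linear equation $\Delta_\omega G=V^{-1}$ off the diagonal, combined with the $L^1$ bound of Step 1, to produce a volume estimate for the super-level sets of $-G(x,\cdot)$. This is exactly the iteration carried out in \cite[Lemma 5.4]{GuPhSoSt24-1}, whose details the authors therefore omit; the PDE input needed (the boundedness of $\Delta_\omega v$ in terms of $\|h\|_\infty$) is precisely the content of Lemma \ref{L-infty-for-Laplace}, which in turn rests on the $L^\infty$-estimate for solutions of CMA (Theorem \ref{infinity estimate: 2}).
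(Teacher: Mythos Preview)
Your duality argument for the $L^1$-bound is correct and is essentially the standard proof: solve $\Delta_\omega v_\epsilon=h_\epsilon-\bar h_\epsilon$, invoke Lemma~\ref{L-infty-for-Laplace} to get $\|v_\epsilon\|_\infty\le C$, and read off $\int_X|G(x,\cdot)|\,\omega^n\le C$ from the Green representation.

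The pointwise lower bound, however, has a genuine gap. Your bump-function strategy requires showing that $G(x,\cdot)\le -m/(2V)$ on a geodesic ball of volume $\ge cV$ for a uniform $c>0$; you never establish this, and it is \emph{not} what \cite[Lemma~5.4]{GuPhSoSt24-1} does. The level-set iteration you allude to is the content of the \emph{preceding} unlabeled lemma here (the analog of \cite[Lemma~5.1]{GuPhSoSt24-1}), not of Lemma~5.4. Once that lemma is in hand the lower bound is a one-line application: set $v=-G(x,\cdot)$, note that $\Omega_0=\{v>0\}=\{G(x,\cdot)<0\}$ avoids the pole $x$, so $v\in C^2(\overline{\Omega_0})$ with $\Delta_\omega v=-[\omega^n]^{-1}$ there; the unlabeled lemma with $a=[\omega^n]^{-1}$ and $\fint_X|v|\,\omega^n=[\omega^n]^{-1}\int_X|G(x,\cdot)|\,\omega^n\le C[\omega^n]^{-1}$ (from your Step~1) gives
\[
\sup_X\bigl(-G(x,\cdot)\bigr)\le C\Bigl([\omega^n]^{-1}+C[\omega^n]^{-1}\Bigr),
\]
i.e.\ $\inf_X G(x,\cdot)\ge -C'/[\omega^n]$. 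No bump functions, no volume lower bounds on balls, and no further iteration are needed.
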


In principle more restrictions on $F$, or equivalently the $N$-function $\Phi$, gives more precise estimate of the Green function. In the following we will see how the integrability conditions affect the geometry of the K\"ahler metric $\omega$.

Define a modification $\mathcal{G}(x,y)$
$$\mathcal{G}(x,y)=G(x,y)-\inf_{x,y\in X}G(x,y)+\frac{1}{[\omega^n]}>0.$$
Fix a base point $x_0\in X$. Let $\mathcal{G}(y)=\mathcal{G}(x_0,y)$ and put $\widehat{\mathcal{G}}=[\omega^n]\cdot\mathcal{G}$. Then $\widehat{\mathcal{G}}\ge 1$.

\subsection{Refined integral estimate of Green function and diameter bound}\label{Refined integral estimate of Green function}

We begin with some technical assumptions. Let $\Phi_1$ be another $N$-function such that
\begin{itemize}
\item[(C1)] $\Phi_1$ is smaller than $\Phi$ in the sense that
\begin{equation}\label{growth condition for Phi 1}
\lim_{t\rightarrow\infty}\frac{\Phi(t)}{\Phi_1(t)}=\infty;
\end{equation}
\item[(C2)] the associated $h_1(t)=(\Phi_1^*)^{-1}(e^t)$ satisfies the first integrability condition (\ref{integrability 0});
\item[(C3)] the growth condition (\ref{growth condition for N-function: 1}) holds for some $L$, namely
\begin{equation}\label{growth condition for Phi 3}
\frac{\Phi_1(s\cdot t)}{s\cdot t}\le L\cdot\left(\frac{\Phi_1(s)}{s}+\frac{\Phi_1(t)}{t}\right),\qquad \forall ~~s,t\ge 1,
\end{equation}
for some $L>0$.
\end{itemize}

Due to the discussion in Section \ref{construction-of-N-1}, condition (\ref{growth condition for Phi 3}) implies that $\Phi_1(t)\le t(\log t)^\beta$ for some $\beta>0$ whenever $t$ is sufficiently large.

Let $E_1$ be increasing positive functions on $[1,\infty)$ such that
\begin{equation}\label{integrability comparison 2}
E_1(t)^{n-1}=\frac{t^2}{\Phi_1(t)},\qquad\forall ~~t\ge 1.
\end{equation}
By monotonicity of $\Phi_1(t)/t$, we have
\begin{equation*}
E_1(t)^{n-1}\le \frac{t}{\Phi_1(1)},\qquad\forall ~~t\ge 1.
\end{equation*}

Let $\Phi_2$ be another positive increasing function  on $[0,\infty)$ such that
\begin{equation}\label{integrability comparison 1}
\Phi_2\left(\frac{\Phi_1(t)}{t}\right)=\frac{\Phi(t)}{t},\qquad\forall~~ t\ge 1.
\end{equation}
Then
$$\lim_{s\rightarrow\infty}\frac{\Phi_2(s)}{s}=\lim_{t\rightarrow\infty}\frac{\Phi(t)}{\Phi_1(t)}=\infty.$$
So $\Phi_2^*(t)$ is well-defined for any $t\ge 0$. Define $E_2(t)$
\begin{equation}\label{integrability comparison 4}
E_2(t)=\inf\left\{s\,\bigg|\,\frac{\Phi_2^*(s^n)}{s}\ge t\right\}.
\end{equation}
Put $E(t)=\min(E_1(t),E_2(t))$ for any $t\ge 1$, which satisfies
\begin{equation}\label{integrability comparison 0}
E(t)^{n-1}\le\frac{t^2}{\Phi_1(t)}\le\frac{t}{\Phi_1(1)},
\end{equation}
and
\begin{equation}\label{integrability comparison 00}
\Phi_2^*\left(E(t)^n\right)\le t E(t).
\end{equation}

The function $E$ is related to the integrability of the Green function. Notice that it is uniquely determined by $\Phi_1$.

\begin{lemma}
For any $t\ge 1$,
\begin{equation}\label{integrability comparison 4.5}
\Phi_1\left(E(t)^n\right) \le C\cdot  E(t)\cdot t,
\end{equation}
for some constant $C$ depending only on the values $\Phi_1(1)$ and $L$.
\end{lemma}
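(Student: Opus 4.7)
The plan is to exploit the defining inequality $\Phi_1(t)\,E(t)^{n-1}\le t^2$ (immediate from $E(t)\le E_1(t)$ and (\ref{integrability comparison 2})) in conjunction with the growth condition (C3), which expresses that $\Phi_1$ is almost linear. Write $s=E(t)$ and $\Psi(u)=\Phi_1(u)/u$. The hypothesis (C3) rephrases as $\Psi(uv)\le L(\Psi(u)+\Psi(v))$ for $u,v\ge 1$, while $\Psi$ is non-decreasing since $\Phi_1$ is an $N$-function. Roughly, (C3) provides the analogue of a polynomial comparison: iterating yields constants $A_k$ defined by $A_1=1$, $A_{k+1}=L(A_k+1)$, such that
\[
\Phi_1(s^n)\le A_n\,s^{n-1}\,\Phi_1(s),\qquad s\ge 1.
\]
For $s\le 1$ the same estimate with $A_n=1$ is immediate from the monotonicity of $\Psi$. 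So it suffices to prove $s^{n-1}\Phi_1(s)\le C\,s\,t$.

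Next I would split into sub-cases according to whether $s\le t$ or $s>t$. When $s\le t$, the monotonicity $\Psi(s)\le\Psi(t)$ gives $\Phi_1(s)\le s\,\Phi_1(t)/t$, and combining with $s^{n-1}\Phi_1(t)\le t^2$ yields $s^{n-1}\Phi_1(s)\le s^n\Phi_1(t)/t\le s\,t$ directly. The remaining sub-case $s>t$ is the only delicate one. Here I would first observe that, since $\Phi_1(t)\ge\Phi_1(1)\,t$, we have $s^{n-1}\le t/\Phi_1(1)$, which together with $s>t\ge 1$ forces the ratio $s/t$ to be bounded above by the constant $M:=\Phi_1(1)^{-1/(n-1)}$. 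I would then apply (C3) to the decomposition $s=t\cdot(s/t)$ to obtain
\[
\Phi_1(s)\le L\,s\,\Phi_1(t)/t+L\,t\,\Phi_1(M).
\]
The first summand reproduces $L\,s\,t$ exactly as in the previous sub-case, while the second contributes $L\,\Phi_1(M)\,s^{n-1}t\le L\Phi_1(M)\Phi_1(1)^{-1}\,t^2\le L\Phi_1(M)\Phi_1(1)^{-1}\,s\,t$ using $t<s$. All constants depend only on $L$, $\Phi_1(1)$, and $n$.

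The main (and essentially only) obstacle is precisely the sub-case $s>t$, where the monotone comparison $\Psi(s)\le\Psi(t)$ reverses. The crux is that this regime is automatically rigid: the inequality $s^{n-1}\le t/\Phi_1(1)$ pins $s/t$ to a universal constant, so a single further application of (C3) reduces this case back to the monotone situation up to a controllable additive error that is easily absorbed into $C\,s\,t$.
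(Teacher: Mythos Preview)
Your proof is correct, but it takes a genuinely different route from the paper's. The paper writes $E(t)^n \le E(t)\cdot t\cdot \frac{t}{\Phi_1(t)}$ as a three-factor product and applies (C3) twice directly to that decomposition, bounding each resulting term $\Psi(E(t))$, $\Psi(t)$, $\Psi(t/\Phi_1(t))$ via monotonicity of $\Psi=\Phi_1(\cdot)/(\cdot)$ together with $E(t)\le t/\Phi_1(1)$ and $t/\Phi_1(t)\le 1/\Phi_1(1)$; no case split appears. You instead first iterate (C3) on equal factors to obtain $\Phi_1(s^n)\le A_n\,s^{n-1}\Phi_1(s)$, and then control $s^{n-1}\Phi_1(s)$ by the dichotomy $s\le t$ versus $s>t$, the latter being tamed by the rigidity $(s/t)^{n-1}\le t^{2-n}/\Phi_1(1)\le 1/\Phi_1(1)$. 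The paper's argument is shorter and avoids the case analysis; yours is more modular (separating the power reduction from the comparison with $t$) and is more explicit about the regime $s\le 1$, where the paper tacitly applies (C3) to factors that may fall below $1$. Note that both proofs end up with constants involving specific values of $\Phi_1$ beyond $\Phi_1(1)$ (your $\Phi_1(M)$, the paper's $\Phi_1(1/\Phi_1(1))$), so the phrase ``depending only on $\Phi_1(1)$ and $L$'' should in either case be read as allowing finitely many values of $\Phi_1$ determined by $\Phi_1(1)$, $L$, and $n$.
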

\begin{proof}
By direct calculation, using monotonicity of $\Phi_1(t)/t$ and (\ref{integrability comparison 0}) and an iteration of (\ref{growth condition for Phi 3}),
\begin{eqnarray*}
\frac{\Phi_1\left(E(t)^n\right)}{E(t)^n} \le \frac{\Phi_1\left(E(t)\cdot t\cdot\frac{t}{\Phi_1(t)}\right)}{E(t)\cdot t\cdot\frac{t}{\Phi_1(t)}}
\le L^2\left(\frac{\Phi_1(E(t))}{E(t)}+\frac{\Phi_1(t)}{t}+\frac{\Phi_1\left(\frac{t}{\Phi_1(t)}\right)}{\frac{t}{\Phi_1(t)}}\right).
\end{eqnarray*}
Using that $E(t)\le t/\Phi_1(1)$ we get
$$\frac{\Phi_1(E(t))}{E(t)}\le L\left(\frac{\Phi_1(t)}{t}+\frac{\Phi_1\left(\frac{1}{\Phi_1(1)}\right)}{\frac{1}{\Phi_1(1)}}\right);$$
using monotonicity of $\Phi_1(t)/t$ again gives the bound of the third term on the right hand side
$$\frac{\Phi_1\left(\frac{t}{\Phi_1(t)}\right)}{\frac{t}{\Phi_1(t)}}\le \frac{\Phi_1\left(\frac{1}{\Phi_1(1)}\right)}{\frac{1}{\Phi_1(1)}}.$$
Multiplying $E(t)^n$ onto the first formula gives
$$\Phi_1\left(E(t)^n\right)\le L^2\cdot E(t)\cdot t\cdot \frac{t}{\Phi_1(t)}\cdot \left(C+2L^3\frac{\Phi_1(t)}{t}\right) \le C\cdot  E(t)\cdot t.$$
The proof is complete.
\end{proof}

\begin{proposition}\label{L1+epsilon-Green}
Assume as above. There exists $C=C(\omega_0,\Phi,\Phi_1,E; A,K,L,K,n)$, independent of $x_0$, such that
\begin{equation}\label{L1+-Green-function}
\int_X\mathcal{G}\cdot E(\widehat{\mathcal{G}})~\omega^n\le C.
\end{equation}
\end{proposition}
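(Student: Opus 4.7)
The plan is to introduce an auxiliary complex Monge-Amp\`ere equation whose density carries the weight $E(\widehat{\mathcal{G}})^n$, apply the $L^\infty$-estimate of Theorem \ref{infinity estimate: 2} in the smaller Orlicz space $L^{\Phi_1}$, and close the desired integral bound through an AM-GM pointwise inequality combined with the Green's identity against $\mathcal{G}$. Concretely, let $u\in\Psh(\theta)$ with $\sup u=0$ solve the CMA (\ref{MA: smooth case}) whose right-hand density is $\tilde F = c\,E(\widehat{\mathcal{G}})^n\,F$ with normalization $c = [\omega^n]/\int_X E(\widehat{\mathcal{G}})^n\,\omega^n$; equivalently $(\theta+\sqrt{-1}\partial\bar\partial u)^n = c\,E(\widehat{\mathcal{G}})^n\,\omega^n$. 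Any a priori integrability issue near the pole of $\mathcal{G}$ is handled by truncating $\widehat{\mathcal{G}}\mapsto\min(\widehat{\mathcal{G}},M)$ and letting $M\to\infty$ at the end.

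The crucial analytic ingredient is the pointwise decoupling of $E$ and $F$ inside $\Phi_1$. Using the growth condition (C3) with $s=E(\widehat{\mathcal{G}})^n$ and $t=F$ gives $\Phi_1(E^n F)\leq L\bigl(F\,\Phi_1(E^n)+E^n\,\Phi_1(F)\bigr)$. The first summand is controlled by (\ref{integrability comparison 4.5}): $\Phi_1(E^n)\leq C\,E\,\widehat{\mathcal{G}}$. For the second, Young's inequality (\ref{N function-2}) for the conjugate pair $(\Phi_2,\Phi_2^*)$, combined with the duality $\Phi_2(\Phi_1(F)/F)=\Phi(F)/F$ from (\ref{integrability comparison 1}) and $\Phi_2^*(E^n)\leq \widehat{\mathcal{G}}\,E$ from (\ref{integrability comparison 00}), yields $E^n\,\Phi_1(F)\leq \Phi(F)+F\,\widehat{\mathcal{G}}\,E$. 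Integrating and using $F\,\omega_0^n=([\omega_0^n]/[\alpha^n])\,\omega^n$ produces
\begin{equation*}
\frac{1}{[\omega_0^n]}\int_X \Phi_1(E^n F)\,\omega_0^n \leq C_1 \int_X \mathcal{G}\,E(\widehat{\mathcal{G}})\,\omega^n + C_1,
\end{equation*}
hence $\|\tilde F\|_{L^{\Phi_1}}\leq c\,C_1\bigl(1+\int_X \mathcal{G}\,E\,\omega^n\bigr)$.

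Apply Theorem \ref{infinity estimate: 2} to $u$, valid thanks to (C2), and combine with the analogous bound for $\varphi_0$ to get $\|u-\varphi_0\|_\infty \leq C+C\,\|\tilde F\|_{L^{\Phi_1}}^{1/n}\,\mathbf{I}_1$, where $\mathbf{I}_1$ denotes the integral (\ref{integrability 0}) computed with $\Phi_1$. The AM-GM inequality on the auxiliary equation gives $\tr_\omega(\theta+\sqrt{-1}\partial\bar\partial u)\geq n\,c^{1/n}\,E$, whence $\Delta_\omega(u-\varphi_0)\geq n\,c^{1/n}\,E(\widehat{\mathcal{G}})-n$. Integrating against $\mathcal{G}>0$ via the Green's identity $\int_X \mathcal{G}\,\Delta_\omega v\,\omega^n=\frac{1}{[\omega^n]}\int_X v\,\omega^n-v(x_0)$, and using Lemma \ref{L1-Green} to control $\int_X \mathcal{G}\,\omega^n$, we obtain
\begin{equation*}
n\,c^{1/n}\int_X \mathcal{G}\,E\,\omega^n \leq 2\|u-\varphi_0\|_\infty + n\int_X \mathcal{G}\,\omega^n \leq C + C\,c^{1/n}\bigl(1+\int_X \mathcal{G}\,E\,\omega^n\bigr)^{1/n}.
\end{equation*}

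Dividing by $c^{1/n}$ and using $1/c^{1/n}=\bigl(\frac{1}{[\omega^n]}\int_X E^n\omega^n\bigr)^{1/n}$ reduces the problem to controlling the average of $E^n$. The inequality $E^n\leq E\cdot E_1^{n-1}=E\cdot \widehat{\mathcal{G}}^2/\Phi_1(\widehat{\mathcal{G}})$ from (\ref{integrability comparison 2}), together with the monotonicity of $t/\Phi_1(t)$ (which tends to $0$ by (C3), since $\Phi_1(t)/t$ is increasing to $\infty$), allows a split at $\widehat{\mathcal{G}}=T$ giving $\frac{1}{[\omega^n]}\int_X E^n\omega^n \leq E(T)^n + (T/\Phi_1(T))\int_X \mathcal{G}\,E(\widehat{\mathcal{G}})\,\omega^n$. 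Two applications of the elementary Young bound $x^{1/n}\leq \epsilon x+C_\epsilon$ then absorb the $\int \mathcal{G}\,E\,\omega^n$ terms into the left-hand side, yielding the claimed estimate. The main obstacle I anticipate is the pointwise decoupling of the second paragraph: condition (C3) is essential for separating $E$ and $F$ under $\Phi_1$, and the tailoring of (\ref{integrability comparison 4.5}), (\ref{integrability comparison 00}) and (\ref{integrability comparison 1}) is precisely what makes the dominating terms $F\,E\,\widehat{\mathcal{G}}$ and $\Phi(F)$ reappear in forms convertible back to $\int\mathcal{G}\,E\,\omega^n$ and the uniform $L^\Phi$-bound on $F$.
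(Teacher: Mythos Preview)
Your overall architecture---auxiliary CMA weighted by $E(\widehat{\mathcal G})^n$, decoupling via (C3), AM--GM, and Green's identity---mirrors the paper's, but there is a genuine gap in the closing step. You invoke Theorem~\ref{infinity estimate: 2} in the form
\[
\|u-\varphi_0\|_\infty \le C + C\,\|\tilde F\|_{L^{\Phi_1}}^{1/n}\,\mathbf I_1
\]
with the first $C$ independent of $\|\tilde F\|_{L^{\Phi_1}}$. That is not what the theorem says: the constant $C(\omega_0,\Phi_1;A,K,n)$ carries an implicit dependence on $K\ge\|\tilde F\|_{L^{\Phi_1}}$ through the threshold $s_0$ chosen in the iteration (see (\ref{e4-10})). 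For a slow-growth $\Phi_1$ such as $g_0g_1^n\cdots g_{\ell+1}^q$, one checks from $h_1(t)\sim t^n g_1(t)^n\cdots g_\ell(t)^q$ that $s_0$ must be taken at least doubly exponential in $K^{1/n}$. Since your Orlicz bound reads $\|\tilde F\|_{L^{\Phi_1}}\le cC_1(1+J)$ with $J=\int_X\mathcal G\,E(\widehat{\mathcal G})\,\omega^n$, substituting back gives an inequality of the type $c^{1/n}J\le \exp\exp\bigl(C(1+J)^{1/n}\bigr)$, which does not bound $J$; the subsequent split at level $T$ and Young-type absorption cannot recover from this.

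The paper sidesteps this circularity by building the normalization by $J$ \emph{into the density itself}: it sets $\widehat E^n=1+E(\widehat{\mathcal G})^n/J$ rather than $E(\widehat{\mathcal G})^n$. Then every occurrence of $E(\widehat{\mathcal G})\,\widehat{\mathcal G}$ in the estimates for $\Phi_1(\widehat E^n F)$ appears divided by $J$, and since $\fint E(\widehat{\mathcal G})\,\widehat{\mathcal G}\,F\,\omega_0^n = J$, these ratios are uniformly bounded. Consequently $\|\widehat E^n F\|_{L^{\Phi_1}}\le C$ \emph{independently of $J$}, Theorem~\ref{infinity estimate: 2} gives a uniform $\|\psi-\varphi_0\|_\infty$, and the Green's-formula step closes directly as $J^{(n-1)/n}\le\int_X\mathcal G\,\widehat E\,\omega^n\le C$. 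The missing idea in your argument is precisely this pre-normalization; without it the $L^{\Phi_1}$ norm of the auxiliary density is not controlled uniformly, and the $L^\infty$ estimate does not feed back usefully.
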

\begin{proof}
We follow closely the proof of \cite[Lemma 5.5]{GuPhSoSt24-1} and \cite[Proposition 4.1]{ZwZy25+}. By approximation we may assume that $J=\int_X\mathcal{G}\cdot E(\widehat{\mathcal{G}})~\omega^n$ is bounded. It is obvious that
$$J\ge E(1)\int_X\mathcal{G}\omega^n\ge E(1).$$
The aim is to show a uniform upper bound of $J$.

Define $\widehat{E}=\left(1+\frac{E(\widehat{\mathcal{G}})^n}{J}\right)^{1/n}$ and consider the auxiliary CMA
\begin{equation}\label{cma-psi-k}
\frac{(\theta+\sqrt{-1}\partial\bar{\partial}\psi)^n}{[\omega^n]}=\frac{\widehat{E}^n}{B}\cdot F\cdot\frac{\omega_0^n}{[\omega_0^n]},
\end{equation}
with $\sup_X\psi=0$, where $B$ is a constant. By (\ref{integrability comparison 0}), the principal term of the measure admits an upper bound
\begin{equation}\label{integrability comparison 6}
\widehat{E}^n=1+\frac{E(\widehat{\mathcal{G}})^n}{J}
\le 1+\frac{E(\widehat{\mathcal{G}})\cdot\widehat{\mathcal{G}}}{J}\cdot\frac{\widehat{\mathcal{G}}}{\Phi_1(\widehat{\mathcal{G}})}
\le 1+\frac{E(\widehat{\mathcal{G}})\cdot\widehat{\mathcal{G}}}{E(1)\Phi_1(1)}.
\end{equation}
The constant $B$ has an obvious lower bound $B\ge 1$. It also admits an upper bound
$$B=\fint_X \widehat{E}^n\cdot F\omega_0^n\le 1+\frac{1}{J\Phi_1(1)}\cdot\fint_X E(\widehat{\mathcal{G}})\cdot\widehat{\mathcal{G}}\cdot F\omega_0^n
=1+\frac{1}{\Phi_1(1)}.$$

We next show that the measure of the CMA admits an $L^{\Phi_1}$ bound. By (\ref{growth condition for Phi 3}) and (\ref{integrability comparison 1}), for any $b>0$,
\begin{equation}\label{L Phi 1}
\fint_X\Phi_1\left(\frac{\widehat{E}^n F}{b}\right)\omega_0^n\le L\fint_X\left(\Phi_1(\widehat{E}^n)\cdot \frac{F}{b}+\widehat{E}^n\cdot\Phi_1\left(\frac{F}{b}\right)\right)\omega_0^n.
\end{equation}

We estimate two terms on by one. First of all, by monotonicity of $\Phi_1(t)/t$ and (\ref{growth condition for Phi 3}) and (\ref{integrability comparison 4.5}), we have
\begin{eqnarray*}
\frac{\Phi_1(\widehat{E}^n)}{\widehat{E}^n} &\le& \frac{\Phi_1\left(1+\frac{E(\widehat{\mathcal{G}})^n}{J}\right)}
{1+\frac{E(\widehat{\mathcal{G}})^n}{J}}\\
&\le& \frac{\Phi_1(2)}{2}+\frac{\Phi_1\left(2\frac{E(\widehat{\mathcal{G}})^n}{J}\right)}{2\frac{E(\widehat{\mathcal{G}})^n}{J}}\\
&\le& \frac{\Phi_1(2)}{2}+L^2\left(\frac{\Phi_1\left(\frac{2}{J}\right)}{\frac{2}{J}}
+\frac{\Phi_1\left(E(\widehat{\mathcal{G}})^n\right)}{E(\widehat{\mathcal{G}})^n}\right)\\
&\le& C\left(1+\frac{\widehat{\mathcal{G}}}{E(\widehat{\mathcal{G}})^{n-1}}\right),
\end{eqnarray*}
where $C$ is a constant depending on the functions $\Phi_1$ and the values $E(1)$ and $L$. By the definition of $\widehat{E}$,
\begin{eqnarray*}
\Phi_1(\widehat{E}^n) &\le& C\left(1+\frac{E(\widehat{\mathcal{G}})^n}{J}\right)\cdot\left(1+\frac{\widehat{\mathcal{G}}}{E(\widehat{\mathcal{G}})^{n-1}}\right) \\
&\le& C\left(1+\frac{E(\widehat{\mathcal{G}})\cdot\widehat{\mathcal{G}}}{J\Phi_1(1)}+\frac{\widehat{\mathcal{G}}}{E(1)^{n-1}}
+\frac{E(\widehat{\mathcal{G}})\cdot\widehat{\mathcal{G}}}{J}\right)\\
&\le&C\cdot\left(1+\widehat{\mathcal{G}}+\frac{E\left(\widehat{\mathcal{G}}\right)\cdot \widehat{\mathcal{G}}}{J}\right).
\end{eqnarray*}
Thus,
\begin{equation}\label{L Phi 2}
\fint_X \Phi_1(\widehat{E}^n)\cdot \frac{F}{b}\omega_0^n\le \frac{C}{b},
\end{equation}
for some $C$ depending on the function $\Phi_1$ and the values $E(1)$ and $L$.

As for the second integrand, we have, by (\ref{integrability comparison 1}) (\ref{integrability comparison 0}) (\ref{integrability comparison 00}) and the lower bound of $J$,
\begin{eqnarray*}
\widehat{E}^n\cdot\Phi_1\left(\frac{F}{b}\right)&=&\left(1+\frac{E(\widehat{\mathcal{G}})^n}{J}\right)\cdot\Phi_1\left(\frac{F}{b}\right)\\
&\le& \Phi_1\left(\frac{F}{b}\right)+\frac{1}{J}\cdot\left(\Phi_2^*\left(E(\widehat{\mathcal{G}})^n\right)+\frac{\Phi(F/b)}{F/b}\right)\cdot \frac{F}{b}\\
&\le& \Phi_1\left(\frac{F}{b}\right)+\frac{E(\widehat{\mathcal{G}})\cdot\widehat{\mathcal{G}}}{J}\cdot \frac{F}{b}+\frac{\Phi\left(\frac{F}{b}\right)}{E(1)}.
\end{eqnarray*}
By the comparison (\ref{growth condition for Phi 1}) we have
$$\Phi_1(t)\le C\left(1+\Phi(t)\right),\qquad\forall ~t\ge 0,$$
for some constant $C>0$. So,
\begin{equation*}\label{L Phi 3}
\fint_X \widehat{E}^n\cdot\Phi_1\left(\frac{F}{b}\right)\omega_0^n
\le \fint_X\left(C\left(1+\Phi\left(\frac{F}{b}\right)\right)+\frac{E(\widehat{\mathcal{G}})\cdot\widehat{\mathcal{G}}}{J}\cdot \frac{F}{b}
+\frac{\Phi\left(\frac{F}{b}\right)}{E(1)}\right)\omega_0^n.
\end{equation*}
Taking $b=1+\|F\|_{L^\Phi}$ we get
\begin{equation}\label{L Phi 3b}
\fint_X \widehat{E}^n\cdot\Phi_1\left(\frac{F}{b}\right)\omega_0^n \le 2C+1+\frac{1}{E(1)}.
\end{equation}
Combining with (\ref{L Phi 2}) gives
\begin{equation*}
\fint_X\Phi_1\left(\frac{\widehat{E}^n F}{b}\right)\omega_0^n \le C,
\end{equation*}
for some $C$ depending on the functions $\Phi_1,\Phi$ and the values $E(1)$ and $L$. In particular
\begin{equation}
\|\widehat{E}^n F\|_{L^{\Phi_1}}\le C\cdot (1+\|F\|_{L^\Phi}).
\end{equation}

By the $L^\infty$ estimate to CMA, Theorem \ref{infinity estimate: 2},
\begin{equation}
\|\psi-\mathcal{V}_\theta\|\le C(\omega_0,\Phi_1;E,A,L,K,n).
\end{equation}
Following a similar argument as \cite[Corollary 4.1]{GuPhSoSt24-1}, we have
\begin{equation}\label{difference of solution bd}
\sup_X|\psi-\varphi_0|\le C.
\end{equation}

Then consider the following linear equation on $X$
\begin{equation}\label{Laplace-u-k}
\Delta_{\omega}u=-\widehat{E}+\fint_X \widehat{E}\omega^n,\quad \fint_Xu\omega^n=0.
\end{equation}
Let
\begin{equation}\label{v-k}
v=(\psi-\varphi_0)-\fint_X(\psi-\varphi_0)\omega^n+B^{-\frac{1}{n}}u.
\end{equation}
Then by (\ref{v-k}) and (\ref{cma-psi-k}), we have
$$\Delta_{\omega}v\ge-n.$$
Applying the Green's formula, we have
$$v(x)=\fint_Xv\omega^n+\int_XG(x,\cdot)(-\Delta_{\omega}v)\omega^n\le C_1,\qquad \forall ~~x\in X.$$
Together with the $L^\infty$ estimate of $\varphi_0$ and $\psi$, say (\ref{difference of solution bd}), we obtain
$$\sup u\le C_2+C_1\cdot B^{1/n}.$$
Applying the Green's formula again, we have
$$u(x)=\int_X\mathcal{G}(x,\cdot)\left(\widehat{E}-\fint_X \widehat{E}\omega^n\right)\omega^n.$$
Hence,
\begin{eqnarray*}
\int_X\mathcal{G}(x,\cdot)\cdot \widehat{E}\omega^n
\le u(x)+C_3\fint_X \widehat{E}\omega^n
\le C_4+C_3\fint_X \left(1+\frac{E(\widehat{\mathcal{G}})}{J^{1/n}}\right)\omega^n.
\end{eqnarray*}
Using that $E\big(\widehat{\mathcal{G}}\big)\le\widehat{\mathcal{G}}/\Phi(1)$ we get
\begin{equation*}
\int_X\mathcal{G}(x,\cdot)\cdot \widehat{E}\omega^n \le C_4+C_3\left(1+\frac{1}{\Phi(1)E(1)^{1/n}}\right) \le C_5.
\end{equation*}
Taking $x=x_0$ we have
\begin{equation*}
J^{\frac{n-1}{n}}=\int_X\mathcal{G}\cdot\frac{E\big(\widehat{\mathcal{G}}\big)}{J^{1/n}}\omega^n \le \int_X\mathcal{G}\cdot \widehat{E}\omega^n \le C_5.
\end{equation*}
Here the constants $C_i=C_i(\omega_0,\Phi,\Phi_1,E;A,L,K,n)$ for each $i=1,2\cdots$. It gives a required bound of $J$.
\end{proof}

\begin{lemma}\label{Gradient-Green}
Suppose that the function $E$ satisfies a further integrability condition
\begin{equation}\label{integrability comparison 5}
\int_1^\infty\frac{dt}{tE(t)}<\infty.
\end{equation}
There exists $C=C(\omega_0,\Phi,\Phi_1,E; A,K,A_1,K,n)$, independent of $x_0$, such that
\begin{equation}\label{L1-gradient-Green}
\int_X|\nabla \mathcal{G}|\omega^n\le C.
\end{equation}
\end{lemma}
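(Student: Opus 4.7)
The plan is to combine the Cauchy-Schwarz inequality with the Green's representation formula, so that the gradient bound reduces to the weighted Green integral already controlled in Proposition~\ref{L1+epsilon-Green}.

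First I would introduce the auxiliary function
$$\eta(t) := \int_1^t \frac{ds}{s\,E(s)}, \qquad t\ge 1,$$
which by the new integrability condition \eqref{integrability comparison 5} is bounded, with limit $\eta(\infty)=\int_1^\infty \tfrac{ds}{sE(s)}<\infty$. Setting $u=\eta(\widehat{\mathcal G})$ gives a nonnegative, bounded function on $X$, smooth away from $x_0$ and extending continuously to $x_0$ with value $\eta(\infty)$. A direct computation gives
$$\nabla u \;=\; \eta'(\widehat{\mathcal G})\,\nabla \widehat{\mathcal G} \;=\; \frac{\nabla \mathcal G}{\mathcal G\cdot E(\widehat{\mathcal G})},$$
so that $\langle \nabla u,\nabla \mathcal G\rangle_\omega \;=\; |\nabla \mathcal G|^2/(\mathcal G\cdot E(\widehat{\mathcal G}))$.

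Next I would split by Cauchy-Schwarz:
$$\int_X |\nabla \mathcal G|\,\omega^n \;\le\; \Big(\int_X \frac{|\nabla \mathcal G|^2}{\mathcal G\cdot E(\widehat{\mathcal G})}\,\omega^n\Big)^{1/2}\cdot\Big(\int_X \mathcal G\cdot E(\widehat{\mathcal G})\,\omega^n\Big)^{1/2}.$$
The second factor is uniformly bounded by Proposition~\ref{L1+epsilon-Green}. For the first factor, integration by parts together with the distributional identity $-\Delta_\omega \mathcal G = \delta_{x_0}-[\omega^n]^{-1}$ yields
$$\int_X \frac{|\nabla \mathcal G|^2}{\mathcal G\cdot E(\widehat{\mathcal G})}\,\omega^n \;=\; \int_X \langle \nabla u,\nabla \mathcal G\rangle\,\omega^n \;=\; u(x_0) \;-\; \frac{1}{[\omega^n]}\int_X u\,\omega^n \;\le\; \eta(\infty).$$
Combining these two bounds gives $\int_X|\nabla \mathcal G|\,\omega^n \le C$, with $C$ depending only on $\eta(\infty)$ and the constant from Proposition~\ref{L1+epsilon-Green}.

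The one delicate point is the integration by parts near the pole $x_0$. I would justify it by excising a small ball $B_\epsilon(x_0)$, integrating by parts on $X\setminus B_\epsilon(x_0)$, and then letting $\epsilon\to 0$. Using the standard asymptotics $\mathcal G\sim c\,d(x_0,\cdot)^{2-2n}$ and $|\nabla \mathcal G|\sim c\,d(x_0,\cdot)^{1-2n}$ near $x_0$, together with the continuity of $u$ at $x_0$, the boundary contribution over $\partial B_\epsilon(x_0)$ converges precisely to $u(x_0)=\eta(\infty)$, recovering exactly the distributional formula above; this is entirely standard but is the step where the new integrability condition \eqref{integrability comparison 5} enters decisively, via the finiteness of $u(x_0)$.
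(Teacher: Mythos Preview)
Your proof is correct and follows essentially the same approach as the paper's. The only cosmetic difference is that the paper uses the decreasing primitive $\xi(t)=\int_t^\infty \frac{ds}{sE(s)}$ (so that $\xi(\widehat{\mathcal G})(x_0)=0$) rather than your increasing $\eta(t)=\int_1^t\frac{ds}{sE(s)}=\xi(1)-\xi(t)$, and phrases the Cauchy--Schwarz step in AM--GM form rather than product form; your added justification of the integration by parts near the pole is a detail the paper omits.
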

\begin{proof}
Define the function
\begin{equation}
\xi(t)=\int_t^\infty\frac{ds}{sE(s)},\quad t\in[1,\infty).
\end{equation}
We extend the definition $\xi(t)=\xi(1)$ for $0\le t\le 1$. By assumption (\ref{integrability comparison 5}), $\xi$ is a bounded and decreasing function so that $\xi(\infty)=0$.

Let $u=\xi\circ\widehat{\mathcal{G}}$. Applying the Green's formula to $u$, we have
\begin{eqnarray*}
0=u(x_0)&=&\fint_X u\omega^n+\int_X G(x_0,y)(-\Delta_{\omega}u)(y)\omega^n(y)\\
&=&\fint_X u\omega^n+\int_X|\nabla_y\mathcal{G}(x_0,y)|^2_{\omega(y)}\cdot\xi'\left(\widehat{\mathcal{G}}(x_0,y)\right)\cdot[\omega^n]\cdot\omega^n.
\end{eqnarray*}
By Cauchy-Schwarz inequality, and applying the previous lemma, we get
\begin{eqnarray*}
\int_X|\nabla \mathcal{G}|\omega^n&\le&
-\int_X|\nabla\mathcal{G}|^2\cdot\xi'\big(\widehat{\mathcal{G}}\big)\cdot[\omega^n]\cdot\omega^n
-\int_X \frac{1}{\xi'\big(\widehat{\mathcal{G}}\big)\cdot[\omega^n]}\omega^n\\
&=&\fint_X u\omega^n+\int_X \frac{\widehat{\mathcal{G}}}{[\omega^n]}\cdot E(\widehat{\mathcal{G}})\omega^n\\
&\le&\xi(1)+C,
\end{eqnarray*}
where we used the previous lemma in the last inequality. The proof is complete.
\end{proof}

\begin{theorem}[Main Theorem]\label{main theorem}
Let $(X,\omega_0)$ be a K\"ahler manifold. Let $\varphi_0$ be a solution to CMA {\rm(\ref{MA: smooth case})} such that $\omega=\theta+\sqrt{-1}\partial\bar{\partial}\varphi_0$ is a K\"ahler metric. Assume that $F\in L^\Phi$ for some $N$-function $\Phi$. If there exists $\Phi_1$ satisfying conditions {\rm(C1)-(C3)} and the associated $E$ satisfies the integrability condition {\rm(\ref{integrability comparison 5})}, then
\begin{equation}
\diam(X,\omega)\le C(\omega_0,\Phi,\Phi_1;A,K,L,n).
\end{equation}
\end{theorem}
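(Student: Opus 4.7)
The plan is to deduce the diameter bound from the uniform $L^1$-bound on the gradient of the Green function provided by Lemma~\ref{Gradient-Green}, via the standard ``oscillation-via-Green'' argument used in \cite{GuPhSoSt24-1,GuPhSt24}. Throughout I write $\omega = \theta + \sqrt{-1}\partial\bar\partial\varphi_0$ and let $G(x,\cdot)$ denote the Green function of $-\Delta_\omega$ relative to the probability measure $\omega^n/[\omega^n]$, normalised so that $\fint_X G(x,\cdot)\,\omega^n = 0$.

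First, I would invoke Lemma~\ref{Gradient-Green}: since the $N$-functions $\Phi,\Phi_1$ satisfy (C1)--(C3) and the associated $E$ satisfies the integrability condition (\ref{integrability comparison 5}), there is a constant $C_0 = C_0(\omega_0,\Phi,\Phi_1;A,K,L,n)$, independent of the base point $x_0 \in X$, such that
\begin{equation*}
\int_X |\nabla_z G(x_0, z)|_\omega \, \omega^n(z) \le C_0.
\end{equation*}

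Second, for any smooth test function $u$ on $X$ with $\fint_X u\,\omega^n = 0$, the Green representation formula combined with a single integration by parts yields
\begin{equation*}
u(y) \;=\; \int_X \nabla_z G(y, z) \cdot \nabla u(z) \, \omega^n(z),\qquad \forall\, y \in X.
\end{equation*}
Here the boundary contribution obtained by excising a small geodesic ball $B_\epsilon(y)$ around the diagonal vanishes as $\epsilon \to 0$ because $G(y,z)$ is comparable to $d_\omega(y,z)^{2-2n}$ near $z=y$, so $\int_{\partial B_\epsilon(y)} G\,\partial_\nu u\,dS = O(\epsilon)$ while $\int_{\partial B_\epsilon(y)} u\,\partial_\nu G\,dS$ converges to $u(y)$ and matches exactly the Dirac contribution produced by $-\Delta G = \delta_y - [\omega^n]^{-1}$. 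Pairing the above identity with Step~1 via H\"older gives
\begin{equation*}
|u(y)| \;\le\; \|\nabla u\|_{L^\infty(X,\omega)}\cdot C_0 \qquad \forall\, y\in X,
\end{equation*}
whence $\osc_X u \le 2 C_0\,\|\nabla u\|_{L^\infty}$. An approximation of an arbitrary $\omega$-Lipschitz function by smooth ones with uniformly controlled Lipschitz constant (e.g.\ by convolution in local charts, or by short-time heat flow in the background metric $\omega_0$ followed by a limiting argument) then extends the same oscillation bound to every Lipschitz $u$.

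Finally, I apply the preceding inequality to $u(y) = d_\omega(x_0, y) - \fint_X d_\omega(x_0, \cdot)\,\omega^n$, which is $1$-Lipschitz with respect to $\omega$. For any $x_0, x_1 \in X$,
\begin{equation*}
d_\omega(x_0, x_1) \;=\; |u(x_1) - u(x_0)| \;\le\; \osc_X u \;\le\; 2 C_0,
\end{equation*}
so that $\diam(X,\omega) \le 2 C_0$, which is the required bound. I expect the main technical obstacle to lie in the rigorous justification of the integration by parts of Step~2: the gradient of $G$ is only known to be in $L^1$ with no pointwise control, and $\omega$ carries no a priori curvature estimate, so the cutoff-and-limit argument around the diagonal must rely exclusively on the $L^1$-integrability from Lemma~\ref{Gradient-Green}. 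The other delicate point is the Lipschitz-preserving smoothing of $d_\omega(x_0,\cdot)$, which is standard but needs care since $d_\omega$ is only merely Lipschitz with respect to a K\"ahler metric whose local behaviour is not under pointwise control.
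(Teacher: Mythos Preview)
Your proposal is correct and follows essentially the same route as the paper: invoke Lemma~\ref{Gradient-Green} for the uniform $L^1$-bound on $|\nabla G|$, then use the Green representation formula on the $1$-Lipschitz function $d_\omega(x_0,\cdot)$ to bound the diameter. The paper applies Green's formula to $d_\omega(x_0,\cdot)$ directly at the two points $x$ and $x_0$ and subtracts, which is exactly your oscillation argument specialised to this function; your extra caution about justifying the integration by parts and the Lipschitz smoothing is not addressed in the paper's proof either, which simply writes down the formula.
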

\begin{proof}
Let $(X,\omega)$ be the K\"ahler metric, $\omega=\theta+\sqrt{-1}\partial\bar{\partial}\varphi$ where $\varphi$ is a solution to CMA. Fix any base point $x_0$. Consider the distance function $d_{\omega}(x_0,x)$. By Green's formula
$$d_{\omega}(x_0,x)=\fint_X d_{\omega}(x_0,y)\omega^n(y)+\int_X\langle \nabla_y\mathcal{G}(x,y),\nabla_yd_{\omega}(x_0,y)\rangle_{\omega(y)}\omega^n(y).$$
At $x=x_0,$ we have $d_{\omega}(x_0,x_0)=0$ and
\begin{eqnarray*}
\fint_X d_{\omega}(x_0,y)\omega^n(y)&=&-\int_X\langle \nabla_y\mathcal{G}(x_0,y),\nabla_yd_{\omega}(x_0,y)_{\omega(y)}\rangle\omega^n(y)
\le\int_X|\nabla_y\mathcal{G}(x_0,y)|\omega^n(y).
\end{eqnarray*}
By gradient estimate for Green's function (\ref{L1-gradient-Green}), for any $x\in X$,
$$d_{\omega}(x_0,x)~\le \int_X|\nabla_y\mathcal{G}(x_0,y)|\omega^n(y)+\int_X|\nabla_y\mathcal{G}(x,y)|\omega^n(y)\le C.$$
The proof is complete.
\end{proof}

\subsection{Local integral bound of Green function and volume of geodesic balls}\label{local volume of geodesic balls}

Let $B_r(x_0)$ be a geodesic ball in $(X,\omega)$. Choose $\eta\in C^\infty(B_r(x_0))$ such that $\eta\ge 0$ and
$$\eta=1\mbox{ in }B_{r/2}(x_0),\qquad |\nabla\eta|\le \frac{4}{r}.$$
For any $z_0\in X\backslash B_r(x_0)$,
$$0=\eta(z_0)=\fint_X\eta\omega^n+\int_X\langle\nabla\eta(x),\nabla\mathcal{G}(z_0,x)\rangle\omega^n(x).$$
On the other hand,
$$1=\eta(x_0)=\fint_X\eta\omega^n+\int_X\langle\nabla\eta(x),\nabla\mathcal{G}(x_0,x)\rangle\omega^n.$$
The two formulas gives
$$1\le\int_X\langle\nabla\eta(x),\nabla\mathcal{G}(x_0,x)\rangle\omega^n-\int_X\langle\nabla\eta(x),\nabla\mathcal{G}(z_0,x)\rangle\omega^n(x)
\le\sup_{z\in X}\frac{8}{r}\int_{B_r(x_0)}|\nabla\mathcal{G}(z,\cdot)|\omega^n.$$
In particular
\begin{equation}\label{Gradient-Green-10}
\frac{r}{8}\le\sup_{z\in X}\int_{B_r(x_0)}|\nabla\mathcal{G}(z,\cdot)|\omega^n.
\end{equation}
In the following we show how to get a uniform control of $\int_{B_r(x_0)} |\nabla\mathcal{G}(z,\cdot)|\omega^n$.

We start with a local estimate of Green function. Extend the definition of $E$ so that $E(t)=E(1)$ when $0\le t\le 1$. Define
\begin{equation}
\Psi(t)=t\cdot E(t),\qquad\forall ~~t\ge 0.
\end{equation}
It is increasing strictly in $t$ and satisfies that $\lim_{t\rightarrow\infty}\Psi(t)/t=\infty$.

\begin{lemma}\label{local Green function estimate}
Assume as before. For any $r>0$ and $x_0\in X$, we have
\begin{equation}
\int_{B_r(x_0)}\mathcal{G}\omega^n\le\frac{C}{(\Psi^*)^{-1}\left(\frac{[\omega^n]}{\vol(B_r(x_0))}\right)},
\end{equation}
where $C=C(\omega_0,\Phi,\Phi_1,E,A,K,L,n)$.
\end{lemma}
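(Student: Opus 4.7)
The plan is to reinterpret the global estimate of Proposition \ref{L1+epsilon-Green} as an Orlicz bound on $\widehat{\mathcal{G}}$ with respect to the probability measure $\omega^n/[\omega^n]$, and then to localize it to $B_r(x_0)$ by a Young inequality together with an optimal choice of dual parameter. The only nontrivial input beyond Proposition \ref{L1+epsilon-Green} is the superlinear growth of $\Psi$, which allows $\Psi^*$ and $(\Psi^*)^{-1}$ to be defined in the relevant range.

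First, since $\widehat{\mathcal{G}}=[\omega^n]\cdot\mathcal{G}$, the identity $\Psi(\widehat{\mathcal{G}})=\widehat{\mathcal{G}}\cdot E(\widehat{\mathcal{G}})=[\omega^n]\cdot\mathcal{G}\cdot E(\widehat{\mathcal{G}})$ combined with Proposition \ref{L1+epsilon-Green} gives
\begin{equation*}
\frac{1}{[\omega^n]}\int_X\Psi(\widehat{\mathcal{G}})\,\omega^n \;\le\; C.
\end{equation*}
By the general principle recalled in Section \ref{Orlicz-space} (if $\int\Psi(|f|)\,d\mu\le c$ for some $c\ge 1$, then $\|f\|_{L^\Psi}\le c$), this yields a uniform Luxemburg-type bound $\|\widehat{\mathcal{G}}\|_{L^\Psi(\omega^n/[\omega^n])}\le C_0$ for some $C_0\ge 1$, where the conjugate $\Psi^*$ is well-defined thanks to $\Psi(t)/t=E(t)\to\infty$.

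Next, for any $t>0$ the Young inequality $s\tau\le\Psi(s)+\Psi^*(\tau)$ applied with $s=\widehat{\mathcal{G}}/C_0$ and $\tau=t$ gives
\begin{equation*}
\frac{t}{C_0}\cdot\widehat{\mathcal{G}} \;\le\; \Psi\!\left(\frac{\widehat{\mathcal{G}}}{C_0}\right)+\Psi^*(t).
\end{equation*}
Integrating this pointwise inequality against $\omega^n$ over $B_r(x_0)$ and using the Orlicz bound to control the first term on the right by $[\omega^n]$, I obtain
\begin{equation*}
\frac{t}{C_0}\int_{B_r(x_0)}\widehat{\mathcal{G}}\,\omega^n \;\le\; [\omega^n]+\Psi^*(t)\cdot\vol(B_r(x_0)).
\end{equation*}

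Finally, I optimize by choosing $t=(\Psi^*)^{-1}\!\bigl([\omega^n]/\vol(B_r(x_0))\bigr)$, which balances the two terms on the right-hand side, giving at most $2[\omega^n]$. Dividing by $[\omega^n]\cdot t/C_0$ and using $\widehat{\mathcal{G}}=[\omega^n]\mathcal{G}$ produces
\begin{equation*}
\int_{B_r(x_0)}\mathcal{G}\,\omega^n \;\le\; \frac{2C_0}{(\Psi^*)^{-1}\!\bigl([\omega^n]/\vol(B_r(x_0))\bigr)},
\end{equation*}
which is the desired estimate. The only real subtlety is the book-keeping about $\Psi$ and $\Psi^*$: one needs to know that $E(t)\to\infty$ (so $\Psi$ is superlinear and $\Psi^*$ is proper) and that $(\Psi^*)^{-1}$ can be evaluated at $[\omega^n]/\vol(B_r(x_0))$, which is guaranteed once $r$ is small enough. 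Both points follow directly from the growth properties of $\Phi_1$ and the definition of $E$ in Section \ref{Refined integral estimate of Green function}; no new analytic ingredient beyond Proposition \ref{L1+epsilon-Green} and a single Young inequality is required.
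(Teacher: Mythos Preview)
Your proof is correct and follows essentially the same route as the paper: both arguments combine the global bound $\fint_X\Psi(\widehat{\mathcal{G}})\,\omega^n\le C$ from Proposition~\ref{L1+epsilon-Green} with a single Young inequality for the pair $(\Psi,\Psi^*)$, then optimize the dual parameter so that $\Psi^*$ of it equals $[\omega^n]/\vol(B_r(x_0))$. The only cosmetic difference is that you pass through the Luxemburg norm $\|\widehat{\mathcal{G}}\|_{L^\Psi}\le C_0$ before applying Young's inequality to $\widehat{\mathcal{G}}/C_0$, whereas the paper applies Young directly to $\widehat{\mathcal{G}}$ and absorbs the constant afterward; this extra step is harmless but unnecessary.
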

\begin{proof}
For any $\epsilon>0$,
\begin{eqnarray*}
\int_{B_r(x_0)}\mathcal{G}\omega^n\le\frac{\epsilon}{[\omega^n]}\cdot\int_{B_r(x_0)}\left(\Psi\left(\widehat{\mathcal{G}}\right)
+\Psi^*\left(\epsilon^{-1}\right)\right)\omega^n
\le\epsilon\left(C+\Psi^*\left(\epsilon^{-1}\right)\cdot\frac{\vol(B_r(x_0))}{[\omega^n]}\right).
\end{eqnarray*}
Choosing $\epsilon>0$ so that $\Psi^*\left(\epsilon^{-1}\right)=\vol(B_r(x_0))^{-1}[\omega^n]$ we get the required estimate.
\end{proof}

Take a smaller increasing positive function $\widetilde{E}(t)$ such that
\begin{equation}\label{choice of-widetildeE}
\widetilde{E}(t)\le E(t),\qquad\lim_{t\rightarrow\infty}\frac{\widetilde{E}(t)}{E(t)}=0,\qquad\int_1^\infty\frac{dt}{t\widetilde{E}(t)}<\infty.
\end{equation}
It is obvious that $\widetilde{E}$ satisfies (\ref{integrability comparison 0}) and (\ref{integrability comparison 1}). The existence of such $\widetilde{E}$ is trivial. We may also assume that
$$\int_1^\infty\frac{dt}{t\widetilde{E}(t)}\le 2\cdot\int_1^\infty\frac{dt}{tE(t)}.$$
Let $\widetilde{\Psi}$ be an increasing function defined on $[0,\infty)$ so that $\widetilde{\Psi}(0)=0$ and
\begin{equation}\label{choice of-widetildePsi}
\widetilde{\Psi}\circ\widetilde{E}=E.
\end{equation}

We refine the estimate in Lemma \ref{Gradient-Green}. Let $\mathcal{G}=\mathcal{G}(z,\cdot)$ for a fixed $z\in X$ and put $\widehat{\mathcal{G}}=[\omega^n]\cdot\mathcal{G}$. Define $\tilde{\xi}(t)=\int_t^\infty\frac{ds}{s\widetilde{E}(s)}$. Then, as in the proof of Lemma \ref{Gradient-Green},
\begin{eqnarray}
\int_{B_r(x_0)}|\nabla\mathcal{G}|\omega^n&\le&\left(-\int_{B_r(x_0)}|\nabla\mathcal{G}|^2
\tilde{\xi}'\left(\widehat{\mathcal{G}}\right)[\omega^n]\omega^n\right)^{1/2}
\left(-\int_{B_r(x_0)} \frac{\omega^n}{\tilde{\xi}'\left(\widehat{\mathcal{G}}\right)[\omega^n]}\right)^{1/2}\nonumber\\
&\le&\tilde{\xi}(1)^{1/2}\cdot\left(\int_{B_r(x_0)} \mathcal{G}\cdot \widetilde{E}\left(\widehat{\mathcal{G}}\right)\omega^n\right)^{1/2}.\label{Gradient-Green-11}
\end{eqnarray}
Using the complementary function $\Psi^*$ we get, for any $\epsilon>0$,
\begin{eqnarray*}
\int_{B_r(x_0)}\mathcal{G}\cdot\widetilde{E}\left(\widehat{\mathcal{G}}\right)\omega^n
&\le&\epsilon\cdot\int_{B_r(x_0)}\mathcal{G}\cdot\left(\widetilde{\Psi}\left(\widetilde{E}\circ\widehat{\mathcal{G}}\right)
+\widetilde{\Psi}^*\left(\epsilon^{-1}\right)\right)\omega^n\\
&\le&\epsilon\cdot\left(\int_X\mathcal{G}\cdot E\left(\widehat{\mathcal{G}}\right)\omega^n+\widetilde{\Psi}^*\left(\epsilon^{-1}\right)\cdot\int_{\vol(B_r(x_0))}\mathcal{G}\omega^n\right)\\
&\le&\epsilon\cdot\left(C+\frac{C\cdot\widetilde{\Psi}^*\left(\epsilon^{-1}\right)}{(\Psi^*)^{-1}\left(\frac{[\omega^n]}{\vol(B_r(x_0))}\right)}\right).
\end{eqnarray*}
Choosing $\epsilon>0$ so that
$$\widetilde{\Psi}^*\left(\epsilon^{-1}\right)=(\Psi^*)^{-1}\left(\frac{[\omega^n]}{\vol(B_r(x_0))}\right),$$
we get
\begin{equation}
\int_{B_r(x_0)}\mathcal{G}\cdot\widetilde{E}\left(\widehat{\mathcal{G}}\right)\omega^n\le \frac{C}{(\Psi^*\circ\widetilde{\Psi}^*)^{-1}\left(\frac{[\omega^n]}{\vol(B_r(x_0))}\right)}.
\end{equation}
Substituting into (\ref{Gradient-Green-11}) and (\ref{Gradient-Green-10}) we finally get
$$\frac{r^2}{64}\le\frac{C}{(\Psi^*\circ\widetilde{\Psi}^*)^{-1}\left(\frac{[\omega^n]}{\vol(B_r(x_0))}\right)}.$$

We conclude with the following proposition.

\begin{proposition}\label{Volume of geodesic balls}
Assume as above. For any $x_0\in X$, the volume of geodesic balls satisfies the lower bound
\begin{equation}\label{volume of geodesic balls}
\frac{\vol(B_r(x_0))}{[\omega^n]}\ge\frac{1}{\Psi^*\circ\widetilde{\Psi}^*\left(\frac{C}{r^2}\right)},\quad \forall ~~r<\frac{1}{2}\diam(X,\omega),
\end{equation}
for some $C=C(\omega_0,\Phi,\Phi_1,E;A,K,L,n)$.
\end{proposition}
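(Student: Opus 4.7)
The plan is to read off the conclusion from the chain of inequalities established in the paragraphs immediately preceding the statement; essentially the only remaining task is to invert the composite Orlicz modulus. First I would reduce the volume lower bound to the gradient estimate (\ref{Gradient-Green-10}), which itself follows from applying the Green's representation formula to a Lipschitz bump $\eta$ supported in $B_r(x_0)$, evaluated both at the center $x_0$ and at an arbitrary exterior point $z_0 \in X\setminus B_r(x_0)$, and subtracting. This yields $r/8 \le \sup_{z \in X}\int_{B_r(x_0)} |\nabla \mathcal{G}(z,\cdot)|\,\omega^n$, so everything reduces to a uniform upper bound for this gradient integral in terms of $\vol(B_r(x_0))$ and $r$.

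Next I would control the gradient integral by a Green-weighted integral through the Cauchy-Schwarz computation (\ref{Gradient-Green-11}) based on the cut-off $\tilde{\xi}(t)=\int_t^\infty \frac{ds}{s\widetilde{E}(s)}$, with $\widetilde{E}$ chosen to satisfy (\ref{choice of-widetildeE}) (its existence uses (\ref{integrability comparison 5}) via a routine diagonalization, for instance taking $\widetilde{E}(t)$ comparable to $E(t)/\lambda(t)$ for a suitably slowly growing $\lambda$) and $\widetilde{\Psi}$ defined by $\widetilde{\Psi}\circ\widetilde{E}=E$. The Green's representation of $u=\tilde{\xi}\circ\widehat{\mathcal{G}}$ at $z$ gives
$$\int_{B_r(x_0)} |\nabla \mathcal{G}|\,\omega^n \le \tilde{\xi}(1)^{1/2}\left(\int_{B_r(x_0)} \mathcal{G}\cdot \widetilde{E}(\widehat{\mathcal{G}})\,\omega^n\right)^{1/2}.$$

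I would then split the Green-weighted integral on the right using Young's inequality for the Orlicz pair $(\widetilde{\Psi},\widetilde{\Psi}^*)$: the contribution from $\widetilde{\Psi}(\widetilde{E}(\widehat{\mathcal{G}}))=E(\widehat{\mathcal{G}})$ produces the globally defined integral $\int_X \mathcal{G}\cdot E(\widehat{\mathcal{G}})\omega^n$, uniformly bounded by Proposition \ref{L1+epsilon-Green}, while the $\widetilde{\Psi}^*(\epsilon^{-1})$ term multiplies the local integral $\int_{B_r(x_0)}\mathcal{G}\,\omega^n$, which Lemma \ref{local Green function estimate} controls by a decreasing function of $[\omega^n]/\vol(B_r(x_0))$. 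Optimizing by setting $\widetilde{\Psi}^*(\epsilon^{-1})=(\Psi^*)^{-1}([\omega^n]/\vol(B_r(x_0)))$ equalizes the two pieces and produces
$$\frac{r^2}{64}\le \frac{C}{(\Psi^*\circ\widetilde{\Psi}^*)^{-1}\!\left([\omega^n]/\vol(B_r(x_0))\right)}.$$
Since $\Psi^*\circ\widetilde{\Psi}^*$ is strictly increasing to infinity, inverting this inequality immediately yields (\ref{volume of geodesic balls}).

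The only delicate point in the whole scheme is the nested choice of $E$ and $\widetilde{E}$: $E$ must be large enough for the global Green-weighted integral of Proposition \ref{L1+epsilon-Green} to close, yet $\widetilde{E}$ must be small enough that $\int_1^\infty dt/(t\widetilde{E}(t))<\infty$, while still being large enough for the auxiliary conjugate $\widetilde{\Psi}^*$ to remain proper. This balancing act is precisely what assumption (\ref{integrability comparison 5}) enables, and once $\widetilde{E}$ is in hand, all remaining steps are mechanical combinations of Young's inequality, Cauchy-Schwarz, and the Green's representation formula.
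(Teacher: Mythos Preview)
Your proposal is correct and follows essentially the same argument as the paper: the paper likewise derives (\ref{Gradient-Green-10}) from the bump function $\eta$, applies the Cauchy--Schwarz step with $\tilde{\xi}$ built from $\widetilde{E}$, splits the resulting weighted Green integral via Young's inequality for $(\widetilde{\Psi},\widetilde{\Psi}^*)$ using Proposition~\ref{L1+epsilon-Green} and Lemma~\ref{local Green function estimate}, optimizes $\epsilon$ exactly as you describe, and then inverts. The only cosmetic difference is that the paper records these steps as a running computation in Section~\ref{local volume of geodesic balls} rather than packaging them as a separate proof.
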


\subsection{Slow growth $N$-functions}\label{SG-N-function-diam-bound}

We check the assumptions in the previous subsection for slow growth $N$-functions. Let $\Phi$ be an $N$-function such that
\begin{equation}
\Phi(t)\ge g_0(t)\cdot g_1(t)^{p_1}\cdots g_k(t)^{p_k},
\end{equation}
for any sufficiently large $t$, where $k\ge 1$. The function $g_0(t)=t$ and $g_{i+1}(t)=\log(1+g_i(t))$ inductively for $i\ge 0$. Assume that each $p_i\ge n$ and $p_k>n$ so that $\Phi$ satisfies the first integrability condition (\ref{integrability 0}). See the discuss in Section 5.

Let $\Phi_1$ be a smaller $N$-function of the form
\begin{equation}
\Phi_1=g_0\cdot g_1^n\cdots g_\ell^n\cdot g_{\ell+1}^{q},
\end{equation}
for some $q>n$. In application we shall take a larger $\ell$, say $\ell\ge k+2$.

\begin{lemma}
There is $L>0$ so that both $\Phi$ and $\Phi_1$ satisfy the growth condition {\rm(\ref{growth condition for Phi 3})}.
\end{lemma}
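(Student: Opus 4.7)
Both $\Phi$ and $\Phi_1$ are (for sufficiently large $t$) products of the form $t \cdot \prod_{i=1}^{m} g_i(t)^{q_i}$ for appropriate nonnegative exponents $q_i$. Equivalently, $\Phi(t)/t$ is of the form $\prod_i g_i(t)^{q_i}$ asymptotically, and the same for $\Phi_1$. Since the growth condition (\ref{growth condition for Phi 3}) is formulated in terms of $\Phi(t)/t$, I would reduce matters to verifying
\begin{equation*}
\prod_{i=1}^{m} g_i(st)^{q_i} \,\le\, L\cdot\Bigl(\prod_{i=1}^{m} g_i(s)^{q_i} + \prod_{i=1}^{m} g_i(t)^{q_i}\Bigr), \qquad \forall~ s,t\ge 1,
\end{equation*}
and then handling the finite/non-asymptotic regime by enlarging the constant $L$.

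\textbf{Step 1: submultiplicative estimate for iterated logarithms.} I would first prove, by induction on $i\ge 1$, that there exists $C_i>0$ such that
\begin{equation*}
g_i(st)\le C_i\bigl(g_i(s)+g_i(t)\bigr),\qquad \forall~s,t\ge 1.
\end{equation*}
The base case $i=1$ is immediate from $\log(1+st)\le\log(1+s)+\log(1+t)$ for $s,t\ge 1$ (via $1+st\le (1+s)(1+t)$), giving $C_1=1$. For the inductive step, assume WLOG $s\le t$ so $g_i(s)\le g_i(t)$; then $g_i(st)\le 2C_i g_i(t)$, and
\begin{equation*}
g_{i+1}(st)=\log(1+g_i(st))\le \log\bigl(2C_i(1+g_i(t))\bigr)\le \log(2C_i)+g_{i+1}(t),
\end{equation*}
which is $\le C_{i+1}(g_{i+1}(s)+g_{i+1}(t))$ since $g_{i+1}(s),g_{i+1}(t)\ge g_{i+1}(1)>0$.

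\textbf{Step 2: the product estimate.} Combining Step 1 with the elementary inequality $(a+b)^q\le 2^{q-1}(a^q+b^q)$ for each $q\ge 1$, I get
\begin{equation*}
g_i(st)^{q_i}\le C_i^{q_i}\,2^{q_i-1}\bigl(g_i(s)^{q_i}+g_i(t)^{q_i}\bigr).
\end{equation*}
Then assuming WLOG $s\le t$, we have $g_i(s)\le g_i(t)$, hence each of the $2^m$ terms in the expansion of $\prod_i (g_i(s)^{q_i}+g_i(t)^{q_i})$ is bounded by $\prod_i g_i(t)^{q_i}$, giving
\begin{equation*}
\prod_{i=1}^{m}\bigl(g_i(s)^{q_i}+g_i(t)^{q_i}\bigr)\le 2^m\prod_{i=1}^{m} g_i(t)^{q_i}.
\end{equation*}
Absorbing all constants into a single $L_0$ yields $\prod_i g_i(st)^{q_i}\le L_0\prod_i g_i(t)^{q_i}\le L_0\bigl(\prod_i g_i(s)^{q_i}+\prod_i g_i(t)^{q_i}\bigr)$.

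\textbf{Step 3: passing from asymptotics to all $s,t\ge 1$.} Since $\Phi$ and $\Phi_1$ need only agree with their asymptotic forms for large $t$, there is a threshold $T_0$ beyond which $\Phi(t)/t$ and $\Phi_1(t)/t$ are comparable to $\prod_i g_i(t)^{q_i}$. On the compact region $\{(s,t)\in[1,T_0]^2\}$ the function $\Phi(st)/(st)$ is bounded, while $\Phi(1)/1>0$ gives a uniform positive lower bound on the right-hand side; so the inequality holds there after enlarging $L$. When exactly one of $s,t$ exceeds $T_0$, the same compactness/monotonicity argument applies. Taking $L$ to be the maximum of the constants obtained in Step 2 and from this compactness argument (applied separately to $\Phi$ and $\Phi_1$) gives the claim.

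\textbf{Main obstacle.} The principal technical point is the inductive estimate of Step 1, together with ensuring that all the additive losses incurred in passing from $g_i(st)\le C_i(g_i(s)+g_i(t))$ up to the product $\prod g_i^{q_i}$ can be absorbed into a single constant $L$ uniformly for $s,t\ge 1$; the ``symmetrization'' trick of assuming $s\le t$ is what lets the mixed terms in the product expansion be controlled by $\prod g_i(t)^{q_i}$ alone.
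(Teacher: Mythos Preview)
Your proposal is correct and follows essentially the same approach as the paper: both use the symmetry assumption (WLOG $s\le t$), prove inductively that $g_i(st)$ is controlled by $g_i$ of the larger variable, and then multiply these bounds together. The paper is slightly more direct, arguing $g_i(t_1t_2)\le 4g_i(t_1)$ in one step for large $t_1\ge t_2$, whereas you pass through the additive bound $g_i(st)\le C_i(g_i(s)+g_i(t))$ before specializing, and you are more explicit about the compactness argument for the non-asymptotic regime; these are cosmetic differences only.
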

\begin{proof}
We only check (\ref{growth condition for N-function: 1}) for $\Phi_1$. It suffices to check that for large $t_1,t_2$. By symmetry we may assume that $t_1\ge t_2$ are large numbers, then
$$g_1(t_1t_2)\le 2\log (t_1t_2)\le 4\log t_1\le 4g_1(t_1),$$
and by induction one can show that $g_i(t_1t_2)\le 4g_i(t_1)$ for any $1\le i\le \ell+1$. Thus,
$$\Phi_1(t_1t_2)\le 4^{q_1+\cdots+ q_{\ell+1}}t_1t_2 g_1(t_1)^{q_1}\cdots g_\ell(t_1)^{q_\ell}g_{\ell+1}(t_1)^{q_{\ell+1}}=4^{q_1+\cdots+ q_{\ell+1}}\Phi_1(t_1)t_2.$$
So one can choose $L=4^{q_1+\cdots+ q_{\ell+1}}$ for large $t_1$ and $t_2$.
\end{proof}

Let $\Phi_2$ be the function determined by (\ref{integrability comparison 1}), namely,
$$\Phi_2\left(\frac{\Phi_1(t)}{t}\right)=\frac{\Phi(t)}{t}.$$
It can be solved out that
\begin{equation}
\Phi_2(t) \sim g_0(t)^{\frac{p_1}{n}}g_1(t)^{p_2-p_1}\cdots g_{k-1}(t)^{p_k-p_1}g_k(t)^{-n}\cdots g_{\ell}(t)^{-\frac{qp_1}{n}}.
\end{equation}
See Section 5 for the meaning of the symbol $\sim$.

\subsubsection{$k=1$ and $p>n$.}

In this case $\Phi=g_0g_1^p$, $\ell\ge 2$ and
\begin{equation*}
\Phi_2(t) \sim t^{\frac{p}{n}}g_1(t)^{-p}\cdots g_{\ell-1}(t)^{-p}g_\ell(t)^{-\frac{pq}{n}},
\end{equation*}
and
\begin{equation*}
\Phi_2^*(t) \sim t^{\frac{p}{p-n}}g_1(t)^{\frac{pn}{p-n}}\cdots g_{\ell-1}(t)^{\frac{pn}{p-n}}g_\ell(t)^{\frac{pq}{p-n}}.
\end{equation*}
The $E_1$ and $E_2$ in (\ref{integrability comparison 2})-(\ref{integrability comparison 4}) satisfy
\begin{equation*}
E_1(t) \sim t^{\frac{1}{n-1}}g_1(t)^{-\frac{n}{n-1}}\cdot g_2(t)^{-\frac{n}{n-1}}\cdots g_{\ell}(t)^{-\frac{n}{n-1}}\cdot g_{\ell+1}(t)^{-\frac{q}{n-1}},
\end{equation*}
and
\begin{equation*}
E_2(t) \sim t^{\frac{p-n}{pn-p+n}}g_1(t)^{-\frac{pn}{pn-p+n}}\cdots g_{\ell-1}(t)^{-\frac{pn}{pn-p+n}}g_\ell(t)^{-\frac{pq}{pn-p+n}}.
\end{equation*}
Noticing that the orders of $t$ in $E_1$ and $E_2$ satisfies $\frac{1}{n-1}>\frac{p-n}{pn-p+n}$, we have
\begin{equation}
E(t)=\min(E_1(t), E_2(t)) \sim E_2(t).
\end{equation}
The integrability condition (\ref{integrability comparison 5}) is fulfilled trivially. The refined Green function estimate (\ref{L1+-Green-function}) reads
\begin{equation}
\fint_X\widehat{\mathcal{G}}^{\frac{pn}{pn-p+n}}\cdot g_1\left(\widehat{\mathcal{G}}\right)^{-\frac{pn}{pn-p+n}}\cdots g_{\ell-1}\left(\widehat{\mathcal{G}}\right)^{-\frac{pn}{pn-p+n}}\cdot g_\ell\left(\widehat{\mathcal{G}}\right)^{-\frac{pq}{pn-p+n}} \omega^n\le C.
\end{equation}
It is a precise refinement of the integral Green estimate in \cite{GuPhSoSt24-1,GuPhSt24,Liu24+,ZwZy25+} in Orlicz space $L^\Phi$ with $\Phi=g_0g_1^p$, $p>n$.

The integrability condition (\ref{integrability comparison 5}) is trivial in this case. We next check the volume non-collapsing estimate.

First of all,
\begin{equation*}
\Psi(t)=tE(t)\sim t^{\frac{pn}{pn-p+n}}g_1(t)^{-\frac{pn}{pn-p+n}}\cdots g_{\ell-1}(t)^{-\frac{pn}{pn-p+n}}g_\ell(t)^{-\frac{pq}{pn-p+n}},
\end{equation*}
and
\begin{equation*}
\Psi^*(t) \sim t^{\frac{pn}{p-n}}g_1(t)^{\frac{pn}{p-n}}\cdots g_{\ell-1}(t)^{\frac{pn}{p-n}}g_\ell(t)^{\frac{pq}{p-n}}.
\end{equation*}
Then, taking $\widetilde{E}=g_1\cdots g_\ell g_{\ell+1}^{q'}$ for some $q'>2$ so that $\int\frac{dt}{t\widetilde{E}(t)}$ is integrable. Then,
\begin{equation*}
\widetilde{\Psi}(t)=E\circ\widetilde{E}^{-1}(t) \sim \exp\left\{tg_1(t)^{-1}\cdots g_{\ell-1}^{-1}g_\ell^{-q'}\right\},
\end{equation*}
and
\begin{equation*}
\widetilde{\Psi}^*(t) \sim t\widetilde{E}(t)=tg_1(t)\cdots g_\ell(t)g_{\ell+1}(t)^{q'}.
\end{equation*}
The composition
\begin{equation*}
\Psi^*\circ\widetilde{\Psi}^*(t) \sim t^{\frac{pn}{p-n}}g_1(t)^{\frac{2pn}{p-n}}\cdots g_{\ell-1}(t)^{\frac{2pn}{p-n}}g_{\ell}(t)^{\frac{p(n+q)}{p-n}}g_{\ell+1}(t)^{\frac{pqq'}{p-n}}.
\end{equation*}
So the volume non-collapsing estimate in (\ref{volume of geodesic balls}) reads
\begin{equation}\label{almost-sharp-volume-estimate-1}
\frac{\vol(B_r(x_0))}{[\omega^n]}\ge \frac{C\cdot r^{\frac{2pn}{p-n}}}{g_1(\frac{1}{r^2})^{\frac{2pn}{p-n}}\cdots g_{\ell-1}(\frac{1}{r^2})^{\frac{2pn}{p-n}}g_{\ell}(\frac{1}{r^2})^{\frac{p(n+q)}{p-n}}g_{\ell+1}(\frac{1}{r^2})^{\frac{pqq'}{p-n}}}.
\end{equation}
where $C=C(\omega_0,\Phi,\Phi_1,E; A,K,L,K,n)$.
This is exactly the volume estimate proved in \cite{ZwZy25+} (see also \cite{GuPhSt24,GuPhSoSt24-1,GuPhSoSt23,Liu24+}).

\subsubsection{$k\ge 2$ and $p_1=n$ and $p_i\ge n$ for each $i$ and $p_{i_0}>n$ for some $i_0$.}

Let $2\le j_0\le k$ be the first $j$ such that $p_{j_0}>n$. Then we have
$$\Phi_2(t) \sim g_0(t)g_{j_0}(t)^{p_{j_0}-n}\cdots g_{k-1}(t)^{p_k-n}g_k(t)^{-n}\cdots g_{\ell}(t)^{-q}.$$

{\bf Subcase 2.1: $k=2$ and $p_1=n$, $p_2=p>n$.} In this case we have $\ell\ge 3$ and
$$\Phi_2(t) \sim t g_1(t)^{p-n}g_2(t)^{-n}\cdots g_{\ell-1}(t)^{-n}g_\ell(t)^{-q},$$
and
$$\Phi_2^*(t) \sim \exp\left\{t^{\frac{1}{p-n}}g_1(t)^{\frac{n}{p-n}}\cdots g_{\ell-2}(t)^{\frac{n}{p-n}}\cdot g_{\ell-1}(t)^{\frac{q}{p-n}}\right\}.$$
Then,
$$E_1(t)\sim t^{\frac{1}{n-1}}g_1(t)^{-\frac{n}{n-1}}\cdot g_2(t)^{-\frac{n}{n-1}}\cdots g_{\ell}(t)^{-\frac{n}{n-1}}\cdot g_{\ell+1}(t)^{-\frac{q}{n-1}},$$
and,
$$E_2(t) \sim g_1(t)^{\frac{p-n}{n}}\cdot g_2(t)^{-1}\cdots g_{\ell-1}(t)^{-1}\cdot g_\ell(t)^{-\frac{q}{n}}.$$
So,
\begin{equation}
E(t)\sim E_2(t) \sim g_1(t)^{\frac{p-n}{n}}\cdot g_2(t)^{-1}\cdots g_{\ell-1}(t)^{-1}\cdot g_\ell(t)^{-\frac{q}{n}}.
\end{equation}
When $\Phi=g_0g_1^ng_2^p$ for some $p>n$, the Green function estimate (\ref{L1+-Green-function}) reads
\begin{equation}
\fint_X\widehat{\mathcal{G}}\cdot g_1\left(\widehat{\mathcal{G}}\right)^{\frac{p-n}{n}}\cdot g_2\left(\widehat{\mathcal{G}}\right)^{-1}\cdots g_{\ell-1}\left(\widehat{\mathcal{G}}\right)^{-1}\cdot g_\ell\left(\widehat{\mathcal{G}}\right)^{-\frac{q}{n}} \omega^n\le C.
\end{equation}

The integrability condition (\ref{integrability comparison 5}) remains true in this case if $p>2n$.

We next check the volume non-collapsing estimate. By definition,
\begin{equation*}
\Psi(t)=tE(t)\sim tg_1(t)^{\frac{p-n}{n}}\cdot g_2(t)^{-1}\cdots g_{\ell-1}(t)^{-1}\cdot g_\ell(t)^{-\frac{q}{n}},
\end{equation*}
and
\begin{equation*}
\Psi^*(t) \sim \exp\left\{t^{\frac{n}{p-n}}g_1(t)^{\frac{n}{p-n}}\cdots g_{\ell-2}(t)^{\frac{n}{p-n}}g_{\ell-1}(t)^{\frac{q}{p-n}}\right\}.
\end{equation*}
Then, taking $\widetilde{E}=g_1\cdots g_{\ell} g_{\ell+1}^{q'}$ for some $q'>1$ so that $\int\frac{dt}{t\widetilde{E}(t)}$ is integrable. Assume $p>2n$, then,
\begin{equation*}
\widetilde{\Psi}(t)=E\circ\widetilde{E}^{-1}(t) \sim t^{\frac{p-n}{n}}g_1(t)^{-\frac{p}{n}}\cdots g_{\ell-2}(t)^{-\frac{p}{n}}g_{\ell-1}(t)^{-\frac{p+q-n}{n}}g_\ell(t)^{-\frac{(p-n)q'}{n}},
\end{equation*}
and
\begin{equation*}
\widetilde{\Psi}^*(t) \sim t^{\frac{p-n}{p-2n}}g_1(t)^{\frac{p}{p-2n}}\cdots g_{\ell-2}(t)^{\frac{p}{p-2n}}g_{\ell-1}(t)^{\frac{p+q-n}{p-2n}}g_\ell(t)^{\frac{(p-n)q'}{p-2n}}.
\end{equation*}
The composition
\begin{equation}\label{almost-sharp-volume-estimate: 0}
\Psi^*\circ\widetilde{\Psi}^*(t) \sim \exp\left\{t^{\frac{n}{p-2n}}g_1(t)^{\frac{2n}{p-2n}}\cdots g_{\ell-2}(t)^{\frac{2n}{p-2n}}g_{\ell-1}(t)^{\frac{n+q}{p-2n}}g_\ell(t)^{\frac{nq'}{p-2n}}\right\}.
\end{equation}
Using (\ref{volume of geodesic balls}) gives the volume non-collapsing estimate.

{\bf Subcase 2.2: $k\ge 3$ and $p_1=\cdots =p_{k-1}=n$, $p_k=p>n$.} We have
\begin{equation*}
\Phi_2(t) \sim t\cdot g_{k-1}(t)^{p-n}\cdot g_k(t)^{-n}\cdots g_{\ell-1}(t)^{-n}\cdot g_\ell(t)^{-q},
\end{equation*}
and
\begin{equation*}
\Phi_2^*(t) \sim G_{k-1}\left(t^{\frac{1}{p-n}}\cdot g_1(t)^{\frac{n}{p-n}}\cdots g_{\ell-k}(t)^{\frac{n}{p-n}}\cdot g_{\ell-k+1}(t)^{\frac{q}{p-n}}\right),
\end{equation*}
where $G_{k-1}$ is the inverse function of $g_{k-1}$. The $E_1$ and $E_2$ in (\ref{integrability comparison 2})-(\ref{integrability comparison 4}) satisfy
\begin{equation*}
E_1(t) \sim t^{\frac{1}{n-1}}g_1(t)^{-\frac{n}{n-1}}\cdots g_{k-1}(t)^{-\frac{n}{n-1}}\cdot g_k(t)^{-\frac{q}{n-1}},
\end{equation*}
and
\begin{equation*}
E_2(t) \sim  g_{k-1}(t)^{\frac{p-n}{n}}\cdot g_k(t)^{-1}\cdots g_{k+\ell-3}(t)^{-1}\cdot g_{k+\ell-2}(t)^{-\frac{q}{n}}.
\end{equation*}
Similarly,
\begin{equation}
E(t)\sim E_2(t) \sim  g_{k-1}(t)^{\frac{p-n}{n}}\cdot g_k(t)^{-1}\cdots g_{k+\ell-3}(t)^{-1}\cdot g_{k+\ell-2}(t)^{-\frac{q}{n}}.
\end{equation}
When $\Phi=g_0g_1^n\cdots g_{k-1}^ng_2^p$ for some $p>n$, the Green function estimate (\ref{L1+-Green-function}) reads
\begin{equation}
\fint_X\widehat{\mathcal{G}}\cdot g_{k-1}\left(\widehat{\mathcal{G}}\right)^{\frac{p-n}{n}}\cdot g_k\left(\widehat{\mathcal{G}}\right)^{-1}\cdots g_{k+\ell-3}\left(\widehat{\mathcal{G}}\right)^{-1}\cdot g_{k+\ell-2}\left(\widehat{\mathcal{G}}\right)^{-\frac{q}{n}} \omega^n\le C.
\end{equation}
The formula coincides with that in the previous case. However, the integrability condition (\ref{integrability comparison 5}) fails in this case.

{\bf Subcase 2.3: $k\ge 3$ and $p_1=\cdots =p_{j_0-1}=n$, $p_{j_0}=p>n$ for some $2\le j_0<k$.} We have
\begin{equation*}
\Phi_2(t) \sim t\cdot g_{j_0-1}(t)^{p_{j_0}-n}\cdots g_{k-1}(t)^{p_k-n}\cdot g_k(t)^{-n}\cdots g_{\ell-1}(t)^{-n}\cdot g_\ell(t)^{-q},
\end{equation*}
and
\begin{equation*}
\Phi_2^*(t) \sim G_{j_0-1}\left(t^{\frac{1}{p_{j_0}-n}}\cdot g_1^{-\frac{p_{j_0+1}-n}{p_{j_0}-n}}\cdots g_{k-j_0}^{-\frac{p_k-n}{p_{j_0}-n}} \cdot g_{k-j_0+1}^{\frac{n}{p_{j_0}-n}}\cdots g_{\ell-j_0}^{\frac{n}{p_{j_0}-n}}\cdot g_{\ell-j_0+1}^{\frac{q}{p_{j_0}-n}}\right),
\end{equation*}
where $g_i=g_i(t)$. The $E_1$ and $E_2$ in (\ref{integrability comparison 2})-(\ref{integrability comparison 4}) satisfy
\begin{equation*}
E_1(t) \sim t^{\frac{1}{n-1}}g_1(t)^{-\frac{n}{n-1}}\cdots g_{k-1}(t)^{-\frac{n}{n-1}}\cdot g_k(t)^{-\frac{q}{n-1}},
\end{equation*}
and
\begin{equation*}
E_2(t) \sim g_{j_0-1}(t)^{\frac{p_{j_0}-n}{n}}\cdot g_{j_0}(t)^{\frac{p_{j_0+1}-n}{n}}\cdots g_{k-1}(t)^{\frac{p_k-n}{n}} \cdot g_{k}(t)^{-1}\cdots g_{\ell-1}(t)^{-1}\cdot g_{\ell}(t)^{-\frac{q}{n}}.
\end{equation*}
Similarly,
\begin{equation}
E(t)\sim E_2(t).
\end{equation}
When $\Phi=g_0g_1^n\cdots g_{j_0-1}^ng_{j_0}^{p_{j_0}}\cdots g_k^{p_k}$ for some $p_{j_0}>n$, $2\le j_0<k$, the Green function estimate (\ref{L1+-Green-function}) reads
\begin{equation}
\fint_X\widehat{\mathcal{G}}\cdot g_{j_0-1}\left(\widehat{\mathcal{G}}\right)^{\frac{p_{j_0}-n}{n}}\cdot g_{j_0}\left(\widehat{\mathcal{G}}\right)^{\frac{p_{j_0+1}-n}{n}}\cdots g_{k-1}\left(\widehat{\mathcal{G}}\right)^{\frac{p_k-n}{n}} \cdot g_{k}\left(\widehat{\mathcal{G}}\right)^{-1}\cdots g_{\ell-1}\left(\widehat{\mathcal{G}}\right)^{-1}\cdot g_{\ell}\left(\widehat{\mathcal{G}}\right)^{-\frac{q}{n}}\omega^n\le C.
\end{equation}

The integrability condition (\ref{integrability comparison 5}) holds in this case if and only if the first $p_i\neq 2n$ for $i\ge 2$ must be strictly bigger than $2n$, namely
\begin{equation}
p_2=\cdots p_{j_0-1}=2n, \quad p_{j_0}>2n,
\end{equation}
for some $2\le j_0\le k$.

We assume that $3\le j_0\le k$. Then,
$$\Phi(t)=tg_1(t)^ng_2(t)^{2n}\cdots g_{j_0-1}(t)^{2n}g_{j_0}(t)^{p_{j_0}}\cdots g_k(t)^{p_k},$$
and,
$$E(t)\sim g_1(t)\cdots g_{j_0-2}(t)g_{j_0-1}(t)^{\frac{p_{j_0}-n}{n}}\cdots g_{k-1}(t)^{\frac{p_k-n}{n}}g_k(t)^{-1}\cdots g_{\ell-1}(t)^{-1}g_{\ell}(t)^{-\frac{q}{n}},$$
\begin{equation*}
\Psi(t)=tE(t)\sim tg_1(t)\cdots g_{j_0-2}(t)g_{j_0-1}(t)^{\frac{p_{j_0}-n}{n}}\cdots g_{k-1}(t)^{\frac{p_k-n}{n}}g_k(t)^{-1}\cdots g_{\ell-1}(t)^{-1}g_{\ell}(t)^{-\frac{q}{n}},
\end{equation*}
and
\begin{equation*}
\Psi^*(t) \sim \exp\left\{tg_1(t)^{-1}\cdots g_{j_0-3}(t)^{-1}g_{j_0-2}(t)^{-\frac{p_{j_0}-n}{n}}\cdots g_{k-2}(t)^{-\frac{p_k-n}{n}}g_{k-1}(t)\cdots g_{\ell-2}(t)g_{\ell-1}(t)^{\frac{q}{n}}\right\}.
\end{equation*}

Take $\widetilde{E}=g_1\cdots g_\ell g_{\ell+1}^{q'}$ for some $q'>1$ so that $\int\frac{dt}{t\widetilde{E}(t)}$ is integrable. Then,
\begin{equation*}
\widetilde{\Psi}(t)=E\circ\widetilde{E}^{-1}(t) \sim  tg_{j_0-2}(t)^{\frac{p_{j_0}-2n}{n}}\cdots g_{k-2}(t)^{\frac{p_{k}-2n}{n}}g_{k-1}(t)^{-2}\cdots g_{\ell-2}(t)^{-2}g_{\ell-1}(t)^{-1-\frac{q}{n}}g_\ell(t)^{-q'},
\end{equation*}
and
\begin{eqnarray*}
\widetilde{\Psi}^*(t) & \sim & G_{j_0-2}\left(t^{\frac{n}{p_{j_0}-2n}}g_1(t)^{-\frac{p_{j_0+1}-2n}{p_{j_0}-2n}}\cdots g_{k-j_0}(t)^{-\frac{p_k-2n}{p_{j_0}-2n}} g_{k-j_0+1}(t)^{\frac{2n}{p_{j_0}-2n}}\cdots \right.\\
&&\hspace{2cm}\left.\cdots g_{\ell-j_0}(t)^{\frac{2n}{p_{j_0}-2n}}
g_{\ell-j_0+1}(t)^{\frac{n+q}{p_{j_0}-2n}}g_{\ell-j_0+2}(t)^{\frac{nq'}{p_{j_0}-2n}}\right).
\end{eqnarray*}
The composition $\Psi^*\circ\widetilde{\Psi}^*(t) \sim  \exp\circ \widetilde{\Psi}^*(t)$, so
\begin{eqnarray}
\nonumber\Psi^*\circ\widetilde{\Psi}^*(t) & \sim & G_{j_0-1}\left(t^{\frac{n}{p_{j_0}-2n}}g_1(t)^{-\frac{p_{j_0+1}-2n}{p_{j_0}-2n}}\cdots g_{k-j_0}(t)^{-\frac{p_k-2n}{p_{j_0}-2n}} g_{k-j_0+1}(t)^{\frac{2n}{p_{j_0}-2n}}\cdots \right.\\
&&\hspace{2cm}\left.\cdots g_{\ell-j_0}(t)^{\frac{2n}{p_{j_0}-2n}}
g_{\ell-j_0+1}(t)^{\frac{n+q}{p_{j_0}-2n}}g_{\ell-j_0+2}(t)^{\frac{nq'}{p_{j_0}-2n}}\right).\label{almost-sharp-volume-estimate: 00}
\end{eqnarray}

If, on the other side, $j_0=2<k$, one can show that $\Psi^*\circ\widetilde{\Psi}^*(t)$ has the same formula as in (\ref{almost-sharp-volume-estimate: 00}). Moreover, the formula (\ref{almost-sharp-volume-estimate: 0}) in Subcase 2.1 is exactly the formula (\ref{almost-sharp-volume-estimate: 00}) in Subcase 2.3 if we take $p_3=\cdots =p_k=0$.

So the volume non-collapsing estimate in (\ref{volume of geodesic balls}) reads
\begin{equation}\label{almost-sharp-volume-estimate-n}
\frac{\vol(B_r(x_0))}{[\omega^n]}\ge \frac{C^{-1}}{G_{k_0-1}\circ f(\frac{C}{r^2})},
\end{equation}
for some constant $C>0$, where $f$ is defined by the (\ref{almost-sharp-volume-estimate: 00}).

We summarize the discussion into the following corollary.

\begin{corollary}
Let $\Phi$ be an $N$-function such that
\begin{equation}
\Phi(t)\succ g_0(t)g_1(t)^ng_2(t)^{2n}\cdots g_{k-1}(t)^{2n}g_k(t)^p
\end{equation}
for some $k\ge 2$ and $p>2n$. Let $\varphi_0$ be a solution to CMA {\rm(\ref{MA: smooth case})} with $F\in L^\Phi$. If $\omega=\theta+\sqrt{-1}\partial\bar{\partial}\varphi_0$ is a K\"ahler metric, then
$$\diam(X,\omega)\le C,$$
and
\begin{equation}\label{almost-sharp-volume-estimate: 010}
\frac{\vol(B_r(x_0))}{[\omega^n]}\ge\frac{C^{-1}}{G_{k-1}\left[\left(\frac{C}{r}\right)^{\frac{2n}{p-2n}}g_{1}\left(\frac{C}{r}\right)^{\frac{2n}{p-2n}} g_2\left(\frac{C}{r}\right)^{\frac{2n}{p-2n}}
g_3\left(\frac{C}{r}\right)^{\frac{n+q}{p-2n}}g_4\left(\frac{C}{r}\right)^{\frac{nq'}{p-2n}}\right]},
\end{equation}
for some $C=C(\omega_0,\Phi;A,K,n)$, where $G_{k-1}$ is the inverse function of $g_{k-1}$.
\end{corollary}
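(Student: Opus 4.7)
The strategy is to reduce the corollary to direct applications of Theorem \ref{main theorem} (for the diameter bound) and Proposition \ref{Volume of geodesic balls} (for the volume bound), choosing a suitable auxiliary $N$-function $\Phi_1$ and then running the iterated-logarithm bookkeeping developed in the preceding subcases.

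I would take $\Phi_1 = g_0 \cdot g_1^n \cdots g_\ell^n \cdot g_{\ell+1}^q$ with $\ell = k+2$ and some fixed $q > n$. Conditions (C1)--(C3) of Theorem \ref{main theorem} are then immediate: (C1) follows because $\Phi$ grows at least like $g_{k-1}(t)^{2n} g_k(t)^p$ while $\Phi_1$ has only the fixed exponent $n$ beyond $g_0$; (C2) is the slow-growth integrability already verified for such $N$-functions in Section \ref{integrability}; (C3) is established in the lemma opening Section \ref{SG-N-function-diam-bound}. Since the present $\Phi$ fits the pattern of Subcase 2.3 with $j_0 = k$ and $p_{j_0} = p > 2n$ (and of Subcase 2.1 when $k=2$, which collapses to the same formula), the bookkeeping already carried out yields
\begin{equation*}
E(t) \sim g_1(t) \cdots g_{k-2}(t) \cdot g_{k-1}(t)^{(p-n)/n} \cdot g_k(t)^{-1} \cdots g_{\ell-1}(t)^{-1} \cdot g_\ell(t)^{-q/n}.
\end{equation*}
The hypothesis $p > 2n$ is precisely what forces $(p-n)/n > 1$, so $tE(t)$ has $g_{k-1}$-exponent strictly greater than $2$; hence the integrability condition (\ref{integrability comparison 5}) holds, and Theorem \ref{main theorem} supplies the uniform diameter bound $\diam(X,\omega) \le C$.

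For the volume estimate I would apply Proposition \ref{Volume of geodesic balls} with $\widetilde{E}(t) = g_1(t) \cdots g_\ell(t) \cdot g_{\ell+1}(t)^{q'}$ for some $q' > 1$, which satisfies (\ref{choice of-widetildeE}). The successive computations of $\Psi = tE$, $\Psi^*$, $\widetilde{\Psi} = E \circ \widetilde{E}^{-1}$ and $\widetilde{\Psi}^*$ are exactly those carried out in Subcase 2.3, culminating in the formula (\ref{almost-sharp-volume-estimate: 00}) with $j_0 = k$. Substituting into (\ref{volume of geodesic balls}), using that each $g_i(C/r^2) \sim g_i(C/r)$ up to constants, and specializing to $\ell = k+2$ produces precisely the lower bound (\ref{almost-sharp-volume-estimate: 010}) stated in the corollary.

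The main obstacle is the careful bookkeeping of iterated logarithm factors through the four operations $\Phi \mapsto \Phi^*$, $E_1$, $E_2$ and composition with $\widetilde{\Psi}^*$. The structural observation that makes everything collapse cleanly is that $E_2$ always dominates $E_1$ in this slow-growth regime (so only $E_2$ contributes to $E$), and that the innermost exponential appearing in $\Phi_2^*$ interacts with $\widetilde{\Psi}^*$ to yield exactly the iterated-logarithm inverse $G_{k-1}$ in the final volume bound, rather than a higher iterate that would otherwise appear from a naive composition.
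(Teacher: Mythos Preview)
Your proposal is correct and follows essentially the same approach as the paper: choose $\Phi_1 = g_0 g_1^n \cdots g_{k+2}^n g_{k+3}^q$ (the paper takes $q = 2n$), verify (C1)--(C3), and then invoke the Main Theorem together with the Subcase~2.3 computation culminating in (\ref{almost-sharp-volume-estimate: 00}) with $j_0 = k$. One small wording slip: since $E = \min(E_1,E_2)$, you mean that $E_2$ is \emph{smaller} than $E_1$ in this regime (not that it ``dominates''), so that $E \sim E_2$; the conclusion you draw is correct.
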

\begin{proof}
Define the function $\Phi_1=g_0(t)\cdot g_1(t)^n\cdots g_{k+2}(t)^n\cdot g_{k+3}(t)^{2n}$ and associated $E$ as above too. Applying Main Theorem \ref{main theorem} then yields the required diameter estimate. Applying (\ref{almost-sharp-volume-estimate-n}) then yields the required volume estimate (\ref{almost-sharp-volume-estimate: 010}).
\end{proof}

\subsection{Polynomial growth $N$-functions}\label{Polynomial growth N-functions for diamter-volume}

Let $\Phi$ be an $N$-function such that
\begin{equation}
\Phi(t)\ge \frac{t^p}{p},
\end{equation}
for any sufficiently large $t$, where $p>1$. Let $\Phi_1=g_0g_1^n\cdots g_\ell^ng_{\ell+1}^q$ be a slow growth function for some $\ell\ge 0$ and $q>n$. By definition (\ref{integrability comparison 1}), we have
\begin{equation*}
\Phi_2(t) \sim \exp\left\{t^{\frac{1}{n}}g_1(t)^{-1}\cdots g_{\ell-1}(t)^{-1}g_\ell(t)^{-\frac{q}{n}}\right\}.
\end{equation*}
It follows that
\begin{equation*}
\Phi_2^*(t) \sim \Phi_1(t)=g_0(t)g_1(t)^n\cdots g_\ell(t)^ng_{\ell+1}(t)^q.
\end{equation*}
See Theorem 3.2 in \cite{KrRu} and the discussion in Section 5.1. By (\ref{integrability comparison 2}) and (\ref{integrability comparison 4}) we have that
\begin{equation*}
E_1(t) \sim E_2(t) \sim t^{\frac{1}{n-1}}g_1(t)^{-\frac{n}{n-1}}\cdots g_\ell(t)^{-\frac{n}{n-1}}g_{\ell+1}(t)^{-\frac{q}{n-1}}.
\end{equation*}
The integrability condition (\ref{integrability comparison 5}) is trivial. Then refined integral estimate of Green function reads
\begin{equation}
\fint_X\widehat{\mathcal{G}}^{\frac{n}{n-1}}\cdot g_1\left(\widehat{\mathcal{G}}\right)^{-\frac{n}{n-1}}\cdots g_\ell\left(\widehat{\mathcal{G}}\right)^{-\frac{n}{n-1}}g_{\ell+1}\left(\widehat{\mathcal{G}}\right)^{-\frac{q}{n-1}}~\omega^n\le C.
\end{equation}
It is a precise refinement of the integral Green estimate in \cite{GuPhSt24,GuPhSoSt23,GuPhSoSt24-1,Liu24+,GuTo25,ZwZy25+} in Orlicz space $L^\Phi$ with $\Phi=t^p/p$, $p>1$.

Then we discuss the functions $\Psi$ and $\widetilde{\Psi}$. First of all,
\begin{equation*}
\Psi(t)=tE(t) \sim t^{\frac{n}{n-1}}g_1(t)^{-\frac{n}{n-1}}\cdots g_\ell(t)^{-\frac{n}{n-1}}g_{\ell+1}(t)^{-\frac{q}{n-1}},
\end{equation*}
and
\begin{equation*}
\Psi^*(t) \sim t^ng_1(t)^n\cdots g_\ell(t)^ng_{\ell+1}(t)^q.
\end{equation*}
Then, taking $\widetilde{E}=g_1\cdots g_\ell g_{\ell+1}^{q'}$ for some $q'>2$ so that $\int\frac{dt}{t\widetilde{E}(t)}$ is integrable. Then,
\begin{equation*}
\widetilde{\Psi}(t)=E\circ\widetilde{E}^{-1}(t) \sim \exp\left\{tg_1(t)^{-1}\cdots g_{\ell-1}^{-1}g_\ell^{-q'}\right\},
\end{equation*}
and
\begin{equation*}
\widetilde{\Psi}^*(t) \sim t\widetilde{E}(t)=tg_1(t)\cdots g_\ell(t)g_{\ell+1}(t)^{q'}.
\end{equation*}
The composition
\begin{equation*}
\Psi^*\circ\widetilde{\Psi}^*(t) \sim t^ng_1(t)^{2n}\cdots g_\ell(t)^{2n}g_{\ell+1}(t)^{nq'+q}.
\end{equation*}
So the volume non-collapsing estimate in (\ref{volume of geodesic balls}) reads
\begin{equation}\label{almost-sharp-volume-estimate}
\frac{\vol(B_r(x_0))}{[\omega^n]}\ge \frac{C\cdot r^{2n}}{g_1\left(\frac{1}{r}\right)^{2n}\cdots g_\ell\left(\frac{1}{r}\right)^{2n}g_{\ell+1}\left(\frac{1}{r}\right)^{nq'+q}},
\end{equation}
where $C=C(\omega_0,\Phi,\Phi_1,E; A,K,L,K,n)$. This is exactly the volume estimate proved in \cite{ZwZy25+} (see also \cite{GuPhSt24,GuPhSoSt23,GuPhSoSt24-1,Liu24+,GuTo25,Vu24+}).






\section{Examples}\label{example section}

In the section we consider several typical examples of $N$-functions. All the functions are smooth and strictly convex.

Let $\Phi$ be an $N$-function and $\Phi^*$ be the complementary. Assume that $\Phi'(t)$ increases strictly in $t$. By definition, $\Phi^*(s)=\sup\{st-\Phi(t)\,|\,t\ge 0\}$. For any $s>0$, the supremum of $st-\Phi(t)$ achieve at a unique point $t^*$, then
\begin{equation}
\Phi'(t^*)=s.
\end{equation}
It follows that
\begin{equation}
\Phi^*(s)=t^*\cdot\Phi'(t^*)-\Phi(t^*).
\end{equation}
One then solves out $t^*=t^*(s)$ from the first identity and substitutes into the second one to get the required function $\Phi^*(s)$.

In the following, we write $f(s)\prec g(s)$ for two functions on $[0,\infty)$ if there exist constants $C_1,C_2>0$ such that
$$f(s)\le C_1\cdot g\left(C_2 t\right),$$
whenever $t$ is sufficiently large. If $f\prec g$ and $g\prec f$, then we write $f\sim g$.

\subsection{Exponential growth function}

\begin{example}\label{Exponential growth function}
Let $a>0$ be a constant. The convex conjugate of
$$\Phi(t)=e^{at}-1-at$$
is
$$\Phi^*(s)=\left(1+\frac{s}{a}\right)\log\left(1+\frac{s}{a}\right)-\frac{s}{a}.$$
Thus, the function $h=h(t)$ is the unique solution to
$$\left(1+\frac{h}{a}\right)\log\left(1+\frac{h}{a}\right)-\frac{h}{a}=e^t.$$
When $t\rightarrow\infty$, the function has an asymptotic
$$h(t)\sim\frac{a}{t}\cdot e^t.$$
The function on the right hand side is convex for large $t$.

Let $\Phi(t)=e^{at}-1-at$ for some $a>0$. Then, $h(t)\ge\tilde{h}(t)=\frac{a}{2t}e^t$ for large $t$. So,
$$(h^*)^{-1}(t)\ge(\tilde{h}^*)^{-1}(t)\sim\frac{t}{\log t}$$
for large $t$. Then, the integration term in (\ref{modulus estimate of CMA: 00}) can be estimated as follows,
\begin{eqnarray*}
\int_{\tau}^\infty\frac{dt}{t^{1/n}\cdot(h^*)^{-1}(t)}\le C\int_{t_0}^\infty\frac{\log t dt}{t^{1+\frac{1}{n}}}
\le C\cdot t_0^{-\frac{1}{n}}\cdot\log t_0.
\end{eqnarray*}
Then (\ref{modulus estimate of CMA: 00}) gives
\begin{eqnarray*}
\psi_\delta\le\varphi_0+\delta\cdot\Phi^*(t_0)+C\cdot t_0^{-\frac{1}{n}}\cdot\log t_0.
\end{eqnarray*}
Choosing $t_0=\delta^{-\frac{n}{1+n}}$ one gets
\begin{equation}\label{stability example: 0}
\psi_\delta-\varphi_0\le C\cdot \hbar(\delta),
\end{equation}
where $$\hbar(\delta)=\delta^{\frac{1}{1+n}}\cdot\left(-\log\delta\right).$$
Notice that the power of $\delta$ on the right hand side is independent of $a>0$.
\end{example}

\subsection{Polynomial growth function}

\begin{example}\label{Example-poly}

If $\Phi(t)=\frac{t^p}{p}$ for some $p> 1$, then
$$\Phi^*(s)=\frac{s^{q}}{q},\hspace{0.5cm}h(t)=q^{\frac{1}{q}}\cdot e^{\frac{t}{q}},$$
where $q$ is the dual index of $p$ in the sense that $\frac{1}{p}+\frac{1}{q}=1$.

Obviously $h$ is convex in $t$. Then, the integration term in (\ref{modulus estimate of CMA: 00}) can be estimated
\begin{eqnarray*}
\int_{t_0}^\infty\frac{dt}{t^{1/n}\cdot(h^*)^{-1}(t)}
\le\int_{t_0}^\infty\frac{h^{-1}(t)dt}{t^{1+\frac{1}{n}}}
\le C\cdot t_0^{-\frac{1}{n}}\cdot\log t_0.
\end{eqnarray*}
Then (\ref{modulus estimate of CMA: 00}) gives
\begin{eqnarray*}
\psi_\delta\le\varphi_0+\delta\cdot\Phi^*(t_0)+C\cdot t_0^{-\frac{1}{n}}\cdot\log t_0.
\end{eqnarray*}
Choosing $t_0=\delta^{-\frac{n}{1+nq}}\cdot(-\log\delta)^{\frac{n}{1+nq}}$ one gets
\begin{equation}\label{stability example: 1}
\psi_\delta\le\varphi_0+C\cdot\hbar(\delta),
\end{equation}
where $$\hbar(\delta)=\delta^{\frac{1}{1+nq}}\cdot\left(-\log\delta\right)^{\frac{nq}{1+nq}}.$$ The estimate is slightly better than \cite[Theorem 2.1]{Ko08}, \cite[Proposition 2.9]{DDGHKZ}. See also \cite[Theorem 3.4]{EyGuZe09}, \cite[Proposition 5.2]{GuZe12} for some related estimate.
\end{example}

\subsection{Slow growth function}\label{slow example}

Define a sequence of functions
$$g_0(t)=t,\,g_1(t)=\log (1+t),\,\cdots,g_{k+1}(t)=\log(1+g_k(t)),\cdots.$$
Given $k\ge 1$ and a sequence of real numbers $p_0,p_1,\cdots,p_{k}$, define
$$\Phi(t)=g_0(t)^{p_0}\cdot g_1(t)^{p_1}\cdots g_k(t)^{p_k}.$$
If $p_0\ge 1$ and the first non-vanishing $p_i$ with $i\ge 1$ is positive, then $\lim_{t\rightarrow\infty}\Phi(t)/t=\infty$, so its complementary $\Phi^*$ is well-defined. If each $p_i\ge 1$, then $\Phi$ is convex, which can be checked by induction.

Let us calculate the complementary $\Phi^*$ if it exists. For any $s>0$, let $t^*$ be the time so that $st^*-\Phi(t^*)=\Phi^*(s)$. Then, by the definition of $\Phi^*$, at $t^*$,
\begin{equation}
s=\Phi'(t^*)=\left(\frac{p_0}{t^*}+\sum_{j=1}^k\frac{p_j}{(1+g_0)\cdots(1+g_{j-1})\cdot g_j}\right)\cdot\Phi(t^*).
\end{equation}
It follows that
\begin{eqnarray}
\Phi^*(s)=\left(p_0-1+\sum_{j=1}^k\frac{p_j\cdot g_0}{(1+g_0)\cdots(1+g_{j-1})\cdot g_j}\right)\cdot\Phi(t^*),
\end{eqnarray}
where $g_j=g_j(t^*)$.

\begin{example}[Case $p_0>1$]
Assume $p_0>1$. When $s\rightarrow\infty$, we have the asymptotics
$$s\sim\frac{p_0}{t^*}\cdot\Phi(t^*)= p_0\cdot g_0^{p_0-1}\cdot g_1^{p_1}\cdots g_k^{p_k},$$
so,
$$t^*\sim \left(\frac{s}{p_0}\right)^{\frac{1}{p_0-1}}\cdot\tilde{g}_1^{-\frac{p_1}{p_0-1}}\cdots\tilde{g}_k^{-\frac{p_k}{p_0-1}}$$
where $\tilde{g}_j=g_j\left(\left(\frac{s}{p_0}\right)^{\frac{1}{p_0-1}}\right)$. Obviously, $\tilde{g}_1\sim \frac{1}{p_0-1}g_1(s)$ and $\tilde{g}_j\sim g_j(s)$ for $j\ge 2$. It follows
\begin{eqnarray}
\Phi^*(s)\sim\Phi(t^*)
\sim s^{\frac{p_0}{p_0-1}}\cdot g_1(s)^{-\frac{p_1}{p_0-1}}\cdots g_k(s)^{-\frac{p_k}{p_0-1}},
\end{eqnarray}
as $s\rightarrow\infty$. It reduces to the previous example when $p_j=0$ for $j\ge 1$.
\end{example}

\begin{example}[Case $p_0=1$]\label{Example-log-p=1}
For any $s>0$, let $t^*$ be the time so that $st^*-\Phi(t^*)=\Phi^*(s)$. Then,
\begin{eqnarray*}
\Phi^*(s)=\left(\sum_{j=1}^k\frac{p_j\cdot g_0}{(1+g_0)\cdots(1+g_{j-1})\cdot g_j}\right)\cdot\Phi(t^*)\sim \frac{\Phi(t^*)}{g_1},
\end{eqnarray*}
where $g_j=g_j(t^*)$. When $s\rightarrow\infty$, we have the asymptotics
$$s\sim\frac{1}{t^*}\cdot\Phi(t^*)=g_1^{p_1}\cdots g_k^{p_k}.$$
Similar calculation gives
$$g_1(t^*)\sim s^{\frac{1}{p_1}}\cdot\hat{g}_1(s)^{-\frac{p_2}{p_1}}\cdots\hat{g}_{k-1}(s)^{-\frac{p_k}{p_1}}$$
and
\begin{eqnarray*}
\Phi^*(s)\sim p_1\cdot s^{1-\frac{1}{p_1}}\cdot\hat{g}_1(s)^{\frac{p_2}{p_1}}\cdots\hat{g}_{k-1}(s)^{\frac{p_k}{p_1}}
\cdot\exp\left\{s^{\frac{1}{p_1}}\cdot\hat{g}_1(s)^{-\frac{p_2}{p_1}}\cdots\hat{g}_{k-1}(s)^{-\frac{p_k}{p_1}}\right\}
\end{eqnarray*}
where $\hat{g}_j(s)=g_j\left(s^{\frac{1}{p_1}}\right)$. Obviously, $\hat{g}_1\sim\frac{1}{p_1}g_1$ and $\hat{g}_j\sim g_j$ for $j\ge 2$. It follows
\begin{eqnarray}\label{Phi68-asymptotic}
\Phi^*(s)\sim p_1^{1-\frac{p_2}{p_1}}\cdot s^{1-\frac{1}{p_1}}\cdot g_1(s)^{\frac{p_2}{p_1}}\cdots g_{k-1}(s)^{\frac{p_k}{p_1}}
\cdot\exp\left\{p_1^{\frac{p_2}{p_1}}\cdot s^{\frac{1}{p_1}}\cdot g_1(s)^{-\frac{p_2}{p_1}}\cdots g_{k-1}(s)^{-\frac{p_k}{p_1}}\right\}.
\end{eqnarray}
\end{example}

\begin{example}[Case $p_0=1$: the function $h$]
Suppose $h(t)=u$, then
$$t=\log\Phi^*(u)\sim p_1^{\frac{p_2}{p_1}}\cdot u^{\frac{1}{p_1}}\cdot g_1(u)^{-\frac{p_2}{p_1}}\cdots g_{k-1}(u)^{-\frac{p_k}{p_1}}.$$
It can be solved out that
\begin{eqnarray}\label{h(t)=Phi(t)/t}
h(t)\sim p_1^{-p_2}\cdot t^{p_1}\cdot g_1(v)^{p_2}\cdots g_{k-1}(v)^{p_k}
\sim t^{p_1}\cdot g_1(t)^{p_2}\cdots g_{k-1}^{p_k}.
\end{eqnarray}
Here we used a variable $v=p_1^{-p_2}\cdot t^{p_1}$. So $h$ is asymptotic to a convex function. From above, we see that
\begin{equation}\label{Phi(t)/t=h(logt)}
h(\log t)\sim \frac{\Phi(t)}{t}.
\end{equation}
\end{example}

\begin{example}[Case $p_0=1$: the function $\hbar$]\label{Example-slow}
Suppose that $\Phi$ has a special form
$$\Phi(t)=g_0(t)\cdot g_1(t)^n\cdot\cdots\cdot g_{k-1}(t)^n\cdot g_k(t)^p,$$
where $p>n$. When $k\ge 2$, by (\ref{Phi(t)/t=h(logt)}) we have
\begin{equation}\label{Example-slow-h(t)}
h(t)\sim \tilde{h}(t)=t^n\cdot g_1(t)^n\cdots g_{k-1}(t)^p
\end{equation}
whenever $t$ is sufficiently large. It is obvious that $\tilde{h}$ is an $N$-function, so
$$(h^*)^{-1}(t)\succ (\tilde{h}^*)^{-1}(t)\ge\frac{t}{\tilde{h}^{-1}(t)}.$$
So, for $t_0>>1$, the integration
\begin{eqnarray*}
\int_{t_0}^\infty\frac{dt}{t^{1/n}\cdot(h^*)^{-1}(t)}&\le&\int_{t_0}^\infty\frac{\tilde{h}^{-1}(t)dt}{t^{1+\frac{1}{n}}}\\
&\le& C\cdot\int_{t_0}^\infty\frac{dt}{t\cdot g_1(t)\cdots\cdot g_{k-1}(t)^{\frac{p}{n}}}\\
&\le& C\cdot g_{k-1}(t_0)^{1-\frac{p}{n}}.
\end{eqnarray*}
Then the estimate (\ref{modulus estimate of CMA: 00}) reads $\psi_\delta\le\varphi_0+\delta\cdot\Phi^*(t_0)+C\cdot g_{k-1}(t_0)^{1-\frac{p}{n}}$. Taking infimum of $\delta\cdot\Phi^*(t_0)+C\cdot g_{k-1}(t_0)^{1-\frac{p}{n}}$ over $t_0$ one gets
\begin{equation}\label{stability example: 2}
\psi_\delta\le\varphi_0+C\cdot g_{k-1}(-\log\delta)^{-\frac{p-n}{n}}.
\end{equation}
When $k=1$ we have $h(t)\succ t^p$. The integration term
\begin{eqnarray*}
\int_{t_0}^\infty\frac{dt}{t^{1/n}\cdot(h^*)^{-1}(t)}\le C\cdot\int_{t_0}^\infty\frac{dt}{t^{1+\frac{1}{n}-\frac{1}{p}}}
\le C\cdot t_0^{-\frac{p-n}{np}}.
\end{eqnarray*}
Then the estimate (\ref{modulus estimate of CMA: 00}) reads $\psi_\delta\le\varphi_0+\delta\cdot\Phi^*(t_0)+C\cdot t_0^{-\frac{p-n}{np}}$, where $\Phi^*(t_0)\sim p\cdot t_0^{1-\frac{1}{p}}\cdot\exp\{t_0^{\frac{1}{p}}\}$. Choosing $$t_0=\left(-\log\delta+\frac{np-2n+p}{n}\log(-\log\delta)\right)^p$$
one gets
\begin{equation}\label{stability example: 3}
\psi_\delta\le\varphi_0+C\cdot (-\log\delta)^{-\frac{p-n}{n}},
\end{equation}
The conclusion coincides with (\ref{stability example: 2}) when $k=1$. It is the case that was considered by Guo-Phong-Tong-Wang \cite{GuPhToWa21}.
\end{example}



\end{document}